\documentclass[12pt]{article}
\usepackage{amsmath}
\usepackage{amsfonts}

\usepackage{graphicx}
\usepackage{color}

\def\epsfig#1{}

\setcounter{MaxMatrixCols}{10}

\newtheorem{theorem}{Theorem}
\newtheorem{corollary}[theorem]{Corollary}
\newtheorem{definition}[theorem]{Definition}
\newtheorem{example}[theorem]{Example}
\newtheorem{lemma}[theorem]{Lemma}
\newtheorem{proposition}[theorem]{Proposition}
\newtheorem{remark}[theorem]{Remark}
\newenvironment{proof}[1][Proof]{\textbf{#1.} }{\ \rule{0.5em}{0.5em}}

\newcommand{\dom}{\mathop{\rm Dom}}

\newcommand{\spt}{\mathop{\rm spt}}

\newcommand{\p}{\partial}

\newcommand{\R}{\mathbf{R}}

\newcommand{\ub}{\underline}
\newcommand{\ts}{\textstyle}
\newcommand{\ds}{\displaystyle}
\newcommand{\cm}{c}
\newcommand{\cmd}{\cm_\delta}
\renewcommand{\rho}{r}

\newcommand{\blt}{{b'_{\theta'}}}
\newcommand{\bet}{{b_{\theta}}}

\newcommand\tus{{\tilde u_\sigma}}
\newcommand\tvs{{\tilde v_\sigma}}

\begin{document}

\author{Alice Erlinger\thanks{Laboratoire Dieudonn\'e, Universit\'e de Nice, 
Parc Valrose 06108 Nice Cedex 2 France 
{\tt alice.erlinger@gmail.com}},$^\ddagger$ 
Robert J. McCann\thanks{Department of Mathematics,
University of Toronto, Toronto Ontario M5S 2E4 Canada,
{\tt mccann@math.toronto.edu}}, 
Xianwen Shi$^\S$, \\ 
Aloysius Siow$^\S$ and Ronald Wolthoff\thanks{
Department of Economics, 
University of Toronto, 
{\tt 
xianwen.shi@utoronto.ca, siow@chass.utoronto.ca, ronald.wolthoff@utoronto.ca}}}


\title{
Academic wages and pyramid schemes: a mathematical model%
\thanks{
RJM 
thanks the University of Nice Sophia-Antipolis,
the Becker-Friedman Institute for Economic Research, and
the Stevanovich Center for Financial Mathematics at the University of Chicago
for their kind hospitality
during various stages of this work.  He acknowledges partial support of his research by
Natural Sciences and Engineering Research Council of Canada Grant 217006-08,
and, during Fall 2013 while he was in residence at the Mathematical Sciences Research 
in Berkeley, California, by the National Science
Foundation under Grant No. 0932078 000.
We are grateful to Gary Becker, Yann Brenier and Rosemonde Lareau-Dussault
for fruitful conversations.
\copyright 2014 by the authors.}}
\date{\today}

\maketitle

\begin{abstract}
This
 paper analyzes a steady state matching model interrelating
the education and labor sectors.  In this model,  a heterogeneous population
of students match with teachers to enhance their cognitive skills.  As
adults, they then choose to become workers, managers, or teachers,
who match in the labor or educational market to earn wages by producing output.
We study the competitive equilibrium which results from the steady
state requirement that the educational process replicate the same
endogenous distribution of cognitive skills among adults in each generation
(assuming the same distribution of student skills).
We show such an equilibrium can be found by solving an
infinite-dimensional linear program and its dual.  We analyze the structure
of our solutions, and give sufficient conditions for them to be unique.
Whether or not the educational matching is positive assortative turns out to 
depend on convexity of the equilibrium wages as a function of ability, suitably parameterized; 
we identity conditions which imply this convexity.
Moreover,
due to the recursive nature of the education market,  it is a priori conceivable
that a pyramid scheme leads to greater and greater discrepancies in the
wages of the most talented teachers at the top of the market. Assuming each
teacher teaches $N$ students,  and contributes a fraction $\theta \in]0,1[$ to their
cognitive skill, we show
a phase transition occurs at $N\theta=1$,  which determines whether or not the wage
gradients of these teachers remain bounded as market size grows,
and make a quantitative prediction
for their asymptotic behaviour in both regimes: $N\theta \ge 1$ and $N\theta<1$.
\end{abstract}


\bigskip




















\section{Introduction}

It is an economic truism that prices are 
determined primarily 
by what the market will bear. For example, executive compensations  in large firms
may appear excessive when measured against average employee wages, but are 
often justified by arguing that they are 
determined competitively by the market. 
To understand what levels
of compensation a large market will or won't bear, it is therefore tempting
to ask questions such as: Can the ratio of the CEO's wages over the average
wage in a firm be expected to tend to infinity or a finite limit, as the
size of the firm grows without bound? The answer to such a question may be
expected to depend on various aspects of the organization of the firm, such
as the number of levels of management separating the CEO from the average
worker, and the number of managers at each level. This organizational
structure may itself be determined by market pressures --- within the
constraints of feasible technology.

In this paper we investigate an analogous question set in the context of the
education market, rather than that of a firm. That is, we investigate how
the wages of the most sought after gurus relate to those of the average
teacher. The education market is special in various ways. It is stratified
into many different levels or streams which interact with each other, with a
range of qualities available in every stream. Moreover, what it produces is
human capital, the value of which is determined by the broader market for
skills of which the education market is itself a small part. Thus there is a
feedback mechanism in the education market, owing to the fact that those
individuals who choose to become teachers participate at least twice in the
market: first as consumers and later as producers, putting to work the
skills previously acquired in this market to generate human capital for the
next generation. It is this feedback mechanism which is responsible for many
of the results we describe; it leads to the formation of an educational
analog for a pyramid scheme, in which teachers at each level of the pyramid
attempt to extract as much as they can from their students future earnings,
in the form of tuition. The question this time is whether or not the large
market limit leads to wages which display singularities at the apex of the
pyramid.

We address this question using a variant of a steady state matching model
introduced by four of us to analyze the coupling of the education and labor
markets \cite{McCannShiSiowWolthoff13p}. We proposed this model not only to
provide a microeconomic foundation which allows to compare and contrast
different sectors, but to examine interdependencies and the different roles
played by communication and cognitive skills in each of them. An unexpected
conclusion was that --- as in much simpler (single stage, single sector)
models \cite{ShapleyShubik72} \cite{GretskyOstroyZame92} 
\cite{ChiapporiMcCannNesheim10}, 
competitive equilibrium matching patterns for a heterogeneous steady state
population can be found as the optimal solution to a planner's problem
taking the form of a linear program; see also \cite{Ekeland10}. 
The questions raised in the present
manuscript will be addressed through a rigorous analysis of the resulting
linear program and its solutions, including criteria for existence,
uniqueness, singularities, and a detailed description of the matching
patterns which can arise. A remarkable feature is that this simple model
leads to the emergence of a hierarchical structure in the education sector,
with fewer and fewer individuals at the top of the market earning higher and
higher wages. A detailed exploration of this structure proves necessary to
resolve the question of under what conditions these wages turn out to
display singularities. 
An analogous hierarchy was explored 
by Becker and Murphy in the context of a 
steady growth model \cite[\S VII]{BeckerMurphy92} 
quite different from ours. 

The education market is also unusual in many ways that our
model does not capture. For example, non-pecuniary considerations are
important for both teachers and students, and schools are often not operated
on a for-profit basis; however, in our model we assume all participants
maximize their expected monetary payoff. In addition, education markets
(tuitions, for example) are heavily regulated, but here we abstract away
all regulation restrictions. The goal of this paper, therefore, is not
to provide a realistic account of how teachers' compensations are determined
in the market, but rather to elucidate a feedback mechanism that is
potentially important in determining wage compensation in education and 
other markets, and to provide a tool to solve matching models that
incorporate this feedback mechanism with the potential to encompass
multi-dimensional individual attributes. 

In the present model, we assume that the communication skills are
homogeneous over the entire population, hence deal with a population having
a single dimension of heterogeneity plus parameters, rather than the
multiple dimensions of heterogeneity in \cite{McCannShiSiowWolthoff13p}.
Hence, the model here can be viewed as a limiting case of multidimensional
models in which the range of heterogeneities becomes narrow in all
dimensions but one. There are two benefits from this simplifying assumption.
First, it greatly simplifies our analysis. Second, the resulting model is a
minimal departure from the classical matching model of one dimension of
heterogeneity. We will show that this small departure actually generates
results very different from the standard one-dimensional models of e.g. 
 Lucas \cite{Lucas78} or Garicano \cite{Garicano00}.

As in \cite{McCannShiSiowWolthoff13p}, we model
communication skills as the number of students a teacher can teach or the
number of workers a manager can manage, which is often referred to as
\textquotedblleft span of control\textquotedblright . In particular, we
assume that each teacher can teach $N>1$ students. We
use $\theta \in ]0,1[$ to represent the extent to which a
teacher's cognitive skills get transmitted to each of their students.
Similarly, $N^{\prime }>0$ and $\theta ^{\prime }\in ]0,1[$ represent the
number of workers each manager can manage in the labor market, and the
extent to which a manager's cognitive skill enhances the productivity of his
or her workers. All market participants have the same $N$
in the education market and the same $N^{\prime }$ in the labor
market, but they differ in the cognitive skills $k$ which are
assumed to be continuously distributed over the interval 
$\bar{K}:=[\underline{k},\bar{k}]\subset \R$. As a result, the linear program is
infinite-dimensional, and the analysis is complicated by a lack of a priori
bounds which could be used to show that equilibrium wages or payoffs exist
for the model. Moreover, a pyramid can form in the education sector,
enhancing the wages of the most skilled teachers. It is not obvious whether
or not this pyramid structure can lead to unbounded wage behavior.
Our analysis suggests it does not,  but leads to unbounded wage gradients instead.

We begin by elucidating a convexity property which allows us to derive the
existence of equilibrium wages as solutions to an (infinite-dimensional)
linear program. This convexity is reminiscent of 
that discussed by Rosen in his investigation of superstars~\cite{Rosen81}. 
More surprisingly, after addressing uniqueness and properties of these wages
and the matches they induce, we go on to show that the model 
exhibits a phase transition, depending on the product 
of each teacher's capacity $N$ for students times their teaching
effectiveness $\theta $: 
the wage gradients diverge at the highest skill type if and only if $N\theta
\geq 1$. 
When $N\theta>1$, the divergence is proportional to $|\bar{k}-k|^{-\frac{\log \theta }{\log N}%
-1}$ as $k\rightarrow \bar{k}$. Only by integrating this divergence can we
conditionally show wages tend to a finite limit at $\bar{k}$ which --- in
the large market limit --- becomes independent of the size of the population
being modelled. 

Although wage singularies for teachers may appear counter-factual,
or at least modest compared to wage singularities for managers in the real world,
this discrepancy between prediction and observation is easily explained by the fact that
our model allows for only one layer of managers but a potentially unbounded number of 
layers of teachers.  Thus a top teacher improves the cognitive skills of each of
their $N$ students who go on to be top teachers or managers. 
A good manager improves the productivity of each of their $N^{\prime }$ 
supervised workers. Thus, already in a two-layer hierarchy, a top
teacher indirectly makes a large number $N\times N^{\prime }$ of workers
more productive. Since the number of layers of the educational hierarchy is 
endogenous to the model and can be very large, the impact of gurus on the productivity
of their direct and indirect students and workers can accumulate very substantially.

The term \emph{phase transition} is borrowed from statistical physics, where
it refers to a sharp threshold in parameters (such as temperature)
separating qualitatively different behavior (such as liquid from solid). In
that context, the non-smoothness 
arises from a continuum limit which admits 
approximation by finite dimensional models depending smoothly on the same
parameter(s). By analogy, if our continuum of agent types could be
approximated using finitely many agent types, we would expect to restore
smooth dependence on the parameters $N$ and $\theta$, but this smoothness
(i.e.\ the wage gradients) would not admit control uniform in the number of
types. In statistical physics, it is often the case that the critical
exponents of the singularities (such as $\frac{\log \theta}{ \log N}$ above)
do not vary over a wide class of models, a phenomenon known as universality. 
In the present context, we observe that the exponent $\frac{\log \theta}{
\log N}$ governing growth of the wage gradients is universal in the sense
that it does not 
depend on various details of the model, such as the exact form of the
production functions, or the input distribution of student skills, at least
within the classes of such data considered hereafter.

The remainder of this manuscript is organized as follows. In the first 
section and subsections we lay out the model, and its variational reformulation in
terms of a planner's problem and its dual. We have argued in \cite%
{McCannShiSiowWolthoff13p} that solutions to these infinite-dimensional
linear programs represent competive equilibria; see also the announcement 
\cite{McCann14p}. In a second section and subsections we address the existence,
uniqueness and properties of these solutions. Even the existence of
equilibrium wages in this model is rather non-trivial, and goes beyond the
range of validity of any statement of the second welfare theorem that we
know. Standard arguments concerning existence of an optimal matching and
absence of a duality gap are relegated to an appendix, which is logically
independent of the rest of the analysis. Lemma \ref{L:adult tail bound} is
also logically independent of the remaining analysis, and its first
assertion is actually required at some earlier points in the text.

\subsection{The model: competitive equilibria}

Let us begin by describing our unidimensional variant of the model
first introduced by \cite{McCannShiSiowWolthoff13p}. Consider an economy
populated by risk-neutral individuals who each lives for two periods.
Individuals, when they are young, enter the education market as students. In
the subsequent period as adults, they enter the labor market to become
teachers in schools, or workers or managers in firms. Both the education
market and the labor market are competitive. There is free entry for both
schools and firms. Hence, the tuition fees a school collects from students
are just enough to cover the wage of its teacher, and a firm's output
exactly covers the wages of its employees (workers and mangers). All
individuals do not discount. The lifetime net payoffs of individuals are
equal to the sum of their labor market plus non-labor market earnings minus tuition costs.
Individuals choose what occupation to pursue and who to match with in each of the two 
markets  to maximize their net payoffs.

Each individual is endowed with two kinds of skills, a
communication skill ($N>1$ or $N^{\prime }>1$) which is
fixed throughout their lifetime, and an initial cognitive skill $a$ 
which can be augmented through education. As in~\cite{McCannShiSiowWolthoff13p}, we
assume that individuals differ in their initial cognitive skills $a$. 
In contrast to \cite{McCannShiSiowWolthoff13p}, we assume that individuals
share the same communication skills. By attending schools in the first
period, individuals can augment their initial cognitive skills $a$ 
to their adult cognitive skill $k$. Let $A=[\underline{a},\bar{a}[$ 
with $-\infty <\underline{a}<\bar{a}<+\infty $ denote
the range of students' initial cognitive skills $a$,
and $K=[\underline{k},\bar{k}[$ or rather its closure $\bar{K}$ 
the range of adult human capital $k$. Ability or human
capital refers to cognitive skill in both cases, and we occasionally
use the variable names $a$ and $k$ interchangeably for convenience. 
For the model discussed
here, taking $K=A$ will not cost any generality, nor will the
normalization $\underline{a}=\underline{k}=0$. 

The production functions in the education market and in the labor
market are described as follows. We assume the cognitive skill $z(a,k)$
acquired by a student of ability $a\in A$ who studies with a teacher of
ability $k\in K$ is given by the weighted average $z(a,k)=(1-\theta
)a+\theta k$ of their abilities, with weight $\theta \in ]0,1[$. We also
assume the productivity 
$b_{L}((1-\theta ^{\prime })a+\theta ^{\prime }k)$ of a worker with adult
cognitive skill $a$ supervised by a manager of skill $k$ is given by a
convex increasing function $b_{L}\in C^{1}\left(\bar{K}\right)$ of another such
average, this time with weight $\theta ^{\prime }\in ]0,1[$. Notice
that abilities $a$ and $k$ here are measured on a
logarithmic scale relative to the conventions of \cite%
{McCannShiSiowWolthoff13p}, a reparameterization which is crucial for
exposing the sense in which the equilibrium wages may turn out to be convex.

We allow for the possibility that cognitive skill $z$ attained through
education has value $cb_{E}(z)$ in addition to the wage earning potential it
confers, where $c\geq 0$ is a dimensionless parameter and $b_{E}\in C^{1}(%
\overline{A})$ is another convex increasing function. The choice $%
b_{E}(k)=e^{k}=b_{L}(k)$ with $\theta =\frac{1}{2}=\theta ^{\prime }$
corresponds to the motivating example from \cite{McCannShiSiowWolthoff13p};
more generally we assume $b_{E}$ and $b_{L}$ and their first two derivatives
have positive lower bounds 
\begin{eqnarray}
\label{utility bound 0}
0<\underline{b}_{E/L} &=&b_{E/L}(0)  \\
\label{utility bound 1}
0<\underline{b}_{E/L}^{\prime } &=&b_{E/L}^{\prime }(0) \\
\label{utility bound 2}
0<\underline{b}_{E/L}^{\prime \prime } &=&\inf_{k}b_{E/L}^{\prime \prime
}(k),
\end{eqnarray}%
where $\underline{b}_{E/L}^{\prime \prime }$ is defined as the largest
constant for which $b_{E/L}(k)-\underline{b}_{E/L}^{\prime \prime }|k|^{2}/2$
is convex on $\bar{K}$. We hope strict positivity of the analogous
quantities will be inherited by the equilibrium payoffs $u$ and $v$. 

Notice that what is being produced in each sector is different: 
in the labor and non-labor sectors we have not specified the service or
goods which are being produced, except that they take adult cognitive skills
as their input (communication skills entering through possible dependence of 
$c$ on parameters such as $N$ and $\theta$); in the education sector it is
adult cognitive skills which are being produced, taking student and teacher
cognitive skills as their inputs.  The dimensionless constant $c \ge 0$
measures the non-labor utility, 
if any, of individual attainment of cognitive skills 
relative to labor productivity; it replaces the marital utility used in
early drafts of \cite{McCannShiSiowWolthoff13p}.

Let a probability measure $\alpha \geq 0$ on $\bar{A}$ represent the
exogenous distribution of student abilities, and let $\mathop{\rm spt}\alpha 
$ denote the smallest closed subset of $\bar{A}$ carrying the full mass of $%
\alpha $. Taking $A$ smaller if necessary ensures $\mathop{\rm spt}\alpha $
contains both $\underline{a}$ and $\bar{a}$. 
Our problem is to find a pair Borel measures $\epsilon \geq 0$ on $\bar{A}%
\times \bar{K}$ and $\lambda \geq 0$ on $\bar{K}\times \bar{K}$, such that $%
\epsilon $ represents the \emph{educational} pairing of students with
teachers, and $\lambda $ represents the \emph{labor} pairing of workers with
managers, along with a pair of payoffs or wage functions $u,v:\bar{K}%
\longrightarrow \lbrack 0,\infty ]$ representing the net lifetime expected
utility $u(a)$ of a student with ability $a$, and the wage $v(k)$ paid to an
adult of ability $k$, which together constitute a competitive equilibrium $%
(\epsilon ,\lambda ,u,v)$. Roughly speaking, this means the matchings $%
\epsilon ,\lambda $ must clear the market at each generation in a
steady-state, and the payoffs $u$ and $v$ must be large enough to be stable,
yet small enough that in combination with $(\epsilon ,\lambda )$ they
satisfy a budget constraint.

Since we are interested in a steady state model,  we assume the distribution of student abilities $\alpha$ on $\bar A$
is the same at each generation,  and coincides with the left marginal
\begin{equation}\label{student marginal}
\epsilon^1 = \alpha
\end{equation}
of the educational pairing $\epsilon \ge 0$ of student and teacher abilities.
Here $\epsilon^{1} = \pi^{1}_\# \epsilon$ and $\epsilon^{2} = \pi^{2}_\#\epsilon$
denote the left and right projections of $\epsilon$ through $\pi^{1}(a,k)=a$
and $\pi^{2}(a,k)=k$, representing the respective distributions of student and teacher abilities.
  Similarly $\lambda^{1}$ and $\lambda^{2}$ will denote
the left and right marginals of the labor pairing $\lambda$, representing the distribution of
worker and manager skills.  The steady state constraint requires that the educational pairing $\epsilon$ of students with adults reproduce the current distribution of adult skills at the next generation:
\begin{equation}\label{steady state}
\lambda^{1} + \frac{1}{N'} \lambda^{2} + \frac1{N} \epsilon^{2} =
z_\#\epsilon,
\end{equation}
where the expression on the left represents the sum of the current
distributions of worker, manager and teacher skills;
the latter have been scaled by $N'$ and $N$ respectively, to reflect the fact that each manager manages $N'$ workers,
and each teacher teaches $N$ students, so comparatively fewer managers and teachers are required.
The symbol $\kappa:=z_\# \epsilon$ on the right represents the distribution of future adult skills 
resulting from the educational pairing $\epsilon$; it is given
by the push-forward of $\epsilon$ through the 
map $z:\bar A \times \bar K \longrightarrow \bar K$ representing the educational technology, and assigns mass
$\kappa[B] := \epsilon[z^{-1}(B)]$ to each set $B \subset \bar K$.

The marginal constraint \eqref{student marginal} forces $\epsilon$ and hence
$\kappa = z_\#\epsilon$ to be probability measures, like $\alpha$.
  The workers form a fraction $(1-\frac1N)/({1+\frac1{N'}})$ of the population, 
coinciding with the total mass of $\lambda$.
The restriction $K=A$ costs no generality,
since we are in a steady state,  and since our education technology
satisfies $z(a,a)=a$, whence $z(\ub a,\ub k) = \ub k$ and
$z(\bar a,\bar k) = \bar k$.

Letting $v(k)$ denote the wage commanded by an 
adult of skill $k$,  and $u(a)$ the net lifetime utility of a student of ability $a$,  both 
must satisfy the stability conditions
\begin{eqnarray}
\label{stability education}
 u(a) + \frac1N v(k) \ \ge&      \cm b_E(z(a,k)) + v(z(a,k)) & {\rm and}\\
v(a) + \frac 1{N'}v(k) \  \ge&  b_L((1-\theta')a+\theta'k)  & {\rm\ on}\ \bar A \times \bar K.
\label{stability labor} 
\end{eqnarray}
The constraint \eqref{stability labor}
enforces stability of matchings
in the labor sector.  If the reverse inequality held,  $N'$ adults with
skills $a$ and one with skill $k$ would 
abandon
their occupations to form $N'$ worker-manager pairs each producing enough output $b_L$  
to improve all $N'+1$ adults' wages.
Similarly 
\eqref{stability education} is a stable matching
condition for the education sector.  The lifetime net utility 
of a student with
cognitive skill $a$ plus the tuition $v(k)/N$ paid by each student of a teacher with skill $k$ must exceed $a$'s lifetime earnings plus any other benefits derived from cognitive skills which would have resulted had he (and $N-1$ of his clones) chosen to study with 
$k$.  We can also regard the stability constraints \eqref{stability education}--\eqref{stability labor} as combining to ensure
each adult of type $k$ in the population chooses the profession (worker, manager, or teacher) and partners (manager, workers, or students, respectively) which maximize their wage $v(k)$ on the labor market.

Finally,  the budget constraint asserts that 
equality holds $\epsilon$-a.e.\ in \eqref{stability education},  and
$\lambda$-a.e.\ in \eqref{stability labor}.
In other words,  the productivity 
$b_L((1-\theta')a + \theta'k)$ of $\lambda$-a.e.\ manager-worker pair $(a,k)$
which actually forms is sufficient to pay the worker's wage 
plus a fraction $1/N'$ of the manager's
salary. Similarly,  $\epsilon$-a.e.\ student-teacher pairing $(a,k)$ which forms must produce an adult whose
earnings $v(z(a,k))$, supplemented by any additional utility $c b_E(z(a,k))$ derived from the skill
$z(a,k)$ he acquires, must add up to the net lifetime utility 
which remains to the student after paying tuition
equal to his share $v(k)/N$ of his teacher's earnings.

To complete the specification of the model, we need to say in what class of functions the payoffs
$u,v$ must lie. 
Since we wish to allow for the possibility that the payoffs $u,v:K \longrightarrow [0,\infty]$
become unbounded at the upper end $\bar k$ of the skill range,  it is convenient to define
$A=K=[0,\bar k[$ as a half open interval.  We shall consider payoffs from the feasible set $F_0$ 
consisting of pairs $(u,v) = (u_0+u_1,v_0+v_1)$ 
satisfying \eqref{stability education}--\eqref{stability labor}
which differ from
bounded continuous functions
$u_0,v_0 \in C(\bar A)$ by non-decreasing functions 
$u_1,v_1:\bar A \longrightarrow [0,\infty]$.
If $v$ takes extended real values, we also require
\begin{equation}\label{stability bound}
\ts \frac{N}{N-1} (u(k) - \cm b_E(k)) \ge v(k) \ge \frac{N'}{N'+1} b_L(k) >0 \qquad {\rm\ on}\ \bar K,
\end{equation}
which otherwise follows from $a=k$ in \eqref{stability education}--\eqref{stability labor}.
We often require $u$ and $v$ to be {\em proper}, meaning lower semicontinuous and not identically infinite.
This costs little generality,  since
when \eqref{stability education}--\eqref{stability bound} hold for non-negative functions $(u,v)$,
they continue to if $u$ and $v$ are replaced by their lower semicontinuous hulls.

A {\em competitive equilibrium} refers to a pair of measures $\epsilon,\lambda \ge 0$ and functions
$(u,v) \in F_0$ satisfying \eqref{student marginal}--\eqref{stability bound}
plus the budget constraint 
\begin{equation}\label{budget constraint}
\mbox{\rm equality holds $\epsilon$-a.e.\ in \eqref{stability education},  and
$\lambda$-a.e.\ in \eqref{stability labor}}
\end{equation}
relating $(\epsilon,\lambda)$ to $(u,v)$.
The economic idea behind this definition is that no
individual agent (nor any group of agents which is small relative to the size of the market)
can improve their outcome by choosing to match otherwise than as prescribed
by $\epsilon$ and $\lambda$. 
Here $\epsilon$ represents
an assignment of $N$ students to each teacher, and reproduces the current distribution
of adult skills in the next generation,
starting from the given distribution $\alpha$ of student skills and
educational technology $z(a,k) = (1-\theta)a + \theta k$;
the future earnings plus any non-labor utility received by the $N$ students exactly add up to
their net lifetime utilities, plus the salary of the teacher.
Similarly, $\lambda$ represents an assignment of $N'$ workers to each manager, the productivity of these worker-manager teams exactly sufficing to
pay the respective wages of each team member.  Both the educational and the labor markets clear, and the stability constraints guarantee no
adult would prefer an occupation other than the one he or she has been assigned,
nor to work with anyone other than the partners prescribed by $(a,k) \in \spt\lambda$
in the case of workers or managers,  or by $\epsilon$ in the case of teachers.
Similarly,  each pair $(a,k) \in \spt \epsilon$ represents a student of ability $a$,
who cannot improve his net lifetime payoff by training with any teacher other than
the one of skill $k$ that he is paired with under $\epsilon$.

\subsection{The planner's problem and its dual}
Shapley and Shubik's basic insight is that stable matching problems 
with transferable utility
have a variational reformulation using linear programs and their duals.
In \cite{McCannShiSiowWolthoff13p} we observe that this insight extends
from the familiar single-stage, single-sector setting of 
\cite{ShapleyShubik72} \cite{GretskyOstroyZame92} and \cite{ChiapporiMcCannNesheim10},
to steady-state multi-sector models such as the one introduced above. 
Denoting our education and labor market technologies by
$\bet(a,k) = b_E((1-\theta)a + \theta k)$ and $\blt(a,k)= b_L((1-\theta')a + \theta' k)$,
the quartuple $(\epsilon,\lambda,u,v)$ forms a competitive equilibrium
if and only if $(u,v)$ attain the infimum
\begin{equation}
LP_* := \inf_{(u,v) \in F_0}
\int_{[0,\bar a]}
u(a) \alpha(da)
\label{three line dual}
\end{equation}
over \eqref{stability education}--\eqref{stability bound}, 
while $(\epsilon,\lambda)$ attain the supremum
\begin{equation}\label{three line primal}
LP^* := \max_{
{\epsilon \ge 0\ {\rm and}\ \lambda \ge 0\ {\rm on}\ [0,\bar a]^2
\atop {\rm satisfying}\ \eqref{student marginal}-\eqref{steady state}}}
\int_{[0,\bar a]\times [0, \bar k]}
\!\!\!\!\!\! 
[c \blt(a,k) \epsilon(da,dk) + \bet(a,k) \lambda(da,dk)].
\end{equation} 
We shall henceforth refer to $(u,v) \in F_0$ as {\em optimal} if it attains
the infimum \eqref{three line dual},  and to $(\epsilon,\lambda)$
as {\em optimal} if it attains the supremum \eqref{three line primal}.
Whereas the notion of competitive equilibrium relates
$(u,v)$ to $(\epsilon,\lambda)$ through \eqref{budget constraint},  
one can discuss {\em optimality} of $(u,v)$ without referring to
$(\epsilon,\lambda)$, and vice-versa.  This is the first of many advantages 
conferred by our Shapley-Shubik-like reformation of the problem at hand.
 
We often use $\alpha(u)$ as a shorthand notation to denote the integral appearing
in \eqref{three line dual}, which represents the average student's net lifetime utility.
Similarly, $c \epsilon(\bet) + \lambda(\blt)$ denotes the
argument appearing in the supremum \eqref{three line primal},  and represents
the total (non-labor + labor) utility produced by the pairings $\epsilon$ and $\lambda$.
Thus if equilibrium wages $(u,v) \in F_0$ exist, they minimize the expected lifetime utility
of students subject to the stability constraints.
Similarly, any equilibrium matches maximize
the utility $c \epsilon(\bet) + \lambda(\blt)$ being produced our model's two
sectors in each generation, subject to the market-clearing constraints 
\eqref{student marginal}--\eqref{steady state} in steady-state.   
The latter can be interpreted as a
social planner's problem;  it is also the linear program dual to \eqref{three line dual}.
Satisfaction of the budget constraint
\eqref{budget constraint} follows from the absence of a duality gap: 
the fact $LP_*=LP^*$, which is established below under the technical hypothesis that $\alpha$
satisfy a doubling condition at the top skill type $\bar a$, meaning there exists
$C<\infty$ such that
\begin{equation}\label{doubling}
\int_{[\bar a-2\Delta a, \bar a]} \alpha(da) \le C\int_{[\bar a - \Delta a,\bar a]} \alpha(da)
\end{equation}
for all $\Delta a>0$.  A surprisingly delicate part of the proof is the inequality
$LP^* \le LP_*$ shown in Proposition \ref{P:easy duality};  the rest of the duality argument
reproduced in Appendix \ref{S:no duality gap} is quite standard.

The variational characterization given by \eqref{three line dual}--\eqref{three line primal}
is our starting point for the further analysis for the payoffs $(u,v)$ and 
matchings $(\epsilon,\lambda)$ we seek. 
To show such competitive equilibria exist,  it is enough to establish the infimum and supremum are attained.  
Attainment of the planner's supremum is standard,  as recalled in 
Appendix \ref{S:no duality gap}.  
It is less straightforward to show that the infimum \eqref{three line dual}
is attained, and to elucidate the properties of the extremizers for either problem.
A continuity and compactness argument is complicated by the fact that the wage function $v$
appears on {\em both} sides of the education sector stability constraint,  and has no obvious upper bound except  
 in $L^1(\bar A,\alpha)$; c.f.\ \eqref{stability bound}.


When minimizers $(u,v)$ exist,  it is useful to know as much structural information
as we can about them,  in order to analyze the properties of the corresponding equilibrium matches.  In the cases for which we have been able to deduce the existence
of minimizers,  they turn out to be non-negative, non-decreasing, {\em convex} functions 
of $a \in [0,\bar a]$.
The fact that the monotonicity and convexity of $u$ and $v$
survive limits is crucial to the analysis.  
Indeed, our existence strategy is
to first show \eqref{three line dual} is minimized under the additional
assumption of convexity and monotonicity for $u$ and $v$,  and then to show
that this additional constraint does not bind for the minimizing $(u,v)$,
which must therefore optimize the original problem of interest.
In the absence of an atom at the top skill type, $\alpha[\{\bar k\}]=0$,
it seems possible a priori that
both $u(k)$ and $v(k)$ diverge to $+\infty$ as $k\to\bar k$,  without violating
boundedness of the expected value $LP_* = \alpha(u)$.  Although
Theorem \ref{T:phase transition} 
tends to rule out this possibility,
giving conditions instead for the gradients $u'(a)$ and $v'(k)$ to diverge,  for the intermediate analysis it is useful to let $A=[0,\bar k[=K$
denote a half-open interval where we can assume $u$ and $v$ are real valued.

In addition to $(N,\theta)$ and $(N',\theta')$, dimensionless parameters
such as  $\bar b'_L/ \ub b'_L \ge 1$ and 
$\cm \ge 0$ govern the behavior displayed by the model. 
Here $\ub b_L'$ is from \eqref{utility bound 1} and
\begin{eqnarray*}
\bar b'_{E/L} = b'_{E/L}(\bar k) = \sup_{k \in K} b'_{E/L}(k) 
\end{eqnarray*} 
so $\bar b'_L/\ub b'_L$ indexes the relative impact of an increase in skill on labor productivity at the top versus the bottom of the skills market,  while $c$ measures the
relative importance of any other satisfactions derived from cognitive skills apart from the returns to labor which they help to enhance.  Such satisfactions could be intrinsic,
or they could represent externalities that cognitive skills and education provide,
such as social status or --- as in early drafts of \cite{McCannShiSiowWolthoff13p} --- marital prospects.
We can also remove this effect from the model by setting $\cm=0$.  
However,  to implement the existence strategy outlined above, 
it turns out to be technically easier to analyze the case $\cm>0$ first,  
and then take the limit $\cm \to 0$ if desired.  Many but not all of our structural results 
such as uniqueness, specialization, and positive assortativity also survive this limit;  see Proposition~\ref{P:specialization} and Theorem \ref{T:unique} for example.


We shall also investigate occupational specialization by cognitive skill,
showing $ \min\{N'\theta', N \theta\} \ge  \bar b_L'/\ub b_L'$
implies that the highest types become teachers,  while the lowest types become either workers or teachers,
but not managers.
More refined statements appear in Proposition \ref{P:specialization}.
For continuously distributed skill types, we show that a pyramid can form in the education sector,  sometimes leading to divergence of wage gradients at the highest
skill type when $N\theta \ge 1$,  meaning the span of control at each node in the pyramid 
is large enough.
More explicitly,  under suitable conditions
Theorem~\ref{T:phase transition} asserts that as $k\to \bar k$,
$$
v'(k) \sim \left\{
\begin{array}{ccc}
const|\bar k - k|^{-1-\frac{\log \theta}{\log N}} &{\rm for}& N\theta > 1, \\
{c \bar b_E'}/{(\frac1{N\theta} -1)} 
&{\rm for}& N\theta <1,
\end{array}
\right.
$$
so a phase transition occurs at $N\theta=1$.
A less involved investigation of an analogous pyramid structure was given by
Becker and Murphy \cite[\S VII]{BeckerMurphy92},
in a different production model incorporating the cost of acquiring knowledge 
and assuming steady-growth as opposed to steady-state.
To produce a similar pyramid in the labor sector,  our model would need to be modified to
permit managers to manage other managers --- as in 
Garicano~\cite{Garicano00} 
with Rossi-Hansberg~
\cite{GaricanoRossiHansberg06} ---
instead of being forced to manage only workers whose productivity is inherently limited.  
If such a modification to our model could be achieved, 
it would have the potential to complement existing models for 
executive compensation such as Gabaix and Landier's \cite{GabaixLandier08},
which rely instead on 
comparing given tail behaviors of the distributions of
company size and managerial talent.

Finally, Corollary \ref{C:characterize} characterizes the optimizers in the
primal and dual problems \eqref{three line dual}--\eqref{three line primal}.
Theorem \ref{T:unique} provides sufficient conditions for
uniqueness of $(\epsilon,\lambda)$,
and discusses in what sense $(u,v)$ are also unique.
It gives conditions
guaranteeing the optimal pairings $\lambda$ of workers with managers
and  $\epsilon$ of teachers with students
are positive assortative in cognitive
skills,  meaning $\spt \lambda$ and $\spt \epsilon$  are non-decreasing 
subsets of the plane.  This monotonicity is intimately tied to the
convexity of wages $v$ as a function of $k \in [0,\bar k]$ asserted above.



\section{Analysis}

\subsection{Terminology and notation}



In this section, we introduce terminology and notation that
will be useful for dealing with functions which need neither be smooth
nor bounded, and with the measures which arise naturally as their duals.

Given any convex set $B \subset \R^n$, a function
$u:B \longrightarrow \R \cup \{+\infty\}$
is said to be continuous if it is upper and lower semicontinuous.
It is said to be
{\em Lipschitz} with Lipschitz constant $L$ if
either $u$ is identically infinity or else if
$$L := \sup_{B \ni x \ne y \in B}  \frac{|u(x) - u(y)|}{|x-y|}$$
is finite.  It is said to be {\em semiconvex} with semiconvexity constant $C$
if the function $x \in B \longmapsto u(x) + C|x|^2/2$ is convex.
It is said to be {\em locally} Lipschitz (respectively semiconvex)
on $B$, if $u$ is Lipschitz (respectively semiconvex) on every compact
convex subset of $B$.  Locally Lipschitz (respectively semiconvex) functions
are once (respectively twice) differentiable Lebesgue a.e.  In addition,
locally semiconvex functions fail to be once differentiable on a set of
Hausdorff dimension at most $n-1$.  

By {\em support} of a Borel measure $\alpha \ge 0$ on $\R^m$,  we mean the smallest
closed subset $\spt \alpha\subset \R^m$ of full mass: $\alpha[\R^m \setminus \spt \alpha]=0$.
The {\em push-forward} $f_\#\alpha$ of $\alpha$ through a Borel map
$f:\R^m \longrightarrow \R^n$ is a Borel measure defined by
$(f_\#\alpha)[Z] = \alpha[f^{-1}(Z)]$ for each $Z \subset \R^n$.
We say $\alpha$ has no {\em atoms} if $\alpha[\{x\}]=0$ for each $x \in \R^m$.
A measure $\epsilon$ on $\R^2$ is said to be {\em positive assortative} if
$\spt \epsilon$ forms a non-decreasing subset in the plane: i.e.\ if
$(a'-a)(k'-k) \ge 0$ for all $(a,k),(a',k') \in \spt \epsilon$.  We use
$\alpha|_B$ to denote the restriction $\alpha|_B(Z)=\alpha[Z\cap B]$ of $\alpha$ to
$B \subset \R^m$,  and $H^n$ to denote Lebesgue measure on $\R^n$.

\subsection{The educational pyramid}

In this section, we discuss the extent to which we can expect optimizers
$(u,v)$ to the minimization \eqref{three line dual} 
to be smooth,  at least
away from the top skill type~$\bar k$.
We then apply these results to elucidate the nature of the
pyramid structure which can form in the education sector.


Given $(u,v) \in F_0$ feasible for the infimum \eqref{three line dual},
use $\blt(k',k) := b_L((1-\theta')k' + \theta' k)$ and $z(a,k) = (1-\theta)a + \theta k$
to define the wages implicitly available to an individual
of cognitive skill $k$ employed as a worker, manager, or teacher, respectively:
\begin{eqnarray}\label{wage_w}
v_w(k) &:=& \phantom{N'} \sup_{k'\in \bar K} \blt(k,k')-\ts \frac{1}{N'} v(k'),
\\ \label{wage_m}
v_m(k) &:=& N' \sup_{k' \in \bar K} \blt(k',k) - v(k'), \qquad {\rm and}
\\ \label{wage_t}
v_t(k) &:=& N \sup_{a \in \bar A}  \ts cb_E(z(a,k)) + v(z(a,k)) - u(a),
\end{eqnarray}
where we complete definition \eqref{wage_t}, and later \eqref{student wage},
with the convention
\begin{equation}
\label{infinity convention} \infty - \infty := \infty.
\end{equation}
The suprema \eqref{wage_w}--\eqref{wage_m} are attained when $u$ and $v$ are proper
(hence lower semicontinuous), and the same holds true for
 \eqref{wage_t} if, in addition, $v$ is convex non-decreasing 
(hence continuous).

Clearly feasibility \eqref{stability education}--\eqref{stability bound} implies
$v \ge \bar v := \max\{v_w,v_m,v_t\}$.  When equality holds ---
as we shall see that it does (Theorem \ref{T:minimizing wages})
for some $v$ 
minimizing \eqref{three line dual}
 --- this implies strong conclusions.
For example, $v_w$ and $v_m$ inherit Lipschitz and convexity
properties from $b_L$  by an envelope argument
(Lemma \ref{L:inherit Lipschitz and semiconvex}),
which $v$ also inherits wherever it coincides with $v_w$ or $v_m$.
Something similar is true but more subtle to verify for $v_t$
(and hence for $\bar v$) ---
because of the recursive structure built into the educational pyramid;
in \eqref{wage_t}, as opposed to \eqref{wage_w}--\eqref{wage_m},
this is manifested in the fact that the $k$ dependence in the argument of the supremum
involves the unknown function $v$.
As another example, when $N'\theta'$ and $cN\theta$ are large enough,
Proposition \ref{P:specialization} derives complete specialization of types into
low (workers), medium (managers),  and high (teachers).
This at least tells us the role of
$\kappa$-a.e.\ agent,  leaving the distribution
$\kappa = \kappa_w + \kappa_m  + \kappa_t$ of adults as the
only unknown.  Here $\kappa_w = \lambda^{1}$, $\kappa_m = \lambda^{2}/N'$ and
$\kappa_t = \epsilon^{2}/N$
are measures representing the distribution of worker, manager, and teacher types,
and have respective masses
$\kappa_w[\bar K] = \frac{(N-1)N'}{N(N'+1)}$, $\kappa_m[\bar K]=\frac{N-1}{N(N'+1)}$
and $\kappa_t[\bar K]=\frac1N$.
If $c=0$ but $\min\{N'\theta',N\theta\} \ge \bar b_L'/\ub b_L'$,
the same proposition yields more subtle conclusions.

A first insight into the educational pyramid is provided by the following example.

\begin{example}[Gurus] Fix the number of students each teacher can teach or the
number of workers each manager can manage to be $N=N'=10$.  If our probability
measure $\kappa$ represents the skill distribution for a population of $110$ adults,
90 of them will be workers,  managed by 9 managers,  and 11 of them will be teachers.
Nine of these $11=9+1+1$ will specialize in teaching workers,  one in teaching teachers,
and one in teaching a combination of 9 managers and 1 teacher.
We may remember this with the mnemonic $110= 90+9+(9+1+1)$.
On the other hand, if $\kappa$ represents the skill distribution for a population of
$11000 = 9000+900+(900+90+(90+9+(9+1+1))$ adults,  9000 of them will be workers,
managed by 900 managers, while 1100 of them will be teachers.  Of these,
900 will teach workers, 90 will teach managers, and 110 will teach teachers.
Within these 110,  there is further specialization as before: 90 will teach teachers
who teach workers,  9 will teach teachers who teach managers,  and 11 will teach
teachers who teach teachers.  Within this 11,  9 teach worker-teacher-teachers,
1 teaches manager-teacher-teachers, and 1 teaches only teacher-teacher-teachers.
These last two may be thought of as {\em `gurus'}. One of the questions at stake
is whether the salaries of these gurus can grow without bounds as the population size grows.
\end{example}

Next we recall without proof a well-known result which can be proved as
in~\cite{GangboMcCann96}:

\begin{lemma}[Upper envelopes inherit derivative bounds]
\label{L:inherit Lipschitz and semiconvex}
If $f:A \times K \longrightarrow \R$ is locally Lipschitz in $a \in A$,
uniformly in $k \in K$,  then $g(a) = \sup_{k \in K} f(a,k)$ is locally Lipschitz
and for each $\delta>0$ we have the bounds
$$
\inf_{k \in K, |a'-a|<\delta} f_a(a',k) \le
g'(a) \le \sup_{k \in K, |a'-a|<\delta} f_a(a',k)
$$
in 
the pointwise a.e.\  
senses.
Similarly,  if $f$ is locally semiconvex in $a \in A$,
uniformly in $k \in K$,  then $g(a)$ is locally semiconvex and obeys the
bound
$$
g''(a) \ge \inf_{k \in K, |a'-a|<\delta} f_{aa}(a',k)
$$
in the same senses.
Here $f_a := \frac{\p f}{\p a}$ and $f_{aa} := \frac{\p^2 f}{\p a^2}$.

If $f(a',\cdot)$ extends upper semicontinuously to $\bar k$ for some $a' \in A$,
allowing  $f(a',\bar k) = -\infty$ as a possible value,
there exists $k' \in \bar K$ such that $g(a') = f(a',k')$;
if $g(a)$ is differentiable at
$a' \in ]\ub a,\bar a[$, the envelope theorem then yields
$g'(a') = f_a(a',k')$ provided $f(\cdot,k')$ is locally semiconvex near $a'$;  similarly,
$g''(a') \ge f_{aa}(a',k')$ provided both functions
admit a second order Taylor expansion with respect to $a$ at $a'$.
\end{lemma}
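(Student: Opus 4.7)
The plan is to prove each inherited bound by directly comparing $g(a)$ with $f(a,k)$ for a well-chosen $k$, exploiting the stability of convexity and of Lipschitz control under pointwise suprema, and then refining these comparisons to almost-everywhere pointwise bounds via the fundamental theorem of calculus. The envelope identities will follow from the observation that if $k'$ attains the supremum defining $g(a')$, then $g - f(\cdot,k')$ has a local minimum at $a'$.

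First, on any compact subinterval $[a_0,a_1] \subset A$ where $f(\cdot,k)$ has uniform Lipschitz constant $L$, the inequality $f(a,k) \le f(a'',k) + L|a-a''|$ yields $g(a) \le g(a'') + L|a-a''|$ after taking the supremum over $k$; reversing roles gives local Lipschitzness of $g$. To refine this at an a.e.\ point of differentiability $a \in A$ of $g$, fix $\delta>0$ and $0<h<\delta$; since $f(\cdot,k)$ is absolutely continuous with $f_a \le S$ for $S := \sup_{|a''-a|<\delta,\ k \in K} f_a(a'',k)$,
$$
f(a+h,k) - f(a,k) = \int_a^{a+h} f_a(a'',k)\,da'' \le Sh.
$$
Taking the supremum over $k$, dividing by $h$, and letting $h \to 0^+$ yields $g'(a) \le S$; the opposite bound follows by letting $h \to 0^-$ symmetrically. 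The semiconvex case is cleaner: setting $C := \inf_{|a''-a|<\delta,\ k \in K} f_{aa}(a'',k)$, the function $a'' \mapsto f(a'',k) - \frac{C}{2}|a''|^2$ is convex on $(a-\delta,a+\delta)$ uniformly in $k$, and since the pointwise supremum of a family of convex functions is convex, the same holds for $g(\cdot) - \frac{C}{2}|\cdot|^2$, giving $g''(a) \ge C$ a.e.

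For the envelope conclusions, existence of $k' \in \bar K$ attaining the supremum in $g(a')$ is immediate from upper semicontinuity of $f(a',\cdot)$ on the compact set $\bar K$, even allowing $f(a',\bar k) = -\infty$. The difference $\phi(a) := g(a) - f(a,k')$ is then non-negative near $a'$ and vanishes at $a'$, so $a'$ is a local minimum of $\phi$. Local semiconvexity of $f(\cdot,k')$ implies local Lipschitzness and hence finite one-sided derivatives with $f_a^-(a',k') \le f_a^+(a',k')$. Dividing the inequality $\phi \ge 0$ by $\pm h$ and taking one-sided limits at $a'$ gives $f_a^+(a',k') \le g'(a') \le f_a^-(a',k')$, so all three quantities coincide and $g'(a') = f_a(a',k')$. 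When in addition both $g$ and $f(\cdot,k')$ admit second-order Taylor expansions at $a'$, subtracting these expansions and using the envelope identity yields
$$
\phi(a) = \tfrac{1}{2}\bigl[g''(a') - f_{aa}(a',k')\bigr](a-a')^2 + o(|a-a'|^2),
$$
and non-negativity of $\phi$ near $a'$ forces the bracketed coefficient to be non-negative.

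The main technical subtlety is the precise sense in which the inherited derivative bounds are interpreted: $g$ is merely locally Lipschitz (respectively locally semiconvex), so $g'$ (respectively $g''$) is defined only on a full-measure subset by Rademacher's theorem (respectively by the a.e.\ twice differentiability of semiconvex functions). All remaining steps are essentially manipulations that preserve convexity or Lipschitz structure under suprema, together with elementary one-sided differentiation arguments at the minimizer $a'$ of $\phi$.
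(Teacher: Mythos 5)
The paper does not actually supply a proof of this lemma: it is stated as a well-known result, with a pointer to Gangbo--McCann~\cite{GangboMcCann96} for the technique. So there is no in-paper argument to compare against; you are filling a gap the authors deliberately left.

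Your proof is correct and self-contained, and it uses exactly the ideas one would expect from the cited reference. A few small points worth being aware of. In the Lipschitz step you use $g(a+h)-g(a)\le\sup_k[f(a+h,k)-f(a,k)]$, which rests on the elementary inequality $\sup_k(p_k+q_k)\le\sup_k p_k+\sup_k q_k$; this is fine but worth stating, since it is the crux of why suprema preserve one-sided derivative bounds. In the semiconvex step, note that the constant $C=C(a,\delta)$ depends on the base point $a$, so the conclusion $g''(a)\ge C(a,\delta)$ is genuinely a pointwise-a.e.\ statement rather than a single semiconvexity constant for $g$; your phrasing is consistent with that, but it is a place readers sometimes trip. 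In the envelope step, the nonnegativity of $\phi(a)=g(a)-f(a,k')$ near $a'$ is immediate when $k'\in K$; when $k'=\bar k$ one needs the (tacit) convention that $f(a,\bar k)$ is the upper semicontinuous extension so that $f(a,\bar k)\le g(a)$ for $a$ near $a'$ as well, which is what the paper intends but does not spell out. Finally, your sandwich argument correctly isolates why local semiconvexity of $f(\cdot,k')$ is needed for the first-order envelope identity: it guarantees $f_a^-\le f_a^+$, which combined with $f_a^+\le g'(a')\le f_a^-$ forces all three to coincide. None of these caveats constitutes a gap; the proposal is a valid proof of the lemma as stated.
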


\begin{definition}[Supermodular]
Given intervals $I,J \subset \R$,
a function $f: I\times J \longrightarrow \R$ is {\em weakly supermodular} if
\begin{equation}\label{supermodular}
f(a,k) + f(a',k') \ge f(a,k') + f(a',k)
\end{equation}
for all $1\le a <a' \in I$ and $1 \le k < k' \in J$.
It is {\em strictly} supermodular if, on the same domain,
the inequality \eqref{supermodular} remains strict. 
\end{definition}


\begin{remark}[Supermodular extensions]
It is elementary to check that a function $f$ which is weakly (or strictly) supermodular
on $A \times K$ and has an upper semicontinuous extension to $\bar A \times \bar K$
that is continuous and real-valued except perhaps at $(\bar a,\bar k)$,  is weakly (respectively strictly) supermodular on $\bar A \times \bar K$.
%
\end{remark}


Throughout we assume $\theta,\theta',N,N'$ and $\bar a = \bar k$ are positive parameters with 
$\max\{\theta,\theta'\}<1\le N$, and set $c \ge 0$ and $A =[0,\bar a[=K$. Unless otherwise noted,
the utilities $b_E,b_L \in C^1(\bar K)$ 
of education and labor have positive lower bounds $\ub b_{E/L}'$ and $\ub b_{E/L}''$ 
on their first two derivatives \eqref{utility bound 0}--\eqref{utility bound 2},
hence are strictly convex and increasing.  

\begin{lemma}[Structure of wage functions]
\label{L:integral form}
Let $v:K\longrightarrow \R$ 
be convex non-decreasing, with $v(\bar k) \ge \limsup_{k \to \bar k} v(k)$. 
Then $f(a,k) = v(z(a,k))$ will be weakly supermodular on $\bar A \times \bar K$,
and strictly supermodular unless the convexity of $v$ fails to be strict.

Set $z(a,k)=(1-\theta)a + \theta k$,
$\bet = b_E \circ z$ and $\blt(k',k) = b_L((1-\theta')k'+\theta'k)$ where 
$b_{E/L} \in C^1(\bar K)$ satisfy \eqref{utility bound 0}--\eqref{utility bound 2}.
%
Then the student payoff $u$ defined by \eqref{student wage}
is also 
convex non-decreasing 
on $K$ and 
satisfies $\frac{u'}{1-\theta} \ge \cm \ub b_E' + \inf_{k} v'(k)$ and
$\frac{u''}{(1-\theta)^2} \ge \cm \ub b_E'' +\inf_{k}  v''(k)$ pointwise a.e.

The worker, manager, and teacher wage functions $v_{w / m / t}$ 
defined
by \eqref{wage_w}--\eqref{infinity convention} and their maximum ${\bar v} := \max\{v_w,v_m,v_t\}$
are then monotone and convex on $\bar K$, real-valued on $K$, and satisfy
$\bar v' \ge \min\{(1-\theta')\ub b_L',N'\theta' \ub b_L',N\theta (c \ub b_E'+\inf_{k} v'(k))\}$ and
$\bar v'' \ge \min\{(1-\theta')^2 \ub b_L'',(\theta')^2N'\ub b_L'',N\theta^2(c\ub b_E''+ \inf_{k} v''(k))\}$
pointwise a.e.\ 

\end{lemma}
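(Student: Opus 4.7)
The first claim is a purely one-dimensional convexity observation. For $a<a'$ and $k<k'$, the four values $x_1:=z(a,k)$, $x_2:=z(a',k')$, $y_1:=z(a,k')$, $y_2:=z(a',k)$ satisfy $x_1+x_2 = y_1+y_2 = (1-\theta)(a+a')+\theta(k+k')$ while $\min(x_1,x_2)\le\min(y_1,y_2)\le\max(y_1,y_2)\le\max(x_1,x_2)$, so the unordered pair $\{x_1,x_2\}$ majorizes $\{y_1,y_2\}$. Convexity of $v$ on the interval they span then gives $v(x_1)+v(x_2)\ge v(y_1)+v(y_2)$, which is \eqref{supermodular} for $f=v\circ z$. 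Since $\theta\in\,]0,1[$ the gap $y_2-y_1$ (or $x_2-x_1$) is strictly positive when $a<a'$ and $k<k'$, so if $v$ is strictly convex on $[\min x_i,\max x_i]$ the inequality is strict. The extension to $\bar A\times \bar K$ uses the upper-semicontinuity hypothesis $v(\bar k)\ge \limsup_{k\to\bar k}v(k)$ together with the preceding remark on supermodular extensions.

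For the student payoff, the convention suggested by \eqref{stability education} and \eqref{infinity convention} is
\[ u(a)=\sup_{k\in\bar K}\,\bigl[c\,b_E(z(a,k))+v(z(a,k))-\tfrac1N v(k)\bigr]. \]
For each fixed $k$, the bracket is a composition of the convex non-decreasing function $cb_E+v$ with the increasing affine map $a\mapsto z(a,k)$, hence convex and non-decreasing in $a$; taking the supremum preserves these properties. The derivative bounds then follow from Lemma~\ref{L:inherit Lipschitz and semiconvex}: at a.e.\ $a$ the envelope identity gives $u'(a)=(1-\theta)[cb_E'(z(a,k^*))+v'(z(a,k^*))]$ where $k^*$ attains the sup, so $u'(a)/(1-\theta)\ge c\ub b_E'+\inf_{k}v'(k)$; the companion second-derivative bound follows from the semiconvexity statement of the same lemma applied to $cb_E+v$ pulled back by the affine map with slope $1-\theta$ in $a$.

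Each of $v_w,v_m,v_t$ has exactly the same structure: it is the supremum over an auxiliary variable of a function which, in the argument $k$, is the composition of a convex non-decreasing function ($b_L$, $b_L$, or $cb_E+v$ respectively) with an increasing affine map of slope $1-\theta'$, $\theta'$, or $\theta$. Each of $v_w,v_m,v_t$ is therefore convex non-decreasing, as is $\bar v$; applying Lemma~\ref{L:inherit Lipschitz and semiconvex} on each branch yields the pointwise a.e.\ first-derivative bounds $(1-\theta')\ub b_L'$, $N'\theta'\ub b_L'$, and $N\theta(c\ub b_E'+\inf v')$ respectively, and the analogous second-derivative bounds; taking the max over branches produces the min of bounds asserted for $\bar v', \bar v''$. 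Real-valuedness on the half-open interval $K$ follows from convexity together with the uniform lower bounds \eqref{utility bound 0}--\eqref{utility bound 2} and \eqref{stability bound}, which prevent any of the suprema from equalling $+\infty$ away from $\bar k$.

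The main technical subtlety is justifying the envelope identity for $v_t$ (and hence for $u$), because the $k$-dependence inside its supremum involves the as-yet uncontrolled function $v$; to invoke Lemma~\ref{L:inherit Lipschitz and semiconvex} one must know that a maximizer exists and that $cb_E+v$ admits a local Taylor expansion there. Existence of a maximizer is guaranteed once $v$ is proper convex non-decreasing with the stated upper-semicontinuous extension to $\bar k$, and differentiability of the composition holds Lebesgue-a.e. by convexity of $v$; the bounds are then stated in the pointwise a.e.\ sense, which is exactly what the lemma provides.
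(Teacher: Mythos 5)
Your proposal is correct and follows essentially the same structure as the paper's proof: express $u$, $v_w$, $v_m$, $v_t$ as envelopes of convex non-decreasing functions of the $a$- or $k$-variable, invoke Lemma~\ref{L:inherit Lipschitz and semiconvex} (and its envelope identity) for the pointwise a.e.\ first- and second-derivative bounds, then take the maximum to get the bounds on $\bar v$; you also correctly identify the subtlety that $v_t$'s supremand depends on $v$ itself and handle the extension to $\bar K$ via the remark on supermodular extensions.

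The one place you genuinely diverge from the paper is the supermodularity argument. The paper approximates $v$ by $C^2$ functions and writes the modularity deficit as the double integral $(1-\theta)\theta\int_{a_0}^{a_1}\int_{k_0}^{k_1} v''((1-\theta)a+\theta k)\,da\,dk\ge 0$, with strictness traced to $v''$ not vanishing identically on the corresponding interval. You instead observe that $\{z(a,k), z(a',k')\}$ majorizes $\{z(a,k'), z(a',k)\}$ (same sum, strictly wider spread when $\theta\in\,]0,1[$, $a<a'$, $k<k'$) and invoke Karamata's inequality for convex $v$. This is a clean alternative that avoids the smoothing step and makes the strictness criterion transparent; the paper's integral form, on the other hand, is consistent with the approximation machinery it sets up for the derivative bounds anyway. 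Both arguments are equally rigorous, so this is a cosmetic rather than substantive difference.
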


\begin{proof}
First note that convexity and monotonicity imply $v$ is continuous throughout $K=[0,\bar k[$.
Any convex $v \not\in C^2$ can be approximated by convex $v_i \in C^2$
locally uniformly on $]0,\bar k[$,  with $v_i' \to v'$ pointwise a.e. (and $v_i'' \to v''$ weakly).


Now let $f(a,k) = \cm b_E(z(a,k)) + v(z(a,k))$.  
For each fixed $\tilde k$, we see $f$ is convex non-decreasing as a function of $a \in \bar A$,
so the same must be true of the supremum 
$u(a) = \sup_{k \in \bar K} f(a,k) - v(k)/N$.  
Supposing for simplicity that $v$ and $b_E$ are $C^2(\bar A)$, from
$$\ts
f_{a}(a,k) = 
\left(\cm b_E'(z(a,k)) + v'(z(a,k))\right) z_a(a,k)
$$
and $0 \le z(a,k) = (1-\theta)a + \theta k$
we compute bounds
$$
\cm \ub b_E'+ \inf_{} v' 
\le \frac{f_{a}(a,k)}{1-\theta} \le 
\cm b_E'(z(a,\bar k)) + v'(z(a,\bar k)) 
$$
and
\begin{eqnarray*}
\frac{f_{aa}(a,k)}{(1-\theta)^2} &=& 
\cm b_E''(z(a,k)) + v''(z(a,k)) 
\\ &\ge& 
\cm \ub b_E''+ \inf v'' 
\end{eqnarray*}
which are uniform in $k \in \bar K$.
The
analogous bounds for $u$ follow from Lemma~\ref{L:inherit Lipschitz and semiconvex}.

So far,  we have been working under the assumption that $v$ and $b_E$ are $C^2(\bar K)$.
More generally,  $v$ and $b_E$ can be approximated uniformly on compact subsets of
$K$ by $C^2$ functions $v^i$ and $b_E^i$ satisfying the same hypotheses as $v$ and $b_E$.
As a result, $f^i(a,k) := b^i_E(z(a,k))  + v^i(z(a,k))$ converges to $f$ uniformly
on compact subsets of $\bar A^2 \setminus \{(\bar a,\bar k)\}$,
and $u^i(a) := \sup_{k \in \bar K} f^i(a,k) - \frac{1}{N} v(k)$ converges
uniformly to $u$ on compact subsets of $A$.  Thus $u$ inherits the same Lipschitz
and local semiconvexity bounds as $u^i$ in the distributional 
(and hence pointwise a.e.) sense. See \eqref{distributional} for the distributional definition
of the inequality $v_i'' \ge g$.

On the other hand,  $f(a,k; \theta) = f(k,a; 1-\theta)$ is symmetrical, 
and $v_t(k)/N$ is defined by
essentially the same formula as $u(a)$,  but with the roles of $a \leftrightarrow k$
and $\theta \leftrightarrow 1-\theta$
interchanged. Thus $v_t$ is also locally Lipschitz and convex
on $K$, and satisfies
$
v_t' \ge N\theta (\cm \ub b_E'+ \inf_b v'(b))$ and
$v_t'' \ge N\theta^2 (\cm \ub b_E''+ \inf_b v''(b))$.

Turning to $v_w$ and $v_m$,  we apply Lemma \ref{L:inherit Lipschitz and semiconvex}
but with $f(a,k) := \blt(a,k)= b_L((1-\theta')a + \theta' k)$,  which is jointly
convex and increasing in each variable. 
Approximating $b_L$ by $C^2(\bar K)$ functions if necessary, shows bounds  
$$\ub b_L'= b_L'(0^+) \le \frac{f_{a}(a,k)}{1-\theta'} = b_L'((1-\theta')a + \theta'k) 
\le b_L((1-\theta')a^- +\theta'{\bar k}) \le \bar b_L'
$$
and 
$$
\frac{f_{a a}(a,k)}{(1-\theta')^2} = b_L''(z'(a,k)) \ge \ub b_L'' 
$$
are inherited by the convex increasing functions $v_w$ and $\frac{1}{N'}v_m$ on $K$.
Thus ${\bar v} = \max\{v_w,v_m,v_t\}$ is convex, non-decreasing,
locally Lipschitz and
inherits the bounds
${\bar v}' \ge \ub \min\{(1-\theta')\ub b_L',N'\theta'\ub b_L',N\theta(\cm \ub b_E' + \inf_b v'(b))\}$
and ${\bar v}''  \ge \min\{(1-\theta')^2\ub b_L'',N'(\theta')^2\ub b_L'',N\theta^2(\cm \ub b_E''+ \inf_b v''(b))\}$
on $K$.

Finally,  setting $f(a,k) = v(z(a,k))$, using convexity of $v \in C^2(\bar K)$
we compute 
\begin{eqnarray*}
f(a_0,k_0)& +& f(a_1,k_1) - f(a_0, k_1) - f(a_1,k_0)
\\ &=& (1-\theta)\theta  \int_{a_0}^{a_1} \int_{k_0}^{k_1} v''((1-\theta)a+\theta k) da dk 
\\ &\ge&0
\end{eqnarray*}
for $a_0 < a_1$ and $k_0<k_1$.  For $v \not\in C^2$,  the same formulas hold by smooth 
approximation of $v = \lim v^i$.
Strict inequality holds unless $v'' = 0$ throughout $]z(a_0,k_0),z(a_1, k_1)[$. 
This yields the (strict) supermodularity \eqref{supermodular}
asserted. 
\end{proof}


We are now in a position to prove our first main result, which
describes how occupations are allocated according to cognitive skill.
It depends on the relative size of various parameters:
the teaching capacity $N$ (resp. $N'$) and effectiveness $\theta$  (resp. $\theta'$)
of teachers (resp. managers) in the population in question,
the range $\bar k$ of cognitive skills,
and the relative utility $\cm \ge 0$ of cognitive achievement
compared to wages.  When $N'\theta'$ and $cN\theta$ are large enough
it turns out that there is a complete ordering (a)-(b)
of skill types between workers, managers, and teachers in a steady-state economy.  
However 
$N\theta \ge 1$ is enough to ensure that 
no student studies with a teacher whose cognitive skills are inferior to their
own (d), while $N'\theta'$ and $N\theta$ large enough guarantee
that the most cognitively skilled types all become teachers (c)
(though not that all teachers have high cognitive skills).
This conclusion will help us  
to establish the phase transition 
from bounded to unbounded wage gradients that these teachers
enjoy as $N\theta$ passes through $1$ (in section \ref{S:phase transition}).
The possibility (f) that the number $d(k)$ of types of academic descendants a teacher can have
may grow without bound as $k \to \bar k$ foreshadows the analysis there. 

%

\begin{remark}
Note that 
in the following proposition, (c) and (d) 
together imply (e), meaning at least one
of the two inequalities $N\theta \ge 1$ or $\cm\ge 0$ is strict.
Also note $N'\theta'\ge \bar b'_L/ \ub b_L'$ and $N\theta \ge \bar b'_L/\ub b_L'$
are sufficient for (b) and (c), respectively.
\end{remark}

\begin{proposition}[Specialization by type; the educational pyramid]\label{P:specialization}
Fix $K=[0,\bar k[$ with $\bar k>0$,
and $\cm 
\ge 0$. Suppose $u,v:K \longrightarrow \R$ are
convex, nondecreasing, and 
satisfy $v=\max\{v_w,v_m,v_t\}$. 

If (a) $N\theta\cm  \ub b_E' \ge \bar b_L' \max\{N'\theta',1-\theta'\}$
then all teacher types lie weakly above all of the manager and worker types.

If (b) 
$N'\theta' > (1-\theta') \sup_{k\in K} b_L'(1-\theta')k+\theta'\bar k^-)/b_L'(\theta'k^+)$ 
then all of the worker types lie weakly below all of the manager types.

If (c) $N\theta \ge \sup_{0\le z \le k} b'_L((1-\theta')z^-+\theta'\bar k)/ (b_L'(\theta'z^+)+\frac c{N'\theta'}b_E'(z^+))$ 
and (b) holds, and
$f(a,k) := u(a) + \frac 1N v(k) -c b_E(z(a,k)) - v(z(a,k))$ vanishes at some
$(a,k) \in K \times K$ where $v(z(a,k)) = v_m(z(a,k))$, then $v> v_m$ on $]k,\bar k]$. 
In other words,  no manager (or worker) can have a type higher than a teacher of managers.

If (d) $N\theta \ge 1$, 
then any student of type $a\in K$ will be weakly less skilled
than his teacher, and strictly less skilled if (e) either $c>0$ or $N\theta>1$ in addition.

If (f) either $\cm>0$ or $v'(0^+)>0$, 
then (d)--(e) imply all academic descendants of a teacher with skill $k \in K$
will display one of at most finitely many $d=d(k)$ distinct skill types, unless 
differentiability of $v$ fails at $k$.  However, $d(k)$ may
diverge as $k \to \bar k$, in which case $v'(k)\to+\infty$ at a rate related
to $d(k)$ by \eqref{diverging sum}. 

\end{proposition}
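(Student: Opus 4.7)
My approach combines the pointwise derivative estimates for $v_w, v_m, v_t$ from Lemma~\ref{L:integral form}, envelope-theorem differentiation of the suprema defining \eqref{wage_w}--\eqref{wage_t}, and the convexity of $v$ (hence monotonicity of $v'$) via the relation $v = \max\{v_w, v_m, v_t\}$. The basic tactic is to convert pointwise derivative inequalities among $v_w', v_m', v_t'$ into weak orderings of the level sets on which each of the three envelopes achieves the maximum; these level sets encode the occupational structure.

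For \textbf{(a)} and \textbf{(b)}, I would sharpen the envelope: since the maximizer in \eqref{wage_m} lies in $[0, \bar k]$, we have $N'\theta' b_L'(\theta' k^+) \le v_m'(k) \le N'\theta' \bar b_L'$ and similarly $(1-\theta')\ub b_L' \le v_w'(k) \le (1-\theta') b_L'((1-\theta')k + \theta'\bar k^-)$, while $v_t' \ge N\theta c\ub b_E'$ by Lemma~\ref{L:integral form}. The hypothesis of (a) then yields $v_t' \ge \max\{v_w', v_m'\}$ and that of (b) yields $v_m' > v_w'$ pointwise. Since the three envelopes are convex non-decreasing, such pointwise derivative domination forces the dominating envelope's level set to lie weakly above the dominated one, producing the claimed occupational orderings. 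Part \textbf{(c)} is more delicate: from $f(a,k) = 0$ and feasibility $v \ge v_t$ I extract $v(k) = v_t(k)$, so the envelope gives $v_t'(k) \ge N\theta[cb_E'(z) + v'(z)]$ with $z = z(a,k) \in [0, k]$ by part (d); since $v'(z) = v_m'(z) \ge N'\theta' b_L'(\theta' z^+)$ at the manager point $z$, hypothesis (c) applied at this specific $z$ delivers $v_t'(k) > v_m'(k)$ strictly, and I would propagate this inequality along $]k, \bar k]$ via supermodularity of the teacher problem combined with a convexity-based contradiction argument.

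For \textbf{(d)} and \textbf{(e)}, differentiating $f(a,k) = u(a) + v(k)/N - cb_E(z) - v(z)$ at an interior zero (using that $f \ge 0$ everywhere) produces the first-order condition
\begin{equation*}
v'(k) = N\theta\bigl[cb_E'(z) + v'(z)\bigr], \qquad z = (1-\theta)a + \theta k,
\end{equation*}
interpreted in the (sub)gradient sense and justified by the $C^2$ approximation scheme used in Lemma~\ref{L:integral form}. Under $N\theta \ge 1$ this yields $v'(k) \ge v'(z)$, and monotonicity of $v'$ forces $k \ge z$, equivalent to $a \le k$. Under (e), strictness follows either from $c > 0$ (the bracket strictly exceeds $v'(z)$) or from $N\theta > 1$ combined with the strict positivity $v'(z) > 0$ (a consequence of $v \ge v_w$ and $v_w' \ge (1-\theta')\ub b_L' > 0$); either way $v'(k) > v'(z)$, whence $k > z$ and $a < k$.

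For \textbf{(f)}, I iterate the envelope relation. Under hypothesis (f), the map $z \mapsto cb_E'(z) + v'(z)$ is strictly increasing, so at any differentiability point $k$ of $v$ the relation pins down a unique descendant skill $Z(k) < k$; the academic progeny tree collapses to the forward orbit $\{Z^n(k)\}_{n \ge 0}$, a strictly decreasing sequence in $[0, k]$, whose distinct entries enumerate the descendant types. The orbit either terminates (yielding $d(k) < \infty$) or is infinite, and telescoping the envelope relation $v'(Z^n(k)) = N\theta[cb_E'(Z^{n+1}(k)) + v'(Z^{n+1}(k))]$ in the latter case produces the asserted divergence of $v'(k)$. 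The hardest step will be part \textbf{(c)}: the hypothesis is a pointwise bound at $z \in [0, k]$ whereas the conclusion must hold globally on $]k, \bar k]$, so the propagation argument requires careful bookkeeping of how the envelope maximizer $z(a^*(s), s)$ evolves as $s$ increases and how the strict local derivative inequality at $k$ can be extended (or globalized via contradiction) to yield the ordering $v > v_m$ throughout.
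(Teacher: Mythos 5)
Your plan for parts (a), (b), (d) and (e) is essentially the paper's argument: derive envelope estimates $v_w'(k) = (1-\theta')b_L'((1-\theta')k+\theta'k'_m)$, $v_m'(k) = N'\theta'b_L'((1-\theta')k'_w+\theta'k)$, and the recursive $v_t'(k_0) = N\theta(\cm b_E'(k_1)+v'(k_1))$ with $k_1 = z(a_1,k_0)$, then translate pointwise derivative orderings into level-set orderings using monotonicity of $v'$. For (f), your telescoping argument can be made equivalent to the paper's termination proof: an infinite orbit would force $v'(k)=\infty$ via the geometric sum (using (d)--(f) to keep the summands bounded away from zero or the prefactor $(N\theta)^n$ growing), contradicting $v$ real-valued convex on $K$; the paper instead passes to the orbit's limit $k_\infty$ and observes that $(\frac1{N\theta}-1)v'(k_\infty^+)=\cm b_E'(k_\infty^+)$ equates a nonpositive with a nonnegative quantity, at least one strictly signed under (d)--(f). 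You should make that contradiction explicit --- ``either terminates or is infinite'' does not by itself establish $d(k)<\infty$. Also, your justification $v'(z)>0$ from ``$v\ge v_w$ and $v_w'>0$'' is invalid (a functional inequality does not transfer to derivatives); the correct source is the derivative lower bound on $\bar v'=v'$ in Lemma \ref{L:integral form}.

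Part (c) has a genuine gap. You compare $v_t'(k)$ with $v_m'(k)$ at the single point $k$ and then plan to ``propagate via supermodularity and a convexity-based contradiction.'' That propagation does not go through: as $k'$ increases past $k$, the strictly convex $v_m'$ grows, and a local inequality at $k$ gives no a-priori control that $v_t'$ outpaces it. The paper's key move is to compare $v_t'(k^+)$ not with $v_m'(k)$ but with $v_m'(\bar k)$, the supremum of $v_m'$ over $\bar K$ (attained at the top endpoint by convexity and monotonicity). Using (b) to bound the worker argmax by the manager type $z$, one has $v_m'(\bar k) \le N'\theta' b_L'((1-\theta')z^- + \theta'\bar k)$, while the envelope plus $v'(z^+) \ge v_m'(z^+) \ge N'\theta'b_L'(\theta'z^+)$ and hypothesis (c) yield
$v_t'(k^+) \ge N\theta(\cm b_E'(z^+) + N'\theta'b_L'(\theta'z^+)) \ge N'\theta'b_L'((1-\theta')z^-+\theta'\bar k) \ge v_m'(\bar k)$.
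Then for every $k'\in]k,\bar k[$ we get $v_t'(k') \ge v_t'(k^+) \ge v_m'(\bar k) > v_m'(k')$, the last by strict monotonicity of $v_m'$; combined with $v_t(k)=v(k)\ge v_m(k)$ and integration this gives $v_t>v_m$ on $]k,\bar k]$ with no propagation step at all. Replacing your local comparison with this global one is what closes the argument.
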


%
%

\begin{proof}
Lemma \ref{L:integral form} asserts convexity of $v_{w/m/t}$,
hence one-sided differentiability everywhere, and two sided-differentiability
except perhaps at countably many points. 
At points $k\in]0,\bar k[$ of differentiability,
Lemma \ref{L:inherit Lipschitz and semiconvex}
(the envelope theorem)
allow us to
estimate the 
wage gradients
\begin{eqnarray}
\ts v'_w(k)
=& (1-\theta') b_L'(z'(k,k'_m))
&\in (1-\theta')]\ub b_L',\bar b_L'[
\label{estimate_w}
\\   
\ts v'_m(k)
=& N'\theta' b_L'(z'(k_w',k))
&\in \phantom{(-} N'\theta']\ub b_L',\bar b_L'[,
\label{estimate_m}
\\ 
\ts v_t'\,(k)
=& N \theta(\cm b_E'(z(a,k))+ v'(z(a,k)) )
& \ge \phantom{(-} N\theta c b_E'(\theta k)
\label{estimate_t}
\end{eqnarray}
where $k_m',k_w'$ and $a$ are the respective points at which the suprema
\eqref{wage_w}--\eqref{wage_t} (or their extension to $\bar K$) are attained.
Such points exist in $\bar K$ according to the same lemma; we can extend
$v\left(\bar k\right) = v\left(\bar k^-\right) :=\lim_{k \uparrow \bar k} v(k)$
and $u\left(\bar k\right) \in \R \cup \{+\infty\}$ similarly
without changing $v_{w/m/t}$.
Consideration of the worst-case scenario $k'_w =0$ and $k'_m = \bar k$ 
in \eqref{estimate_w}--\eqref{estimate_m} shows 
if (b) holds 
that
$v_m'(k) < v_w'(k)$ at each point $k$ where both derivatives are defined.
Then the locally Lipschitz function $v_m-v_w$ is strictly increasing.
Since this function is non-positive on $\{k \mid \bar v=v_w\}$ and non-negative
on $\{k' \mid \bar v=v_m\}$, the first set must lie
entirely to the left of the second,  as desired.

Estimating the wage gradient for a teacher of type $k_0=k \in K$ is more subtle,
due to the recursive nature of formula \eqref{estimate_t}.  Since the student
of ability $a_1=a$ taught by $k_0$ winds up with cognitive skill
$k_1 = (1-\theta)a_1+\theta k_0 =z(a_1, k_0)$,  we find
\begin{eqnarray}\label{iterate 0}
v_t'(k_i)
&=& N\theta (\cm b_E'(k_{i+1}) + v'(k_{i+1}))
\end{eqnarray}
for $i=0$, assuming differentiability of $v_t$ at $k_0$.  Differentiability of $v$ and $b_E$
at $k_1$ (and also of $v_t \le v$) follows
from convexity, since replacing $k$ by $k_0$ produces equality in
$u(a_1) + \frac{1}{N}v_t(k) - v(z(a_1,k)) - \cm b_E(z(k,a_1)) \ge 0$: the first-order condition
\begin{eqnarray*}
(v'+\cm b_E') ({z(a_1,k_0)^-}) \ge & \frac{1}{N}v_t'(k_0)/z_k(a_1,k_0) & \ge 
(v'+ \cm b_E')(z(a_1,k_0)^+)
\end{eqnarray*}
forces the one-sided derivatives $(v'+cb_E')(k_1^-) \le (v'+c b_E')(k_1^+)$ to agree.
From \eqref{iterate 0} we have
$v_t'(k_0) \ge N\theta\cm \ub b_E'$,
which dominates $(1-\theta') \bar b_L'$ and $N'\theta' \bar b_L'$ in case (a).
Since $v_{w/m/t}$ are locally Lipschitz,
monotonicity of $v'(k)$ then combines with
the estimates \eqref{estimate_w}--\eqref{estimate_m} already established to
show all teacher types $k_0$ are at least as high as the highest worker and manager types.

From (d) $N\theta \ge 1$ and \eqref{iterate 0} we conclude $v'(k_1) \le v_t'(k_0)$,
and this inequality is strict if (e) also holds, in which case
every student studies with a teacher more skilled than himself, or --- what is equivalent
in our model --- no student (except the very top type $a=\bar a$)
becomes as skilled as his teacher.

Next, assume as in case (c), that a teacher of type $k \in K$ teaches
a student of type $a$ who becomes a manager of type 
$z = z(a,k)$.  Since $v \ge v_m$ with equality at $z$, we have 
$v'(z^+) \ge v_m'(z^+)$.
 Analogously to \eqref{estimate_m}--\eqref{estimate_t} we find
\begin{eqnarray*}
\frac{1}{N\theta}v_t'(k^+) &\ge& 
c b_E'(z^+) + v_m'(z^+)
\\ &\ge & cb_E'(z^+) + N'\theta' b_L'({(1-\theta')k_w + \theta' z^+}),
\\&\ge & cb_E'(z^+) + N'\theta' b_L'({\theta' z^+}) 
\end{eqnarray*}
with equality holding in the first two estimates if all the derivatives in question exist.
On the other hand, 
\begin{eqnarray*}
v_m'(\bar k) & \le & N'\theta' b_L'( (1-\theta')z^- + \theta'\bar k)
\end{eqnarray*}
since (b) implies the worker types all lie below the manager type $z$.
Hypothesis (c) now yields $v_t'(k^+) \ge v_m'(\bar k)$, and the convexity
of $v_t$ and strict convexity of $v_m$ shown as in Lemma \ref{L:integral form}
then imply $v_t' > v_m'$ on $]k,\bar k[$.  Vanishing of the non-negative function $f$
at $(a,k)$ implies $v_t(k) = v(k) \ge v_m(k)$, whence the desired conclusion 
$v_t> v_m$ follows on $]k,\bar k]$ by integration.



Case (f) is more delicate, and our conclusions for it are more involved.
If the student $a_1$ above elects to become a worker or manager, we can estimate
\eqref{iterate 0} using \eqref{estimate_w}--\eqref{estimate_m}.
However,  if the student becomes a teacher whose students' innate ability $a_2$ allows
them to acquire human capital $k_2 = z(a_2,k_1)$, we must iterate \eqref{iterate 0}.
And if these students in turn become teachers teaching students of ability $a_3$ to
acquire human capital $k_3 = z(a_3,k_2)$, we must iterate again,  and continue
iterating until the student of ability $a_d$ who acquires human capital
$k_d =z(a_d, k_{d-1})$ elects 
to become a worker or manager instead of another teacher.
Assuming 
(d)--(f),
we claim this occurs for some finite $d$:  otherwise
the skills $k_{i+1} < k_i$ converge to some $k_\infty \in K$, for which
the limit of \eqref{iterate 0} yields an identity 
$(\frac{1}{N\theta}-1)v'(k_\infty^+) =  \cm b_E'(k_\infty^+)$
equating quantities with different signs.
Recalling $v'(k_\infty^+) \ge 0$ and $\cm \ge 0$, hypothesis (f) asserts at
least one of these inequalities is strict,  while (d) asserts $N\theta \ge 1$.
Unless $N\theta=1$ and $\cm=0$,  this contradicts the limiting identity.
But $N\theta=1$ and $\cm=0$ contradicts (e).  Thus the
sequence $k_i$ terminates at some finite $d$ (which depends on $k_0$).

At this point we have
\begin{eqnarray}
\ts
v_t'(k)
&=& N\theta \Bigg(\cm b_E'(k_1) + N\theta \bigg(\cm b_E'(k_2) + N\theta\Big(...
+ N\theta\left(\cm b_E'(k_d) + v'(k_d)\right)
\Big)\bigg)\Bigg)
\cr&\ge& \left\{
\begin{array}{ll}
 \ds \frac{1-(N\theta)^d}{1-N\theta} 
N\theta c b_E'(\theta^dk) + \ts (N\theta)^{d}v'(\theta^d k) 
  & {\rm if}\ N\theta\ne 1\\
d \cm b_E'(\theta^d k) + v'(\theta^d k)  & {\rm if}\ N\theta=1,
\end{array}
\right.
\label{diverging sum}
\end{eqnarray}
where we have summed the geometric series and
estimated 
 $k_d \ge \theta^{d} k_0$.
%
\end{proof}

\subsection{Characterization of optimality}

When we turn to the question of existence of optimal payoffs $(u,v)$ for the
linear program \eqref{three line dual},  our strategy will be to perform
the minimization under the additional
assumption that $u$ and $v$ are convex non-decreasing,
 and then to show these additional constraints are non-binding at the optimum,  thus
 have no effect on the outcome.  Convexity and monotonicity provide the
requisite compactness for extracting limits from minimizing sequences.
In order to show these constraints are non-binding however, it is necessary to
 control the payoff $u(a)$ on the full interval $A=[0,\bar a[$,
 and not only on $\spt \alpha$.  Similarly it is necessary to control $v$ on the
  full interval $K=A$,  and not only on the support of the unknown distribution $\kappa$
  of adult skills. Since the original problem is largely insensitive to the values of
  $u$ and $v$ outside $\spt \alpha$ and $\spt \kappa$, 
we introduce a perturbed version of the problem to provide this control:
for each $\delta>0$ set
\begin{equation}\label{delta dual}
LP(\delta)_* :=
\inf_{( u, v) \in F_\delta} 
\delta \langle u + v \rangle_A + \int_{[0,\bar a]} u(a) \alpha(da)
\end{equation}
where $\langle v \rangle_A := \frac1{H^1(A)}\int_A v dH^1$ 
denotes the Lebesgue average
of $v$ over $A$.
Here $F_\delta=F_0$  denotes the same feasible set as before,
with a subscript 
denoting only the possible dependence of the constant
$c=c_\delta$ in \eqref{stability education} on $\delta>0$.
Also $u$ (and hence $v$) $ \in L^1(\bar A,\alpha)$, and if $\delta>0$ then
$u,v \in L^1(A,H^1)$. 
We must first solve the perturbed problem \eqref{delta dual}
and then extract the $\delta \to 0$ limit.  For the latter
endeavor  and to characterize the optimizers,
it will be crucial to know $LP(\delta)_*$ is in fact dual to 
\begin{equation}\label{delta primal}
LP(\delta)^*:=\max_{
\epsilon,\lambda \ge 0\ {\rm on}\ \bar A \times \bar K\ {\rm satisfying}\ \eqref{delta student marginal}-\eqref{delta steady state}
}
\cmd \epsilon(b_E \circ z) + \lambda(b_L \circ z')
\end{equation}
where 
\begin{equation}
\label{delta student marginal}
\epsilon^1 
= \alpha + \frac{\delta}{|A|} H^1|_{A} 
\end{equation}
and
\begin{equation}
\label{delta steady state}
\lambda^{1} + \frac{1}{N'} \lambda^{2} + \frac{1}{N} \epsilon^{2} 
= z_\#\epsilon + \frac{\delta}{|K|} H^1|_K.
\end{equation}

Let us begin by verifying $LP(\delta)^*\le LP(\delta)_*$.
This would be standard if the primal infimum 
were restricted to continuous bounded functions $u,v \in C(\bar A)$,
as in Appendix \ref{S:no duality gap} where the reverse inequality
and attainment of the dual maximum 
are verified.  
However, a priori we know only that $u,v$ differ from continuous bounded
functions by non-decreasing functions, and even a posteriori we do not know whether or not minimizers
of \eqref{three line dual} or \eqref{delta dual} are bounded at $\bar k$.
We have only the conditional result of Theorem \ref{T:phase transition}
to suggest that they are.  Thus we are forced to work in a space which
includes unbounded functions, and to check their inclusion does not
spoil the otherwise elementary duality inequality $LP(\delta)^* \le LP(\delta)_*$.


\begin{proposition}[Easy direction of duality for unbounded functions]
\label{P:easy duality}
Fix $\delta,\cmd$ non-negative and $\theta,\theta',N,N',\bar a=\bar k$ positive with 
$\max\{\theta,\theta'\}<1\le N$.
Let  $\alpha$ be a Borel probability measure on $\bar A$,
where $A=[0, \bar a[=K$, and define 
$z(a,k) =(1-\theta)a + \theta k$, $\bet = b_E \circ z$ and 
$\blt(a,k)=b_L((1-\theta')a+\theta'k)$ where 
$b_{E/L}\in C^0(\bar K)$. 
If Borel measures $(\epsilon,\lambda)$ and Borel functions
$(u,v)\in F_\delta$ are feasible for the primal and dual problems
\eqref{delta dual}--\eqref{delta primal},
with $u \in L^1(\bar A,\alpha)$ and $u\delta, v\delta \in L^1(A, H^1)$,
then 
$
\alpha(u) + \delta \langle u+v\rangle_A \ge \cmd \epsilon(\bet) + \lambda(\blt)
$ 
provided $v \in L^1(\bar A,z_\#\epsilon)$.
If $\alpha$ satisfies the doubling condition \eqref{doubling},
then $v \in L^1(\bar A,z_\#\epsilon)$.
\end{proposition}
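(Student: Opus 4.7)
The plan is to integrate the two stability inequalities \eqref{stability education} and \eqref{stability labor} against the primal measures $\epsilon$ and $\lambda$ respectively, then combine the results using the marginal constraint \eqref{delta student marginal} and the steady-state constraint \eqref{delta steady state}. Pairing \eqref{stability education} with $\epsilon$ yields
\[
\int u\,d\epsilon^{1} + \ts\frac{1}{N}\int v\,d\epsilon^{2} \ \ge\ \cmd\, \epsilon(\bet) + \int v\,dz_{\#}\epsilon,
\]
while pairing \eqref{stability labor} with $\lambda$ gives $\int v\,d\lambda^{1} + \frac{1}{N'}\int v\,d\lambda^{2} \ge \lambda(\blt)$. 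Since \eqref{stability bound} forces $u,v \ge 0$, every integral takes values in $[0,\infty]$, and the hypothesis $v \in L^{1}(z_{\#}\epsilon)$ is precisely what is needed to subtract $\int v\,dz_{\#}\epsilon$ from both sides of the first inequality without incurring an $\infty - \infty$ ambiguity. Adding the two estimates and substituting \eqref{delta student marginal}--\eqref{delta steady state} (so that $\lambda^{1} + \frac{1}{N'}\lambda^{2} + \frac{1}{N}\epsilon^{2} - z_{\#}\epsilon$ collapses to $\frac{\delta}{|K|}H^{1}|_{K}$ and $\epsilon^{1}$ expands as $\alpha + \frac{\delta}{|A|}H^{1}|_{A}$) then delivers the asserted bound, using $A = K$.

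For the second assertion, I would invoke the definition of $F_{\delta} = F_{0}$ to decompose $v = v_{0} + v_{1}$ with $v_{0} \in C(\bar A)$ bounded and $v_{1} \ge 0$ non-decreasing. Only $\int v_{1}\,dz_{\#}\epsilon$ needs attention, since $v_{0}$ is bounded and $z_{\#}\epsilon$ has finite total mass. The layer-cake representation reduces the problem to a uniform bound on the tails $z_{\#}\epsilon((s,\bar k])$, because monotonicity of $v_{1}$ makes each of its super-level sets a right half-interval.

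The geometric key is that $z(a,k) = (1-\theta)a + \theta k > s$ forces $a > (s-\theta\bar k)/(1-\theta)$, so writing $\Delta := \bar a - s$ and using $\bar a = \bar k$ I obtain
\[
z_{\#}\epsilon\bigl((s,\bar k]\bigr) \ \le\ \epsilon^{1}\bigl((\bar a - \Delta/(1-\theta),\bar a]\bigr) \ \le\ \alpha\bigl((\bar a - \Delta/(1-\theta),\bar a]\bigr) + \ts\frac{\delta\, \Delta}{(1-\theta)|A|}.
\]
Iterating \eqref{doubling} $m := \lceil \log_{2}(1/(1-\theta))\rceil$ times absorbs the dilation factor $1/(1-\theta) \le 2^{m}$ into a multiplicative constant, giving $\alpha((\bar a - \Delta/(1-\theta),\bar a]) \le C^{m}\alpha((s,\bar a])$. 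Integrating back through the layer-cake identity bounds $\int v_{1}\,dz_{\#}\epsilon$ by a constant times $\int v_{1}\,d\alpha + \delta\int v_{1}\,dH^{1}|_{A}$. Both remain finite: the $H^{1}$ term uses the hypothesis $v\delta \in L^{1}(A,H^{1})$, while for the $\alpha$ term I would invoke the stability bound \eqref{stability bound} to get $v \le \frac{N}{N-1}u$, so that $u \in L^{1}(\alpha)$ forces $\alpha(v_{1}) \le \alpha(v) + \|v_{0}\|_{\infty} < \infty$. I expect the doubling iteration, combined with this integrability bootstrap, to be the main step; the rest of the argument is algebraic manipulation of the marginal and steady-state constraints.
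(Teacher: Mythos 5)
Your proposal is correct and follows essentially the same route as the paper: integrate the two stability constraints against $\epsilon$ and $\lambda$, use the marginal and steady-state identities, subtract $\int v\,dz_{\#}\epsilon$ using its assumed finiteness, and then prove that finiteness from the doubling condition by combining a tail bound with the layer-cake formula and the stability-bound inequality $v\le\frac{N}{N-1}u$. The one place you are actually slightly more careful than the paper is the treatment of the $\delta$-perturbation: because the left marginal of $\epsilon$ in the perturbed problem is $\alpha+\frac{\delta}{|A|}H^{1}|_{A}$ rather than $\alpha$ alone, the tail bound picks up an extra Lebesgue term proportional to $\delta\Delta$, which you control using the hypothesis $v\delta\in L^{1}(A,H^{1})$; the paper instead invokes Lemma~\ref{L:adult tail bound} (stated for $\epsilon^{1}=\alpha$) and folds the harmless $\delta$-part in silently. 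Your choice of iteration count $m=\lceil\log_{2}(1/(1-\theta))\rceil$ is also a perfectly valid alternative to the paper's constant $C^{1/\theta-1}$. No gaps.
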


\begin{proof}
Taking feasible pairs $(\epsilon,\lambda)$ of measures and $(u,v) \in F_\delta$ of functions
with $u \in L^1(\bar A,\alpha)$ and $u\delta,v\delta \in L^1(A,H^1)$,
the stability constraint for the education sector implies
\begin{equation}\label{L1 upper bound}
u(a) - \cmd b_E(z(a,k)) \ge \ts  v(z(a,k))-\frac{1}{N}v(k),
\end{equation}
on $\bar A \times \bar K$, and the left hand side is in $L^1(\bar A^2,\epsilon)$.
Thus
\begin{eqnarray}
\nonumber
+\infty &>& \alpha(u) - \cmd\epsilon(\bet) + \delta \langle u+v \rangle_{A}
\\ &\ge& \langle \delta v \rangle_{K} + \int_{\bar A \times \bar K}
[v(z(a,k))-\ts \frac1N v(k) ] \epsilon(da,dk)
\label{upper bound}
\end{eqnarray}
since $\epsilon^1=\alpha + \frac\delta{|A|} H^1|_A$. On the other hand, the steady state constraint
$z_\#\epsilon + \frac\delta{|K|} H^1|_K= \lambda^1 + \frac1{N'} \lambda^2 + \frac1N \epsilon^2$
combines with the stability constraint $v(a)+ \frac1{N'} v(k) \ge \blt(a,k)$
for the labor sector to imply
\begin{eqnarray}
\label{lower bound}
\langle \delta v \rangle_K + \int_{\bar K} v d(z_\#\epsilon - \ts\frac{1}{N} \epsilon^2\ds)
&=& \int_{\bar K} v d(\lambda^1 + \ts\frac{1}{N'} \lambda^2)\ds
\\ &\ge& \int_{\bar A \times \bar K} \blt d\lambda
\label{next bound}
\\ \nonumber &>& 0.
\end{eqnarray}
Now if $v \in L^1(\bar A, z_\#\epsilon)$  we can equate the right hand side of
\eqref{upper bound} with the left hand side of \eqref{lower bound} 
to obtain the first stated conclusion. 

We must still show that the doubling \eqref{doubling} of $\alpha$ at $\bar a$
implies $0 \le v \in L^1(\bar A,z_\#\epsilon)$.
Recall that $(u,v) = (u_0+u_1,v_0+v_1)$ with $u_0,v_0 \in C(\bar A)$ and 
$u_1,v_1:\bar A \longrightarrow [0,\infty]$ non-decreasing (in fact strictly increasing
without loss of generality). Since $v_0$ is bounded there is no question about its
integrability.  We shall use $v \le u$ from \eqref{stability bound} and $u \in L^1(\bar A,\alpha)$ 
to deduce $v_1 \in L^1(\bar A,\kappa)$ for $\kappa := z_\#\epsilon$.  
Since $v_1$ is strictly increasing,  $v_1^{-1}(y) \in \R \cup\{\pm \infty\}$ can be defined unambiguously.
Lemma \ref{L:adult tail bound}, the doubling condition \eqref{doubling},
and the layer-cake representation \cite{LiebLoss97} of the Lebesgue integral imply
\begin{eqnarray}
\int_{\bar K} 
v_1(k) \kappa(dk) 
&=& \int_0 
^\infty \kappa[ v_1^{-1}[y,\infty]]
dy\cr
&=& \int_0^\infty \kappa[\bar a - (\bar a -v_1^{-1}(y)),\bar a] dy\cr
&\le& \int_0^\infty \alpha[\bar k - \frac{1}{1-\theta}(\bar a - v_1^{-1}(y)),\bar a] dy \cr
&\le& C^{\frac{1}{\theta}-1} \int_0^\infty \alpha[v_1^{-1}(y),\bar a] dy
\label{kappa(v) finite}
\end{eqnarray}
for some $C<\infty$.  On the other hand,
$v_1 \le u_0 + u_1 - v_0 \le u_1+ const$ 
yields $u_1^{-1}(y-const) \le v_1^{-1}(y)$,
so 
$$
 \int_{0}^\infty \alpha[u_1^{-1}(y),\bar a] dy = \int u_1(a) \alpha (da) < +\infty
 $$
implies finiteness of \eqref{kappa(v) finite} and completes the proof that $v_1 \in L^1(\bar K,\kappa)$.
%
\end{proof}

\begin{corollary}[Characterizations of optimality]\label{C:characterize}
Fix $\delta,\cmd,\theta,\theta',N,N',z,\bet,\blt$ and $\bar k = \bar a$ as in Proposition
\ref{P:easy duality},
and a Borel probability measure $\alpha$ on $\bar A$ satisfying \eqref{doubling},
where $A=[0, \bar a[=K$ and $\bar a \in \spt \alpha$.
A pair of feasible measures $\epsilon,\lambda \ge 0$ on $\bar A^2$ maximizes the dual
problem \eqref{delta primal} if there exist feasible $(u,v) \in F_\delta$
such that $\alpha(u) + \delta \langle u+v \rangle_A = \cmd \epsilon(\bet) + \lambda(\blt)$.

Conversely,  $(u,v) \in F_\delta$ 
minimize the primal problem
if and only if there exist $\epsilon,\lambda \ge 0$ feasible for the dual
problem
such that $\alpha(u) + \delta \langle u+v\rangle_A= \cmd\epsilon(\bet) + \lambda(\blt)$.

For feasible pairs in the given spaces,
$\alpha(u) + \langle u+v\rangle_A= \cmd \epsilon(\bet) + \lambda(\blt)$
is equivalent to the assertions $\epsilon(f)=0=\lambda(g)$
where $f(a,k) = u(a) + \frac{v(k)}{N} - \cmd \bet(a,k) - v(z(a,k)) \ge 0$
and $g(k',k) = v(k') + \frac{v(k)}{N} - \blt(k',k) \ge 0$ on $\bar A \times \bar K$.
\end{corollary}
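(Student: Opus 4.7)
The strategy is to deduce all three claims from the weak duality bound
$\alpha(u) + \delta\langle u+v\rangle_A \ge \cmd \epsilon(\bet) + \lambda(\blt)$
supplied by Proposition \ref{P:easy duality}, supplemented in one direction by the no-duality-gap statement proved in Appendix \ref{S:no duality gap}. The \emph{if} parts of the first two claims follow directly: should some specific feasible quadruple achieve equality in the weak duality bound, then testing the bound against any competing feasible $(\epsilon',\lambda')$ gives $\cmd \epsilon'(\bet) + \lambda'(\blt) \le \alpha(u)+\delta\langle u+v\rangle_A = \cmd\epsilon(\bet)+\lambda(\blt)$, certifying $(\epsilon,\lambda)$ as a maximizer of \eqref{delta primal}; the symmetric test against any feasible $(u',v')$ certifies $(u,v)$ as a minimizer of \eqref{delta dual}.

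For the remaining \emph{only if} direction in the minimization claim, I invoke Appendix \ref{S:no duality gap}, which --- under the doubling hypothesis \eqref{doubling} --- both establishes $LP(\delta)_* = LP(\delta)^*$ and exhibits a maximizer $(\epsilon,\lambda)$ of \eqref{delta primal} via weak-$\ast$ compactness and upper semicontinuity of $\epsilon \mapsto \epsilon(\bet)$ and $\lambda \mapsto \lambda(\blt)$. If $(u,v)$ minimizes \eqref{delta dual}, this pair automatically satisfies $\alpha(u)+\delta\langle u+v\rangle_A = LP(\delta)_* = LP(\delta)^* = \cmd \epsilon(\bet) + \lambda(\blt)$, delivering the required matching measures.

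For the final complementary-slackness equivalence, I revisit the chain of inequalities inside the proof of Proposition \ref{P:easy duality}. That chain has exactly two sources of slack: the nonnegative integrand $f$ integrated against $\epsilon$ (from the educational stability constraint) and the nonnegative integrand $g$ integrated against $\lambda$ (from the labor stability constraint); the intervening manipulations convert the marginal identity \eqref{delta student marginal} and the steady-state identity \eqref{delta steady state} into equalities with no slack. Telescoping the chain yields
$$\alpha(u) + \delta\langle u+v\rangle_A - \cmd\epsilon(\bet) - \lambda(\blt) = \epsilon(f) + \lambda(g),$$
so the joint equality of objectives is equivalent to the two nonnegative integrals $\epsilon(f)$ and $\lambda(g)$ jointly vanishing, i.e.\ to $\epsilon(f)=0=\lambda(g)$.

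The one technical concern is that the telescoping must be a genuine identity of finite real numbers rather than a formal $+\infty-\infty$ manipulation. This is precisely the integrability content already secured by Proposition \ref{P:easy duality}, where the doubling hypothesis \eqref{doubling} was used to force $v \in L^1(\bar A, z_\#\epsilon)$; everything else is routine bookkeeping. In this sense the real work has been absorbed into Proposition \ref{P:easy duality} and the appendix, and the corollary is essentially a combination of those two ingredients.
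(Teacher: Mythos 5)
Your proposal is correct and follows essentially the same route as the paper's own proof: the "if" directions come from testing the weak duality bound of Proposition \ref{P:easy duality} against arbitrary competitors, the "only if" direction for the minimization is obtained by combining the no-duality-gap result (Theorem \ref{T:no gap}) with the existence of a maximizer (Lemma \ref{L:primal existence}), and the complementary-slackness equivalence is read off from identifying $\epsilon(f)$ and $\lambda(g)$ as the only sources of slack in the chain of inequalities \eqref{upper bound}--\eqref{next bound}. Your explicit telescoping identity $\alpha(u)+\delta\langle u+v\rangle_A - \cmd\epsilon(\bet) - \lambda(\blt) = \epsilon(f)+\lambda(g)$ is a slightly tidier packaging of the paper's argument, and you correctly flag that the integrability $v\in L^1(\bar A,z_\#\epsilon)$ secured by the doubling hypothesis is what makes the bookkeeping legitimate.
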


\begin{proof}
Let $(\epsilon,\lambda)$ be a pair of feasible measures for the
dual problem,  and $(u,v) \in F_\delta$ 
so that $u \in L^1(\bar A,\alpha)$ and $u\delta,v\delta \in L^1(A,H^1)$
and $f,g \ge 0$ when defined as above.
Then Proposition \ref{P:easy duality} asserts $v \in L^1(\bar A, z_\#\epsilon)$ and
$\cmd\epsilon(\bet) + \lambda(\blt) \le LP(\delta)^* \le LP(\delta)_*
 \le \alpha(u) +\delta \langle u + v\rangle_A$.
If $\cmd \epsilon(\bet) + \lambda(\blt) = \alpha(u) + \delta \langle u + v\rangle_A$ this
forces this chain of inequalities to become equalities,
showing $(\epsilon,\lambda)$ and $(u,v)$ to optimize their
respective problems.

The converse is proved using the result $LP(\delta)^* = LP(\delta)_*$,
which follows by combining the same proposition with Theorem~\ref{T:no gap}.
Suppose $\alpha(u) + \delta \langle u + v\rangle_A =LP(\delta)_*$,  meaning
$(u,v)$ is optimal.  Lemma \ref{L:primal existence} provides
$(\epsilon,\lambda)$ such that $\cmd \epsilon(\bet) + \lambda(\blt)=LP(\delta)^*$.


Finally, we claim that
$\cmd \epsilon(\bet) + \lambda(\blt)=\alpha(u) + \delta \langle u + v\rangle_A$
is equivalent
to $\epsilon(f)=0=\lambda(g)$.
This follows from the chain of inequalities which establish
$c\epsilon(\bet) + \lambda(\blt) \ge \alpha(u)$ in Proposition \ref{P:easy duality}:
$\epsilon(f)=0$ is equivalent to equality in \eqref{upper bound},
$\lambda(g)=0$ is equivalent to equality in \eqref{next bound}, and
when both of these hold then
$\cmd\epsilon(\bet) + \lambda(\blt)=\alpha(u) + \delta \langle u + v\rangle_A$.
\end{proof}

\begin{remark}[Converse]\label{R:characterize}
According to Theorems \ref{T:minimizing wages} and \ref{T:no gap}, the sufficient condition
for optimality of $(\epsilon,\lambda)$ given by Corollary \ref{C:characterize}
is also necessary.
\end{remark}

\subsection{Optimal wages for the primal problem}

Using the foundations laid in the previous sections,  we are
ready to demonstrate the existence of optimal wages $v(k)$
and payoffs $u(a)$ for the primal problem \eqref{three line dual}.
This is done using a compactness and (lower semi-)continuity argument
for the 
perturbed problem \eqref{delta dual},  and then
taking the limit $\delta \to 0$.  For $\delta>0$, we assume $v$
is convex  nondecreasing,
and then use Lemma \ref{L:integral form} and the characterization
$v=\max\{v_w,v_w,v_t\}$ --- which identifies the wage of an ability $k$ adult
with the maximum he can earn as a worker, manager or teacher ---
to show the convexity and monotonicity
assumptions on $v$ do not bind,  so play no role in the outcome of our
(infinite-dimensional) linear program.
Thus convexity of the wages in our model emerges
for reasons which manifest rather differently 
than in Rosen's investigation of superstars~\cite{Rosen81}.

Compactness for convex non-decreasing $v$ is asserted in the
following lemma.  Some delicacy is required to show that if $v$ or $u$
diverges to $+\infty$,  then both do so on the same half-open interval,
and at a uniform rate.


\begin{lemma}[Compactness for wage functions]\label{L:compactness}
Fix $K=[0,\bar k[$ and $g \in L^1_{loc}(K)$.
A sequence $v_i:K \longrightarrow [0,\infty[$ of convex non-decreasing
functions 
satisfying $v_i''(k) 
\ge g(k)$ a.e.,
admits a subsequence which converges pointwise to a limit
$v_0 : K \longrightarrow [0,\infty]$ which is
real valued on $[0, k_0[$, and infinite on $]k_0,\bar k[$, for some
$k_0 \in [0,\bar k]$.  The convergence is uniform on compact subsets
of $[0,k_0[$, and the analogous bound 
$v_0''(k) \ge g(k)$ holds a.e.\ on 
its interior.  Furthermore,
 for $a> k_0$
$$
u_i(a) := \max_{k \in [0, \bar k[}
\ts \cm b_E(z(a,k)) + v_i(z(a,k)) - \frac{1}{N} v_i(k)
$$
diverges to $u_0(a) = +\infty$ as $i \to \infty$ along the subsequence
described above, where $b_E \in C^1(\bar K)$ satisfies 
\eqref{utility bound 0}-\eqref{utility bound 2}, $c\ge 0$ and $z(a,k) = (1-\theta)a + \theta k$.
\end{lemma}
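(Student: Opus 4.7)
My plan is to extract a Helly-type subsequence by diagonalization across a countable dense subset of $K$, use monotonicity of the $v_i$ to pinpoint the threshold $k_0$, use convexity to secure equi-Lipschitz bounds on compact subsets of $[0, k_0[$, and finally test the supremum defining $u_i(a)$ at a convenient point to establish divergence. Enumerating the rationals $\{q_j\}_{j \ge 1}$ in $K$, I would diagonally extract a subsequence along which, for each $j$, either $v_i(q_j)$ converges in $[0, \infty)$ or $v_i(q_j) \to +\infty$. Setting $k_0 := \sup\{q_j : \sup_i v_i(q_j) < \infty\}$ (with the convention $\sup \emptyset = 0$), monotonicity immediately propagates this dichotomy to every point of $K$: for $k < k_0$ a rational sandwiched between $k$ and $k_0$ bounds $v_i(k)$ from above, while for $k > k_0$ a rational sandwiched between $k_0$ and $k$ forces $v_i(k) \to +\infty$.

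On any compact $[0, k_1] \subset [0, k_0[$, picking an auxiliary $k_2 \in \, ]k_1, k_0[$, the secant-slope bound from convexity combined with monotonicity yields $v_i'(k) \le v_i(k_2)/(k_2 - k_1)$ for a.e.\ $k \in [0, k_1]$, uniformly in $i$. The Arzel\`a--Ascoli theorem together with pointwise convergence on the dense rationals of $[0, k_1]$ then upgrades this to uniform convergence on compacts of $[0, k_0[$ toward a limit $v_0$, to which convexity and monotonicity pass by pointwise limit, and the distributional bound $v_i'' \ge g$ passes via weak convergence of the non-negative Radon measures $v_i''$ (tested against non-negative bump functions $\phi \in C_c(]0, k_0[)$). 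Extending $v_0 \equiv +\infty$ on $]k_0, \bar k[$ completes the description of the pointwise limit.

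For the divergence of $u_i$ at $a > k_0$, I would test the supremum defining $u_i(a)$ at the diagonal point $k = a$: since $z(a, a) = a$, this gives
$$ u_i(a) \ge c b_E(a) + (1 - \tfrac{1}{N}) v_i(a), $$
and since $a > k_0$ forces $v_i(a) \to +\infty$ along the subsequence while $N > 1$ makes the prefactor positive, this delivers $u_i(a) \to +\infty$. (If only $N \ge 1$ is available, the same conclusion follows by instead choosing any $k \in [0, k_0[$ close enough to $k_0$ so that $z(a,k) > k_0$; then $v_i(z(a,k)) \to +\infty$ while $v_i(k)$ stays bounded.) The principal subtlety throughout is ensuring that a single subsequence simultaneously provides uniform convergence on compacts of $[0, k_0[$ and pointwise divergence on $]k_0, \bar k[$, which is exactly what the monotonicity-driven propagation of the bounded/unbounded dichotomy across the countable dense set achieves.
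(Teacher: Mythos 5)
Your proof is correct, and it takes a genuinely different (though equally standard) route from the paper's. The paper applies Helly's selection theorem to the non-decreasing \emph{derivatives} $v_i'$, reconstructs the limit via the fundamental theorem of calculus $v_i(k') = v_i(0) + \int_0^{k'}v_i'$ and dominated convergence, and sets $k_0 = \sup\{k : v_0'(k) < \infty\}$. You instead work at the level of $v_i$ itself: diagonalize over the rationals of $K$, let $k_0$ be the supremum of the rationals at which the subsequence stays bounded, propagate the bounded/divergent dichotomy via monotonicity, and upgrade to locally uniform convergence on $[0,k_0[$ via the convexity-derived equi-Lipschitz bound $v_i'(k) \le v_i(k_2)/(k_2-k_1)$ and Arzel\`a--Ascoli. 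Both are Helly-type arguments and yield the same conclusion; the distributional inheritance $v_0''\ge g$ is treated the same way in each. For the divergence of $u_i$ at $a>k_0$, the paper tests at $k=k_0$ and writes an inequality that, as printed, is a typo (it drops the $-\frac1N v_{i(j)}(k_0)$ term and misplaces the factor $c$); repairing it requires $N>1$ together with the monotonicity $v_{i(j)}(k_0)\le v_{i(j)}(z(a,k_0))$. Your choice $k=a$, giving $u_i(a) \ge cb_E(a) + (1-\frac1N)v_i(a)$, is cleaner and makes the role of $N>1$ explicit. Your fallback for $N\ge 1$ (testing at $k\in[0,k_0[$ near $k_0$) is a nice refinement, though it silently requires $k_0>0$; the corner case $k_0=0$, $N=1$, $v_i(0)\to\infty$ is not covered by either your argument or the paper's as written, but it is irrelevant since the model always has $N>1$.
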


\begin{proof}
The fundamental theorem of calculus yields
\begin{equation}\label{i form}
v_i(k') = v_i(0) + \int_{0}^{k'} v_i'(k) dk.
\end{equation}
Since $0 \le v_i'(k)$ is non-decreasing for each $i$,
Helly's selection theorem provides
a subsequence converging to a non-decreasing limit $v_0'(k)$ on $K$,  except possibly
at discontinuities of $v_0'$ in $]0,\bar k[$.
Choose a further subsequence for which
$v_0(0) := \lim_{j \to \infty} v_{i(j)}(0)$ converges;
unless such a sequence exists, $v_0(0) = +\infty$ and the lemma
follows immediately with $k_0 = 0$.  Therefore assume $v_0(0) <\infty$
and choose $k_0\in [0,\bar k]$ so that
$v_0'(k) <\infty$ for $k< k_0$ and $v_0'(k)=\infty$ for $k>k_0$.
For $k'<k_0$,  Lebesgue's dominated convergence theorem allows us to
take $i(j) \to \infty$ in \eqref{i form},  to obtain a continuous limit $v_0(k')$
on $[0, k_0[$.  It follows that $v_{i(j)} \to v_0$ uniformly on compact subsets
of $[0, k_0[$.  Monotonicity of $v_i'$ ensures $v_{i(j)}(k) \to \infty$ for each
$k> k_0$.  For $v_i,g \in L^1_{loc}$,  the inequality
$v_i'' \ge g$ holds in the distributional sense --- meaning
\begin{equation}\label{distributional}
\int_{0}^{\bar k} [f''(k)v_i(k) - f(k) g(k) ]dk \ge 0
\end{equation}
for each smooth compactly supported test function $0 \le f \in C^\infty_c(]0,\bar k[)$
--- if and only if it holds in
the a.e.\ sense.  Thus $v_i'' \ge g$ distributionally, and  the bound
$v_0'' \ge g$ follows on $]0, k_0[$,  using Lebesgue's dominated convergence
theorem again.  Taking $g=0$ shows $v_0$ is convex on $]0,k_0[$, for example.

Now if $a > k_0$,  taking $k=k_0$ implies $k_0<z(a,k)=(1-\theta)a+\theta k$,  thus
$u_{i(j)}(a) \ge \cm v_{i(j)}(z(a,k_0))$ diverges to $+\infty$ as $j \to \infty$.
\end{proof}

\begin{corollary}[Convergence uniform from below]\label{C:uniform from below}
Suppose a sequence $v_i:[0, \bar k[ \longrightarrow [0,\infty[$
of functions satisfying the hypotheses of Lemma \ref{L:compactness}
converges pointwise to $v_0 : [0, \bar k[ \longrightarrow [0,\infty]$ which is
real valued on $[0, k_0[$, and infinite on $]k_0,\bar k[$ for some $k_0 \in [0,\bar k]$.
If $v_0(k_0^-):= \lim_{k \uparrow k_0} v_0(k)<+\infty$ then
\begin{equation}\label{one-sided}
0 \le \liminf_{i\to \infty} \inf_{k \in [0,k_0[} v_i(k) - v_0(k).
\end{equation}
On the other hand,  if $v_0(k_0^-) = +\infty$ then the sequence grows uniformly in the
sense that for each $c<\infty$ taking $i'<\infty$ large enough implies
$v_i(k) \ge c$ for all $k>k_0 -1/i'$ and $i>i'$.
\end{corollary}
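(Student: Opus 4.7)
The plan is to exploit two pieces of structure already in hand: first, Lemma \ref{L:compactness} delivers uniform convergence of $v_i$ to $v_0$ on every compact subset of $[0,k_0[$; second, the monotonicity of each $v_i$ (and of $v_0$) lets us transfer control from an interior point $k' < k_0$ out to the whole interval $[k',k_0[$, where uniform convergence is not available.

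For the first assertion (the case $v_0(k_0^-)<\infty$), I would fix $\varepsilon>0$ and choose $k' \in [0,k_0[$ close enough to $k_0$ that $v_0(k_0^-) - v_0(k') < \varepsilon/2$; this is possible because $v_0$ is non-decreasing on $[0,k_0[$ with finite left-limit $v_0(k_0^-)$. By the uniform convergence on $[0,k']$, for all sufficiently large $i$ we have $v_i \ge v_0 - \varepsilon/2$ on $[0,k']$, and in particular $v_i(k') \ge v_0(k') - \varepsilon/2 \ge v_0(k_0^-) - \varepsilon$. For $k \in [k',k_0[$ monotonicity gives $v_i(k) \ge v_i(k')$ while $v_0(k) \le v_0(k_0^-)$, so
\[
v_i(k) - v_0(k) \;\ge\; v_i(k') - v_0(k_0^-) \;\ge\; -\varepsilon.
\]
Combined with the $\varepsilon/2$ bound on $[0,k']$, this yields $\inf_{k\in[0,k_0[}(v_i(k)-v_0(k)) \ge -\varepsilon$ for all $i$ sufficiently large; letting $\varepsilon \to 0$ gives \eqref{one-sided}.

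For the second assertion (the case $v_0(k_0^-) = +\infty$), I would fix $c<\infty$ and pick $k' < k_0$ with $v_0(k') > c$; such $k'$ exists because $v_0$ is non-decreasing and unbounded on $[0,k_0[$. Pointwise convergence at the single point $k'$ (itself a consequence of the uniform convergence on $[0,k']$) yields some $i_0$ with $v_i(k') \ge c$ for all $i > i_0$. Now take any integer $i' \ge \max\{i_0,\, \lceil 1/(k_0-k')\rceil\}$, so that $k_0 - 1/i' \ge k'$. Then whenever $i > i'$ and $k > k_0 - 1/i'$, we have $k \ge k'$, and the monotonicity of $v_i$ gives $v_i(k) \ge v_i(k') \ge c$, as required.

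The only potentially delicate point is the handling of the strip $[k',k_0[$ on which uniform convergence of $v_i$ to $v_0$ can genuinely fail (indeed, in Case 2 the $v_i$ can blow up there). Monotonicity, which is preserved by the pointwise limit, is precisely what rescues the argument, so both cases follow by essentially the same trick: pin down $v_i$ at a well-chosen interior point $k'$ using the uniform convergence from Lemma \ref{L:compactness}, and then propagate to the right using monotonicity.
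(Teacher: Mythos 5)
Your proof is correct and follows essentially the same route as the paper's: anchor $v_i$ at a well-chosen point $k'$ near $k_0$ using (uniform or pointwise) convergence from Lemma \ref{L:compactness}, then propagate the bound rightward to $[k',k_0[$ via monotonicity. The only cosmetic difference is that in the second case you fix $k'$ first and then enlarge $i'$ to ensure $k_0-1/i'\ge k'$, which is a slightly cleaner bookkeeping of the same estimate.
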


\begin{proof}
Given $\delta>0$,  taking $k_1<k_0$ sufficiently large makes $v_0(k_1)> v_0(k_0^-)-\delta/2$.
Taking $i$ sufficiently large then ensures $v_i(k_1)>  v_0(k_0^-)-\delta$,
whence for all $k \in [k_1,k_0[$ monotonicity yields $v_i(k) > v_0(k) -\delta$.
Since the convergence $v_i \to v_0$ is uniform on $[0,k_1]$,
this concludes the corollary in case $v_0(k_0^-)<+\infty$ is finite.

If $v_0(k_0^-)=+\infty$,  given $c<\infty$ take $i'$ sufficiently large that
$v_0(k_0-1/i')>c$ and then larger still to ensure $v_i(k_0-1/i')>c$ for all $i>i'$.
Monotonicity again concludes the proof.
\end{proof}




\begin{theorem}[Existence of minimizing wages]\label{T:minimizing wages}
Fix $\cm\ge0$ and positive $\theta,\theta', N,N'$ and $\bar a=\bar k$ 
with $\max\{\theta,\theta'\}<1\le N$.  Set  $A=[0, \bar a[=K$ and
let  $\alpha$ be a Borel probability measure on $\bar A$ satisfying the doubling
condition \eqref{doubling} at $\bar a \in \spt \alpha$. Define
$z(a,k) =(1-\theta)a + \theta k$, $\bet=b_E\circ z$ and $\blt(a,k)=b_L((1-\theta')a+\theta'k)$,
where $b_{E/L} \in C^1(\bar K)$ satisfy \eqref{utility bound 0}--\eqref{utility bound 2}.
Then infimum \eqref{three line dual} is attained by functions
$(u,v)$ satisfying
$v = \max\{v_w, v_m, v_t\}$ on $\bar K=[0,\bar k]$ and
\begin{equation}\label{student wage}
u(a) = \sup_{k \in \bar K} \ts \cm b_E(z(a,k)) + v(z(a,k))- \frac{1}{N} v(k)
\end{equation}
on $\bar A$, 
where the $v_{w/m/t}$ are defined
by \eqref{wage_w}--\eqref{infinity convention};
here $u,v:\bar A \longrightarrow ]0,\infty]$ are continuous, convex, non-decreasing,
and --- except perhaps at $\bar a$ --- real-valued.
For 
$j\in \{1,2\}$,
 if $N\theta^j \ge 1$ 
 then $d^jv/dk^j \ge \ub b_L^{(j)}\min\{(1-\theta')^j,(\theta')^jN'\}$. 

\end{theorem}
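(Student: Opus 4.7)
The plan is to first establish existence for the $\delta$-perturbed problem \eqref{delta dual} within the subclass of convex non-decreasing pairs, then show this convexity--monotonicity constraint is non-binding, and finally pass to the limit $\delta \to 0$.

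For fixed $\delta > 0$, take a minimizing sequence $(u_i, v_i)$ of convex non-decreasing pairs. Replacing each $(u_i, v_i)$ by the pair produced by applying \eqref{wage_w}--\eqref{wage_t} and \eqref{student wage} to $v_i$, we may assume without loss of generality that $v_i = \max\{v_{w,i}, v_{m,i}, v_{t,i}\}$ and $u_i$ is given by \eqref{student wage}; this replacement cannot increase the objective, since feasibility forces $v_i \ge \max\{v_{w,i}, v_{m,i}, v_{t,i}\}$ and the replaced $u_i$ is pointwise smaller. Lemma \ref{L:integral form} then yields uniform pointwise a.e.\ lower bounds on $v_i''$ and $u_i''$, while the bounded objective provides $L^1(A, H^1)$-control near $0$. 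Lemma \ref{L:compactness} extracts a pointwise convergent subsequence with limit $(u_0, v_0)$, real-valued on some $[0, k_0[$ and infinite on $]k_0, \bar k[$. Corollary \ref{C:uniform from below} ensures feasibility \eqref{stability education}--\eqref{stability labor} passes to the limit (the inequalities being vacuous where $v_0 = +\infty$), and lower semi-continuity of the objective follows from Fatou's lemma combined with the same corollary; thus $(u_0, v_0)$ attains the constrained infimum.

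The delicate step is to show the convex--monotone constraint is non-binding. Feasibility already forces $v \ge \bar v := \max\{v_w, v_m, v_t\}$ pointwise (with $v_{w/m/t}$ computed from $v$ itself), and the perturbed objective \eqref{delta dual} contains the strictly increasing term $\delta \langle v \rangle_A$. Hence any minimizer $(u_0, v_0)$ must in fact satisfy $v_0 = \bar v_0$ and $u_0 = $ \eqref{student wage} Lebesgue-a.e.; otherwise one could decrement $v_0$ toward $\bar v_0$ (together with the corresponding decrement of $u_0$) to produce a strictly better feasible pair. The main obstacle is that $\bar v_0$ depends on $v_0$ recursively through $v_{t,0}$, so the decrement must be iterated, yielding a monotone decreasing sequence $v_0 \ge v^{(1)} \ge v^{(2)} \ge \cdots$ which converges to a feasible fixed point. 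By Lemma \ref{L:integral form} this fixed point is automatically convex non-decreasing, so the constraint does not bind and $(u_0, v_0)$ minimizes the unrestricted $LP(\delta)_*$.

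For $\delta_n \downarrow 0$, the minimizers $(u_n, v_n)$ satisfy $\alpha(u_n) \le LP(\delta_n)_* \le LP_* + \delta_n \langle \tilde u + \tilde v\rangle_A$ for any $(\tilde u, \tilde v) \in F_0$, so Lemma \ref{L:compactness} and Corollary \ref{C:uniform from below} yield a feasible limit $(u, v)$ attaining $LP_* = \alpha(u)$, with the fixed-point identities preserved by pointwise convergence. For the derivative bounds, where $v = v_w$ or $v = v_m$, the envelope estimates of Lemma \ref{L:inherit Lipschitz and semiconvex} give $v^{(j)} \ge (1-\theta')^j \ub b_L^{(j)}$ or $(\theta')^j N' \ub b_L^{(j)}$ respectively; where $v = v_t$ at some $k$, differentiating \eqref{wage_t} yields $v^{(j)}(k) \ge N\theta^j(\cm \ub b_E^{(j)} + v^{(j)}(k^*))$ with $k^* = z(a^*, k) < k$. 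When $N\theta^j \ge 1$, iterating this recursion on $k^*$ terminates in one of the first two cases by the finite-descent argument of Proposition \ref{P:specialization}(f), and the accumulated factor $(N\theta^j)^d \ge 1$ preserves the asserted lower bound $\ub b_L^{(j)} \min\{(1-\theta')^j, (\theta')^j N'\}$.
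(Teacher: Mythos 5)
Your overall plan coincides with the paper's (perturb by $\delta$, restrict to convex non-decreasing wages, show the restriction is non-binding, pass $\delta\to 0$), but the critical step — proving the minimizer $v_\delta$ equals $\bar v_\delta := \max\{v_w,v_m,v_t\}$ — is where you have a genuine gap.

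You propose to replace $(u_i,v_i)$ by the pair obtained from applying \eqref{wage_w}--\eqref{wage_t} and \eqref{student wage} to $v_i$, and later to "decrement $v_0$ toward $\bar v_0$" iteratively. Neither operation obviously preserves feasibility of the labor stability constraint. The definition $v_w(k)=\sup_{k'} \blt(k,k') - \frac{1}{N'}v(k')$ guarantees $v_w(k)+\frac{1}{N'}v(k')\ge \blt(k,k')$ with $v$, not $\bar v$, in the second slot; once you lower $v$ to $\bar v \le v$, the sum $\bar v(k)+\frac{1}{N'}\bar v(k')$ can drop below $\blt(k,k')$. The paper circumvents this precisely by replacing $v_\delta$ not by $\bar v_\delta$ outright but by the convex combination $v^\lambda := (1-\lambda)v_\delta + \lambda \bar v_\delta$ for small $\lambda>0$, and then verifying feasibility of $(u_\delta,v^\lambda)$ via the two-sided estimates \eqref{first estimate}--\eqref{second estimate}. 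The magic there is that one writes $v^\lambda = v_\delta - \lambda\eta = \bar v_\delta + (1-\lambda)\eta$ (with $\eta:=v_\delta-\bar v_\delta\ge0$) and keeps one of the two appearances of $v^\lambda$ at the $v_\delta$-level, which is enough to salvage the constraint for $\lambda<1/2$. Your iterated decrement would need an analogous quantitative feasibility check at each stage, which you have not supplied; and even granting that, the convergence of the iterates to a feasible fixed point with strictly lower objective is asserted rather than proved.

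Your treatment of the derivative bounds is also off-track. You invoke the finite-descent argument of Proposition \ref{P:specialization}(f), but that requires its hypotheses (d)--(f), not all of which are available here — in particular (f) requires $c>0$ or $v'(0^+)>0$, and assuming the latter is circular since it is essentially what you are trying to prove. The paper's argument is a self-bootstrap directly from \eqref{V' bound}--\eqref{V'' bound}: once $v_\delta = \bar v_\delta$ is known, taking $\inf_k$ on both sides of $v_\delta^{(j)} \ge \min\{(1-\theta')^j\ub b_L^{(j)},\ (\theta')^jN'\ub b_L^{(j)},\ N\theta^j(c_\delta\ub b_E^{(j)}+\inf v_\delta^{(j)})\}$ shows that when $N\theta^j\ge1$ and $c_\delta>0$ the third entry strictly exceeds $\inf v_\delta^{(j)}$, hence cannot be the minimum, leaving only the first two. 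That bound is uniform in $\delta$ and survives the limit. This is substantially simpler than appealing to the occupational-descent argument, and it does not need the extra hypotheses.

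Finally, your $\delta\to0$ limit glosses over the delicate verification that $v_0=\bar v_0$ survives the limit; the paper splits into the cases $v_0(\bar k^-)<\infty$ and $v_0(\bar k^-)=\infty$, using Corollary \ref{C:uniform from below} in both. Mentioning that corollary is good, but the two-case analysis (including the parenthetical argument that \eqref{student wage} persists at $\delta=0$ when $\alpha[\{\bar a\}]=0$) is genuinely needed and does not follow from "the fixed-point identities are preserved by pointwise convergence."
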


\begin{proof}
Fix $0<\delta<1$ and $\cmd:=\cm>0$ positive;  if we prefer $c =0$
set $c_\delta = \delta$  in the $\delta \to 0$ limit procedure
which follows.
We are going to study the perturbed primal problem \eqref{delta dual}
under the same feasibility constraints \eqref{stability education}--\eqref{stability bound}
as \eqref{three line dual} ---
which include $u \in L^1(\bar A,\alpha)$ ---
plus the artificial constraint that
$v$ be convex nondecreasing. 
From \eqref{stability bound}, both $u$ and $v \in L^1(\bar A,\alpha)$  
and have positive lower bounds.
For $\delta>0$ we assume $u,v \in L^1(A,H^1)$ without loss
of generality,  since otherwise the term $\langle u+v \rangle_A =+\infty$
makes the objective diverge.
Feasibility of the pair
$(u,v)=(1+c_\delta\bar b_E/\bar b_L, 1)\bar b_L$ yields an upper bound
$(1+2\delta)(c_\delta \bar b_E + \bar b_L)$ for the infimum \eqref{delta dual}.
As remarked after \eqref{stability bound}, we may always replace $u$ and $v$
by their lower semi-continuous hulls without violating feasibility.  
Since $\alpha \ge 0$,  this only improves the objective \eqref{delta dual};
for the same reason, it costs no generality to henceforth suppose $u$ to be related to $v$
by \eqref{student wage}. Lemma~\ref{L:integral form} then implies both
$v$ and $u$ are convex and non-decreasing, hence continuous as
extended real-valued functions.

Lemma \ref{L:compactness} allows us to extract a subsequential limit
$(u_\delta,v_\delta)$ satisfying the same constraints from any sequence
of approximate minimizers for \eqref{delta dual}.
Fatou's lemma ensures the limit $(u_\delta,v_\delta)$ minimizes
the objective subject to these constraints.
Replacing the monotone convex functions $u_\delta$ and $v_\delta$ again by their lower-semicontinuous hulls
ensures both are continuous.
Since $\bar a \in \spt \alpha$,
our a priori bound $(1+2\delta)(c_\delta \bar b_E + \bar b_L)$ on  the objective implies the non-decreasing functions
$u_\delta(a)$ and $v_\delta(k)$ are finite, except possibly at $\bar a$ and $\bar k$,
and
\begin{equation}\label{a priori bound}
\int_{\bar A} u_\delta(a) \alpha(da) \le (1 + 2 \delta) (c_\delta \bar b_E + \bar b_L).
\end{equation}
Notice equality must hold in
\begin{equation}\label{perturbed student wage}
u_\delta(a) \ge \sup_{k \in [0,\bar k]} \ts \cmd b_E(z(a,k)) + v_\delta(z(a,k))- \frac{1}{N} v_\delta(k)
\end{equation}
since otherwise replacing $u_\delta$ by the right-hand side of
\eqref{perturbed student wage}
yields a feasible pair which lowers the objective functional, contradicting
the asserted optimality.
Use $(u,v)=(u_\delta,v_\delta)$ to define $(v_\delta^w,v_\delta^m,v_\delta^t):=(v_w,v_m,v_t)$ and
$\bar v_\delta := \max\{v_w,v_m,v_t\}$.

Feasibility implies $v_\delta \ge \bar v_\delta$, and Lemma \ref{L:integral form}
implies $\bar v_\delta$ is continuous on $K$, convex increasing on $\bar K$, and satisfies
\begin{eqnarray}
\label{V' bound}
\bar v_\delta'&\ge& 
\min\{(1-\theta')\ub b_L',N'\theta'\ub b_L',(\cmd \ub b_E'+ \inf v_\delta'(k))N\theta\} \quad {\rm and}
\\  \bar v_\delta'' &\ge& 
\min\{(1-\theta')^2 \ub b_L'',(\theta')^2N'\ub b_L'', (\cmd \ub b_E''+ \inf v_\delta'')N\theta^2\}
\label{V'' bound}
\end{eqnarray}
on $]0,\bar k[$. 
If $\eta := v_\delta-\bar v_\delta$ is positive somewhere,  it is positive on
an interval where the only binding constraints can be $v_\delta'=0$ or
$v_\delta''= 0$. For small $\lambda>0$, 
the perturbation $v^\lambda := (1 -\lambda)v_\delta + \lambda \bar v_\delta$
respects these differential constraints.  We will now show the pair $(u_\delta,v^\lambda)$ respects the
other constraints as well;  unless the continuous function $\eta=0$ throughout $K$, this pair
lowers the objective functional, a contradiction forcing $v_\delta = \bar v_\delta$.

Since $v^\lambda = v_\delta - \lambda\eta = \bar v_\delta + (1-\lambda) \eta$,
for $k',k \in \bar K$ we find
\begin{eqnarray}\nonumber
v^\lambda(k') + \ts\frac{v^\lambda(k)}{N'} - \blt(k',k)
&=& \ts \bar v_\delta(k') + \frac{v_\delta(k)}{N'} - \blt(k',k) + (1 - \lambda) \eta(k') -  \frac{\lambda}{N}\eta(k) \\
&\ge& \ts \eta(k')[1 - \lambda (1 + \frac{1}{N} \frac{\eta(k)}{\eta(k')})],
\label{first estimate}
\end{eqnarray}
and also
\begin{eqnarray}\nonumber
v^\lambda(k') + \ts\frac{v^\lambda(k)}{N'} - \blt(k',k)
&=& \ts v_\delta(k') + \frac{\bar v_\delta(k)}{N'} - \blt(k',k) - \lambda \eta(k') +  \frac{1-\lambda}{N}\eta(k) \\
&\ge& \ts \frac{\eta(k)}{N}[1 - \lambda (1 + \frac{N\eta(k')}{\eta(k)})].
\label{second estimate}
\end{eqnarray}
If both $\eta(k') \ge 0$ and $\eta(k)\ge 0$ are non-zero,  then taking $\lambda < 1/2$
ensures either \eqref{first estimate} or \eqref{second estimate} is positive.
The same conclusion remains true if one of $\eta(k')$ or $\eta(k)$ vanishes.
If both vanish,  there is nothing to prove.

On the other hand, adding $u_\delta(a) \ts - \cmd b_E(z(a,k))$
to
$$
\ts \frac{v^\lambda(k)}{N} - v^\lambda(z(a,k))
= \frac{\bar v_\delta(k)}{N} - v_\delta(z(a,k))
+\frac{1-\lambda}{N} \eta(k) + \lambda\eta(z(a,k))
$$
shows
$$
u_\delta(a) + \ts \frac{v^\lambda(k)}{N} - \cmd b_E(z(a,k)) - v^\lambda(z(a,k))
\ge \frac{1-\lambda}{N} \eta(k) + \lambda\eta(z(a,k)) \ge 0
$$
as desired, since $\bar v_\delta \ge v_\delta^t$.
This establishes $v_\delta = \bar v_\delta$ on $K$.
At $\bar k$,
convexity implies upper semicontinuity of $\bar v_\delta$ and it is 
dominated by the continuous function $v_\delta$,  so identity $v_\delta = \bar v_\delta$
extends to $\bar K$. 


As a consequence of \eqref{V' bound}--\eqref{V'' bound},
for $c_\delta>0$ both $v_\delta'$ and $v_\delta''$ 
are bounded away from zero
so the constraints
$\min\{v',v''\} \ge 0$ are not binding.  We claim $(u_\delta,v_\delta)$ must
also minimize the linear program \eqref{delta dual} even among
feasible pairs which do not satisfy these additional constraints.
To see this, we'll suppose the objective was lower at some other
feasible pair $(u,v) \in F_0$ and derive a contradiction.  If $u,v \in C^2(\bar A)$,
then the pair $(1-s)(u_\delta,v_\delta) + s(u,v) \in F_0$ also lowers the objective
for $s>0$ sufficiently small,  and inherits the strict convexity and monotonicity of
$(u_\delta,v_\delta)$ to produce the desired contradiction.  If $u,v \not\in C^2(\bar A)$,
the same contradiction will be obtained after approximating $(u,v)$ by a smooth feasible pair.  
We can at least assume $u$ and 
$v$ are continuous and bounded according to the proof of Theorem \ref{T:no gap}.
The Stone-Weierstrauss theorem then shows $u$ and $v$ can be approximated
uniformly by smooth functions $(\tus,\tvs)$ such that
$u + \sigma \le \tus \le u + 2\sigma$ and 
$v \le \tvs \le v + \sigma$ as $\sigma \to 0^+$.
In this case, $(\tus,\tvs) \in F_0$ follows from $(u,v) \in F_0$.
Convergence of the objective function to its limiting value as $\sigma \to 0$
is readily verified.
This establishes the desired contradiction,  hence the minimality of 
$(u_\delta,v_\delta)$ in $F_0$.


Now Corollary \ref{C:characterize} asserts there are
non-negative measures  $\epsilon_\delta \ge 0$ and $\lambda_\delta \ge 0$
satisfying the perturbed feasibility constraints \eqref{delta primal}
such that
\begin{equation}\label{delta optimal}
\alpha(u_\delta) + \delta \langle u_\delta + v_\delta \rangle_A
= \cmd \epsilon_\delta(\bet) + \lambda_\delta(\blt).
\end{equation}
%
Lemma \ref{L:compactness} yields a subsequential limit
$(u_{\delta_i},v_{\delta_i}) \to (u_0,v_0)$ pointwise on $\bar A \times \bar K$
and uniformly on compact subsets of
$[0,a_0[ \times [0, k_0[$,  with $u_0(a) = +\infty$ for $a>a_0 \in[0,\bar a]$
and $v_0(k)=+\infty$ for $k>k_0 \in [0,\bar k]$ and $a_0\le k_0$.
We claim $a_0=\bar a$.
Recalling the monotonicity of $u_\delta$, 
if $a_0<\bar a$ we have $u_{\delta_i}(k) \to  +\infty$ uniformly on
$a \in [(a_0+\bar a)/2,\bar a]$.  Since $\bar a \in \spt \alpha$,
Fatou's lemma will contradict
the bound \eqref{a priori bound} unless $a_0=\bar a$.
This also forces equality in $\bar k = \bar a \le k_0 \le \bar k$.
Thus $(u_0,v_0)$ are feasible for the original problem \eqref{three line dual}.


Extracting a further subsequence if necessary,  we may also assume
$(\epsilon_{\delta_i},\lambda_{\delta_i}) \to (\epsilon_0, \lambda_0)$ weak-$*$
in $C(\bar A \times \bar K)^*$ as $\delta_i \to 0$
to feasible measures for the dual problem \eqref{three line primal}.
(This compactness argument and topology are also described in the proof
 of Lemma \ref{L:primal existence}.)
Taking the $\delta \to 0$ limit of \eqref{delta optimal}, Fatou's lemma combines
with the weak-$*$ convergence to give
$$
\alpha(u_0) \le c_0\epsilon_0(\bet)+ \lambda_0(\blt) \in \R.
$$
Proposition \ref{P:easy duality} yields the opposite inequality,
and its corollary then confirms
the desired optimality of $(u_0,v_0)$ (and of $(\epsilon_0,\lambda_0)$).


Noting  $v_\delta = \bar v_\delta$, the inequalities
\eqref{V' bound}--\eqref{V'' bound}
survive passage to the $\delta_i \to 0$ limit in both the distributional \eqref{distributional} and a.e.\ senses.
For $j=1$ or $j=2$, when $N\theta^j \ge 1$,  these inequalities imply
$d^j v_\delta/dk^j \ge \ub b_L^{(j)}\min\{(1-\theta')^j,(\theta')^jN'\}$ throughout $K$ before and
hence after the limit.
It remains to show the identity $v_\delta = \bar v_\delta$
survives the $\delta_i \to 0$ limit first on $K$, and eventually on $\bar K$.


Although we have only subsequential
convergence of $(u_\delta,v_\delta)$,
we abuse notation by writing $\delta \to 0$ to denote this subsequence hereafter.
Taking $\delta \to 0$ in the remaining identity  of interest $v_\delta = \bar v_\delta$ yields
\begin{equation}\label{limsup identity}
v_0
:= \lim_{\delta \to 0} v_\delta
= \max\{\limsup_{\delta\to 0} v_\delta^w, \limsup_{\delta\to 0} v_\delta^m, \limsup_{\delta\to 0} v_\delta^t\}.
\end{equation}
Using $\bar k^-$ to denote the limit $k \uparrow \bar k$,
we claim $u_0(\bar k^-)<\infty$ if $v_0(\bar k^-)<\infty$,  and
$u_0(\bar k^-)=\infty$
if $v_0(\bar k^-)=\infty$.
The second claim follows from \eqref{stability bound},
which gives  $u_0(a) \ge \frac{N-1}N v_0(a)$;
the first claim is more subtle unless $\alpha$ has a Dirac mass at $\bar a$, but 
follows from the boundedness of $v_0$
in the supremum \eqref{student wage} due to the following parenthetical paragraph.

(To see that \eqref{student wage} continues to hold when $\delta=0$ assuming $\alpha[\{\bar a\}]=0$, 
 consider the continuous function 
$f_\delta(a,k) := u_\delta(a) + \frac1N v_\delta(k) - \cmd b_E(z(a,k)) - v_\delta(z(a,k)) \ge 0$
 on $A \times \bar K$.
 The zero set $Z_\delta$ of $f_\delta$ is relatively closed in $A \cap \bar K$;  it is non-decreasing by
 the strict submodularity shown in Lemma \ref{L:integral form},
 and contains $(A \times \bar K ) \cap \spt \epsilon_\delta$ according to Corollary \ref{C:characterize}.
For each $(a_\delta,k_\delta) \in Z_\delta$ this monotonicity implies
\begin{equation}\label{left right}
\int_{]a_\delta,\bar a] \times \bar K} \epsilon_\delta(da,dk)
\le \int_{\bar A \times [k_\delta,\bar k]} \epsilon_\delta(da,dk). 
\end{equation}
Fixing $a_\delta =a$,  to establish the limiting case of \eqref{student wage}
it is enough to show $\lim\sup_{\delta \to 0} k_\delta < \bar k$. Recalling
that the left and right marginals of $\epsilon_\delta$ are given by \eqref{delta primal},
setting $\Delta a=\bar a-a$ and $\Delta k_\delta = \bar k - k_\delta$, from \eqref{left right} 
we deduce
\begin{eqnarray*}
\frac1N (\bar a\alpha(]\bar a-\Delta a,\bar a]) + 
\delta \Delta a) &\le&
\bar a (z_\#\epsilon_\delta)([\bar k -\Delta k_\delta,\bar k]) + 
\delta \Delta k_\delta 
\\ &\le&  
\delta\Delta k_\delta + (\bar a \alpha + \delta H^1|_A)([\bar a - \frac1{1-\theta}\Delta k_\delta,\bar a])
\end{eqnarray*}
where the second inequality follows from \eqref{adult tail bound}.
Since $\bar a \in \spt \alpha$ but $\alpha[\{\bar a\}] =0$, 
the left hand side remains bounded away from zero in the limit $\delta \to 0$, whence
we conclude $\lim \inf_{\delta \to 0} \Delta k_\delta >0$ also. Thus \eqref{student wage} holds for $a \in A$
with $\delta=0$.)


Now if $v_0(\bar k^-)<\infty$
then Corollary \ref{C:uniform from below} allows us to deduce
$\ds \limsup_{\delta \to 0} v_\delta^t \le v_0^t$ for $k \in [0,\bar k[$ from
\begin{equation}\label{teachers wage}
v_\delta^t(k) = N \sup_{a \in [0,\bar a[} \cmd b_E(z(a,k)) + v_\delta(z(a,k)) - u_\delta(a),
\end{equation}
noting $u_0(a) \le \lim\inf_{\delta \to 0} u_\delta(a)$ uniformly on $[0,\bar a[$
and $z(a,k)$ is constrained to the range where the convergence
$v_\delta\to v_0$ is uniform.
Showing $\ds \limsup_{\delta \to 0} v_\delta^w \le v_0^w$
and       $\ds \limsup_{\delta \to 0} v_\delta^m \le v_0^m$ is similar but simpler,
whence $v_0 \le \max\{v_0^w,v_0^m,v_0^t\}$.
The opposite inequality follows from the constraints satisfied by $(u_0,v_0)$.

If $v_0(\bar k^-)=+\infty$ on the other hand,  then for fixed $k \in [0,\bar k[$
let $C_\delta$ denote the supremum of $\cmd b_E(z(a,k))+ v_\delta (z(a,k))$ over
$a \in [0,\bar a[$ and observe $C_\delta \to C_0<\infty$ as $\delta \to 0$.
Take $\delta_0>0$ sufficiently small that $C_{\delta_0} < 2C_0$,  and
smaller if necessary using Corollary \ref{C:uniform from below}
so that $u_\delta(\bar a-\delta_0)>2 C_0$ for all $\delta<\delta_0$.
For $\delta<\delta_0$,  the supremum \eqref{teachers wage} is unchanged
if we restrict its domain $a \in [0,\bar a-\delta_0]$ to an interval where
convergence $(u_\delta,v_\delta) \to (u_0,v_0)$ is uniform.  Thus taking $\delta\to 0$
in \eqref{teachers wage} yields $\ds \lim_{\delta \to 0} v_\delta^t(k) = v_0^t(k)$.
A similar but simpler argument yields $v_0^w(k) = \ds \lim_{\delta \to 0} v_\delta^w(k)$
and $v_0^m(k) = \ds \lim_{\delta \to 0} v_\delta^m(k)$,  whence the desired identity follows
from \eqref{limsup identity}.

It costs no generality to replace
$u_0$ by the right hand side of \eqref{perturbed student wage}
with $\delta=0$, which is feasible and no larger than $u_0$ in any case.
(In fact, they coincide throughout $A$ by the parenthetical paragraph above.)
Let us now argue that we may take $v_0$ to be continuous, or equivalently take equality
to hold in $v_0(\bar k^-) \le v_0(\bar k)$.  If $v_0(\bar k^-)<v_0(\bar k)$,  replacing
$v_0(\bar k)$ with $v_0(\bar k^-)$ does not violate any of the feasibility constraints.
Nor does it affect the values of $v_w,v_m,v_t$ or $u_0$ --- except to remedy any
discontinuity in $v_t$ or $u_0$ by reducing $v_t(\bar k)$ and $u_0(\bar a)$.
This can only improve the objective,  and by continuity of all of the resulting functions
extends the identity $v_0=\bar v_0$ from $K$ --- where it was already
established --- to $\bar K$, to complete the proof.
%
%
\end{proof}

\subsection{Uniqueness and properties of optimal matchings}

Finally,  we are ready to tackle the structure of optimal matchings
in the education and labor sectors,  and to give conditions guaranteeing
uniqueness of optimizers for both the primal and dual problems
\eqref{three line dual}--\eqref{three line primal}.

The structure our education sector often leads to positive assortative matching
$\epsilon$ of students with teachers. (Our labor sector always leads to
positive assortative matching of workers to managers.)  However, since
distribution $\kappa$ of cognitive skills acquired by adults in our population
is endogenous,  it might not be unique.  The following theorem specifies conditions
for uniqueness.  These require, in particular,  that $\kappa$ as well as the
exogenous distribution of student skills $\alpha$ be atom free.  The
following lemma details how $\kappa$ inherits this and other useful properties from
the distribution $\alpha$ of student skills input.  Even without positive assortativity,
unless the (exogenous) probability measure $\alpha$ concentrates positive mass at the
top skill type $\alpha[\{\bar a\}]>0$,  it follows that $\kappa$ concentrates no
mass at the upper endpoint of $K=[0,\bar k[$.  Then any matching
$\epsilon \ge 0$ on $\bar A \times \bar K$ which satisfies the steady-state constraint
$\frac{1}{N}\epsilon^{1} \le z_\#\epsilon$ must
concentrate all of its mass on $A \times K$.

\begin{lemma}[Endogenous distribution of adult skills]\label{L:adult tail bound}
%
%
Fix $\theta \in]0,1[$ and a Borel measure $\alpha \ge 0$ on $\bar A$ 
with $\alpha[\bar A]<\infty$ for
$A=[0,\bar a[$ with $\bar a>0$.
Set $K = [0, \bar k[ =A$ and $z(a,k)=(1-\theta)a+\theta k$.
If $\epsilon \ge 0$ on $\bar A \times \bar K$
has $\alpha = \epsilon^{1}$
as its left marginal, then for each $\bar k -\Delta k \in K$
the corresponding distribution
$\kappa =z_\# \epsilon$ of adult skills
satisfies
\begin{equation}\label{adult tail bound}
\int_{[\bar k - \Delta k,\bar k]} \kappa(dk)
\le 
\int_{[\bar a - \frac1{1-\theta}\Delta k,\bar a]}\alpha(da).
\end{equation}
Thus $\kappa$ has no atom at $\bar k$ unless $\alpha$ has an atom at $\bar a$.

In addition, if $\epsilon$ is positive assortative and
$\alpha$ has no atoms,  then  $\kappa$ has no atoms and
$\epsilon = (id \times k_t)_\#\alpha$ for some
non-decreasing map $k_t:\bar A \longrightarrow \bar K$.
uniquely determined $\alpha$-a.e.\ by $\kappa$.
Moreover,
if $\alpha(da) = \alpha^{ac}(a)da$ is given by a density $\alpha^{ac} \in L^1(A)$,
then $\kappa(dk)=\kappa^{ac}(k)dk$ is given by a related density $\kappa^{ac} \in L^1(K)$
satisfying
\begin{equation}\label{Monge-Ampere}
\alpha^{ac}(a)= 
\left(1 + \theta(k_t'(a)-1)\right)
\kappa^{ac}(z(a,k_t(a)))
\end{equation}
for Lebesgue-a.e. $a \in A$.  In this case
$\|\kappa^{ac}\|_{L^\infty(K)} \le \frac1{1-\theta} \|\alpha^{ac}\|_{L^\infty(A)}$.
\end{lemma}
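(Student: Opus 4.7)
For the tail bound \eqref{adult tail bound}, the plan is to observe that since $\bar a = \bar k$ and $k\le \bar k$, the condition $z(a,k) = (1-\theta)a + \theta k \ge \bar k - \Delta k$ forces $(1-\theta)a \ge (1-\theta)\bar a - \Delta k$, so $a \ge \bar a - \tfrac{1}{1-\theta}\Delta k$. This shows $z^{-1}([\bar k-\Delta k,\bar k]) \subset [\bar a - \tfrac{1}{1-\theta}\Delta k,\bar a]\times \bar K$, and the bound follows from $\epsilon^{1}=\alpha$. The atom-free assertion is then immediate by sending $\Delta k\to 0$.

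For the graph representation when $\epsilon$ is positive assortative and $\alpha$ is atom-free, the plan is to exploit the elementary structure of monotone subsets of $\R^2$: the support $S=\spt\epsilon$ is non-decreasing, so for each $a$ the vertical section $S_a:=\{k:(a,k)\in S\}$ is empty, a point, or an interval, and the set of $a$ at which $S_a$ has positive length is at most countable (the corresponding intervals are pairwise disjoint in $\bar K$). Since $\alpha$ is atom-free, the mass $\epsilon[\{a\}\times S_a]\le \alpha[\{a\}]=0$ on each such vertical, so $\epsilon$ concentrates on the graph of a non-decreasing map $k_t:\bar A \longrightarrow \bar K$, uniquely determined $\alpha$-a.e.\ by $\kappa$. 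Writing $\phi(a):=z(a,k_t(a))=(1-\theta)a + \theta k_t(a)$, I compute $\phi(b)-\phi(a)\ge (1-\theta)(b-a)$, so $\phi$ is strictly increasing; hence $\phi^{-1}(\{k\})$ is a singleton, and atom-freeness of $\alpha$ transfers to $\kappa=\phi_\#\alpha$.

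For the density formula, the plan is to prove $\kappa \ll H^1$ and extract the pointwise identity by a Lebesgue-differentiation argument. The key step is: for any Lebesgue-null set $B\subset\bar K$, the preimage $\phi^{-1}(B)$ is Lebesgue-null. This uses the Vitali-covering fact that $|\phi(F)|_{\rm outer}\ge \int_F \phi' dH^1$ for every Borel $F$, together with the a.e.\ lower bound $\phi'(a)=1+\theta(k_t'(a)-1)\ge 1-\theta>0$, which follows from monotonicity of $k_t$ and its a.e.\ differentiability; if $|\phi^{-1}(B)|>0$ then the inequality would give $|B|\ge (1-\theta)|\phi^{-1}(B)|>0$, contradicting $|B|=0$. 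Combined with $\alpha\ll H^1$, this yields $\kappa\ll H^1$. Then, at any common Lebesgue point $a_0$ of $\alpha^{ac}$ and differentiability of $k_t$, comparing $\kappa([\phi(a_0)-h,\phi(a_0)+h])$ with $\alpha([a_0-h/\phi'(a_0)+o(h),a_0+h/\phi'(a_0)+o(h)])$ and dividing by $2h$ gives $\kappa^{ac}(\phi(a_0))=\alpha^{ac}(a_0)/\phi'(a_0)$, which is the asserted formula. The $L^\infty$ bound $\|\kappa^{ac}\|_{L^\infty(K)}\le \frac{1}{1-\theta}\|\alpha^{ac}\|_{L^\infty(A)}$ then follows immediately from $\phi'(a)\ge 1-\theta$, while $\kappa^{ac}\in L^1(K)$ is automatic from $\kappa[\bar K]=\alpha[\bar A]<\infty$.

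The delicate step is the absolute continuity of $\kappa$: even with $\phi'\ge 1-\theta>0$ a.e., $\phi$ need not itself be absolutely continuous (its derivative of the singular part of $k_t$ can be nonzero on a null set), so a naive application of the change-of-variables formula would fail. The resolution is that the desired direction only requires $\phi$ to have the \emph{reverse} Lusin property (preimages of null sets are null), which is guaranteed by the pointwise lower bound on $\phi'$ via the Vitali-covering inequality described above; the singular part of $k_t$ only creates gaps in $\spt\kappa$ rather than concentrating mass.
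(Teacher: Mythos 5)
Your proof is correct, and Parts 1 and 2 (the tail bound and the reduction to a graph $\epsilon=(id\times k_t)_\#\alpha$) follow essentially the same route as the paper, though you spell out the monotone-set argument that the paper delegates to Lemma~3.1 of \cite{AhmadKimMcCann11} and \cite{McCann95}.

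The genuine divergence is in Part 3 (absolute continuity of $\kappa$ and the density identity). You correctly identify the pitfall --- $\phi(a)=z(a,k_t(a))$ need not be absolutely continuous --- and you resolve it via the one-sided Vitali/Banach inequality $\left|\phi(F)\right|^*\ge\int_F\phi'\,dH^1$, deducing the ``reverse Lusin'' property from the a.e.\ bound $\phi'\ge 1-\theta$. The paper takes a shorter and more elementary route: it observes that monotonicity of $k_t$ gives the \emph{pointwise} expansion
\begin{equation*}
\phi(a_1)-\phi(a_2)\;=\;(1-\theta)(a_1-a_2)+\theta\bigl(k_t(a_1)-k_t(a_2)\bigr)\;\ge\;(1-\theta)(a_1-a_2)\qquad(a_1\ge a_2),
\end{equation*}
which requires no a.e.\ differentiability or covering argument whatsoever, and implies directly that the inverse map $g=\phi^{-1}$ is Lipschitz with constant $1/(1-\theta)$. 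Then $\kappa[K']=\alpha[g(K')]$ and $H^1[g(K')]\le\frac{1}{1-\theta}H^1[K']$ give both the absence of atoms and the absolute continuity of $\kappa$ immediately, and the density identity is delegated to \cite{McCann97}. Your Vitali argument is sound, but it invokes heavier machinery (and the qualifier ``for monotone functions'' implicitly) to reprove a bi-Lipschitz estimate that is available by inspection; consider replacing the covering argument with the pointwise expansion inequality above. Your final Lebesgue-differentiation step for the identity $\kappa^{ac}(\phi(a_0))=\alpha^{ac}(a_0)/\phi'(a_0)$ is fine once the preimage of a small interval centered at $\phi(a_0)$ is identified (using continuity and strict monotonicity of $\phi$, together with differentiability at $a_0$) as an interval of length $2h/\phi'(a_0)+o(h)$ around $a_0$.
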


\begin{proof}
The definition $\kappa = z_\#\epsilon$ yields
$\kappa([\bar k - \Delta k,\bar k]) = \epsilon[z^{-1}([\bar k -\Delta k,\bar k])]$.
Now
$
\bar k -\Delta k \le z(a,k) \le (1-\theta)a+\theta\bar k
$
implies $a \ge \bar k-\frac1{1-\theta} \Delta k$.
Thus
$$\ts \kappa([\bar k - \Delta k,\bar k])
 \le \epsilon\left([\bar a-\frac1{1-\theta}\Delta k,\bar a] \times \bar K\right)
= \alpha([\bar a-\frac1{1-\theta}\Delta k,\bar a])$$
which is the desired bound
\eqref{adult tail bound}.

For the measure $\epsilon$ to be positive assortative means its support $\spt \epsilon$
is non-decreasing.  Except possibly for a countable number of jump discontinuities,  this
support is then contained in the graph of some non-decreasing map
$k_t:\bar A \to \bar K$.  If $\alpha$ is free of atoms,  the countable set of $a$
where the jumps occur is a set of measure zero.  Then the formula
$\epsilon = (id \times k_t)_\# \alpha$
and uniqueness of $k_t$ are
well-known facts, established e.g.\ in Lemma 3.1 of \cite{AhmadKimMcCann11}
and the main theorem of \cite{McCann95}.  It follows that
$f(a)=z(a,k_t(a))$ is non-decreasing, and pushes $\alpha$ forward to $\kappa$.
By Lebesgue's theorem,
$f'(a) =1-\theta +\theta k_t'(a)$ exists
$H^1$-a.e.\ and enjoys the positive lower bound
$f'(a) \ge 1-\theta$.
Thus $f$ is one-to-one and there is an
inverse function $g:\bar K \longrightarrow \bar A$ with Lipschitz constant at most
$\frac1{1-\theta}$ such that $g(\bar f(k))=k$ for any non-decreasing
extension $\bar f:\bar K \longrightarrow \bar A$ of $f$ (to points where
$k_t(a)$ may not be differentiable).  For $K'\subset \bar K$ we have
$\kappa[K'] = \alpha[f^{-1}(K')] = \alpha[g(K')]$.  Taking $K'$ to consist
of any single point shows $\kappa$ has no atoms if $\alpha$ has no atoms.
Taking $K'$ to be an arbitrary set of Lebesgue measure zero shows
$\kappa$ absolutely continuous with respect to Lebesgue if $\alpha$
is absolutely continuous with respect to Lebesgue,  noting
$H^1[g(K')] \le \frac1{1-\theta} H^1[K']$.
The formula $\alpha^{ac}(a) = f'(a) \kappa^{ac}(f(a))$ then follows essentially
from the fundamental theorem of calculus, and is argued rigorously in \cite{McCann97}.
The bound
$\|\kappa^{ac}\|_{L^\infty(K)} \le \frac1{1-\theta} \|\alpha^{ac}\|_{L^\infty(A)}$
is a consequence, so the proof is complete.
\end{proof}

\begin{theorem}[Positive assortative and unique optimizers]\label{T:unique}
Fix $\cm\ge0$ and positive $\theta,\theta', N,N'$ and $\bar a$ 
with $\max\{\theta,\theta'\}<1\le N$.  
Set $A=[0,\bar a[$ and 
let  $\alpha$ be a Borel probability measure on $\bar A$ satisfying the doubling
condition \eqref{doubling} at $\bar a \in \spt \alpha$. Define
$z(a,k) =(1-\theta)a + \theta k$, $\bet=b_E\circ z$ and $\blt(a,k)=b_L((1-\theta')a+\theta'k)$
where $b_{E/L} \in C^1(\bar K)$ satisfy \eqref{utility bound 0}--\eqref{utility bound 2}.
If $\epsilon,\lambda \ge 0$ on $\bar A^2$
maximize the dual problem \eqref{three line primal},
then the labor matching $\lambda$ is positive assortative.  Moreover,
there exist a pair of maximizers $(\epsilon,\lambda)$
for which the educational matching $\epsilon$ is also positive assortative.
If there exist minimizing payoffs $(u,v) \in F_0$ 
for  the dual problem
\eqref{three line dual} 
which %
are non-decreasing and strictly convex,
(as for example
if either $\cm>0$ or $N\theta^2 \ge 1$),
then any maximizing $\epsilon$ and $\lambda$ are positive assortative.
If, in addition, $\alpha$ is free from atoms then the maximizing $\epsilon$ and $\lambda$ are unique.
If, in addition, hypotheses
(d)-(f) from Proposition \ref{P:specialization} hold,
then $u'$ and $v'$ exist and are uniquely determined
$\alpha$-a.e.\ and $(z_\#\epsilon)$-a.e.\ respectively.  If, in addition,
$\alpha$ dominates some absolutely continuous measure whose support fills $\bar A$,
and $(u_0,v_0) \in F_0$ is any other 
minimizer with 
$v_0: A \longrightarrow \R$ locally Lipschitz
then $u_0=u$ holds $\alpha$-a.e., meaning $u_0$ is unique.
\end{theorem}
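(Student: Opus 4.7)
The overall strategy combines complementary slackness from Corollary \ref{C:characterize} with the strict submodularity of the Lagrangian functions inherited from convexity of wages via Lemma \ref{L:integral form}. Throughout the argument I write $f(a,k) := u(a) + \frac{1}{N}v(k) - \cm \bet(a,k) - v(z(a,k))$ and $g(k',k) := v(k') + \frac{1}{N'}v(k) - \blt(k',k)$, both nonnegative and vanishing on $\spt\epsilon$ and $\spt\lambda$ respectively. Whenever such a function is \emph{strictly} submodular, its zero set contains no antimonotone pair: for if $(x,y),(x',y') \in \{g = 0\}$ with $x < x'$ and $y > y'$, strict submodularity and $g \ge 0$ yield $0 > g(x,y') + g(x',y) \ge 0$, a contradiction. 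Hence the corresponding support is monotone non-decreasing, i.e., positive assortative.

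For the labor assertion (1), $\blt = b_L \circ z'$ is strictly supermodular because $b_L'' \ge \ub b_L'' > 0$ and $\theta', 1-\theta' \in ]0,1[$, while $v(k')+v(k)/N'$ is separable; thus $g$ is strictly submodular and $\lambda$ is PA unconditionally. For existence of a PA $\epsilon$ in (2), I invoke the $\delta \to 0$ limit procedure inside Theorem \ref{T:minimizing wages}: because $\cmd > 0$ and $b_E$ is strictly convex, $\cmd \bet$ is strictly supermodular, so $f_\delta$ is strictly submodular and $\epsilon_\delta$ is PA by the argument above. Since the class of PA measures is weak-$*$ closed, the limiting $\epsilon_0$ from that proof is also PA. For (3), strict convexity of $v$ makes $v \circ z$ strictly supermodular by Lemma \ref{L:integral form}, hence $f$ strictly submodular, so every maximizing $\epsilon$ is PA; the parenthetical conditions $\cm > 0$ or $N\theta^2 \ge 1$ guarantee such a $v$ via the lower bound on $v''$ established in Theorem \ref{T:minimizing wages}.

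For uniqueness of $\epsilon$ in (4), suppose $\epsilon_1, \epsilon_2$ are two PA maximizers; their midpoint $\epsilon_{1/2} := \tfrac{1}{2}(\epsilon_1 + \epsilon_2)$ is feasible, optimal by linearity of the LP, and supported in the monotone set $\{f = 0\}$, hence also PA. By Lemma \ref{L:adult tail bound}, atom-freeness of $\alpha$ forces each PA measure with left marginal $\alpha$ to equal $(\mathrm{id}\times k_t^i)_\#\alpha$ for a unique non-decreasing $k_t^i : \bar A \to \bar K$; applying this to $\epsilon_1, \epsilon_2$, and $\epsilon_{1/2}$ and comparing the conditional distributions given $a$ forces $k_t^1 = k_t^2$ $\alpha$-a.e., so $\epsilon_1 = \epsilon_2$. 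For uniqueness of $\lambda$: once $\epsilon$ and hence $\kappa = z_\# \epsilon$ are determined, the steady-state constraint fixes $\mu := \kappa - \epsilon^2/N = \lambda^1 + \lambda^2/N'$; since $\kappa$ is atom-free (Lemma \ref{L:adult tail bound} again), the total masses of $\lambda^1, \lambda^2$ are prescribed, and $\lambda$ is PA, comparing cumulative distribution functions of $\mu$ yields the unique decomposition and $\lambda$ is then recovered from its marginals.

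For (5) and (6), differentiating $f(a,k_t(a)) = 0 = \min_k f(a,k)$ at interior points yields the envelope identities
\[ u'(a) = (1-\theta)(\cm b_E' + v')(z(a,k_t(a))), \qquad v'(k_t(a)) = N\theta(\cm b_E' + v')(z(a,k_t(a))), \]
determining $u'$ $\alpha$-a.e.\ and $v'$ at $(z_\#\epsilon)$-a.e.\ point of the teacher range; the analogous first-order conditions on $\{g=0\}$ determine $v'$ on the worker/manager portions of $\kappa$. Hypotheses (d)-(f) in Proposition \ref{P:specialization} ensure these conditions make sense throughout the relevant supports. For (6), any other minimizer $(u_0, v_0)$ with $v_0$ locally Lipschitz must satisfy the same first-order identities, by uniqueness of $\epsilon$ and of $v'$, so $u_0' = u'$ $\alpha$-a.e.; since $\alpha$ dominates an absolutely continuous measure whose support fills $\bar A$, integration yields $u_0 = u + \mathrm{const}$ $\alpha$-a.e., and the equality of LP values $\alpha(u_0) = \alpha(u) = LP_*$ forces the constant to vanish. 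The main obstacles I anticipate are the uniqueness of $\lambda$, whose marginals are endogenous and require careful handling without a priori specialization, and the integration step in (6), where the absolute-continuity hypothesis on $\alpha$ is precisely what bridges pointwise a.e.\ equality of derivatives to $\alpha$-a.e.\ equality of the primitives.
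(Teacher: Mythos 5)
Your overall strategy tracks the paper's closely: strict submodularity of $f$ and $g$ from convexity/strict convexity of $v$, complementary slackness via Corollary \ref{C:characterize}, a limiting argument when strict convexity fails, and monotone-rearrangement facts to pass from PA support to uniqueness. The PA statements, the existence of a PA $\epsilon$ by approximation, and the uniqueness of $\epsilon$ are all essentially correct and match the paper (your midpoint-measure variant for $\epsilon$-uniqueness is a minor cosmetic difference from the paper's appeal to the common monotone set $F$).

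There are, however, two genuine gaps. First, the uniqueness of $\lambda$. Your step ``comparing cumulative distribution functions of $\mu$ yields the unique decomposition'' is not an argument: knowing only that $\lambda$ is PA and that $\lambda^1 + \lambda^2/N' = \mu$ is fixed does not obviously determine $\lambda^1$, because the map $\lambda^1 \longmapsto \lambda^1 + h_\#\lambda^1/N'$ (with $h$ the monotone correspondence picked out by $G$) need not have a transparent inverse in terms of CDFs, especially if worker and manager types overlap. The paper handles this quite differently and more carefully: it forms $\Delta\lambda := \lambda_0 - \lambda_1$, uses that both $\lambda_i$ live on the same strictly increasing graph (atom-free marginals excluding horizontal/vertical segments of $G$) to identify $((\Delta\lambda)_\pm)^j$ with $((\Delta\lambda)^j)_\pm$, and then runs a mass-balance argument on the Jordan decomposition of the feasibility identity $\Delta\lambda^1 + \Delta\lambda^2/N' = 0$ to force $\Delta\lambda = 0$ when $N' \neq 1$. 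Something of this nature is needed; the bare CDF remark does not deliver it.

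Second, the uniqueness of $v'$ on teacher types. The envelope identity $v'(k_t(a)) = N\theta\,(\cm b_E' + v')(z(a,k_t(a)))$ does not by itself \emph{determine} $v'$ at a teacher skill level: it expresses it in terms of $v'$ at the skill level $z(a,k_t(a))$ of that teacher's graduates, who may themselves be teachers. You need to iterate this recursion --- exactly the iteration \eqref{iterate 0} of Proposition \ref{P:specialization} --- until reaching a worker or manager type, where $v'$ is pinned down by the first-order conditions on $G$, and hypothesis (f) together with (d)--(e) is precisely what guarantees the chain terminates after finitely many steps. Saying that hypotheses (d)--(f) ``ensure these conditions make sense'' skips the actual point of invoking them: termination of the recursion. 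Without termination, the envelope identity is underdetermined on the teacher range and the uniqueness of $v'$ (and hence the step from $u'_0 = u'$ $\alpha$-a.e.\ to $u_0 = u$) does not close.
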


\begin{proof}
Set $K=[0,\bar k[ = A$.  Existence of a maximizing pair
$(\epsilon,\lambda)$ is asserted by Lemma \ref{L:primal existence}.
Let us begin by showing that they are positive assortative under
the extra condition 
that minimizing payoffs $(u,v)$
exist for \eqref{three line dual} 
which are strictly convex.
Lemma~\ref{L:integral form}
asserts $v(z(a,k))$ is then strictly supermodular.

Set
$f(a,k) = u(a) + \frac{v(k)}{N} - \cm b_E(z(a,k)) - v(z(a,k)) \ge 0$
and $g(k',k) = v(k') + \frac{v(k)}{N} - \blt(k',k)) \ge 0$
on $\bar A \times \bar K$,  with the convention $f(\bar a,\bar k) \le 0$ if $v(\bar k) =+\infty$,
and vanishing if and only if $u(\bar a) = +\infty$ in addition.
Corollary~\ref{C:characterize} asserts
$\epsilon(f)=0$ and $\lambda(g)=0$ for any dual maximizers $(\epsilon, \lambda)$.
Thus $\epsilon$ and $\lambda$ must vanish outside the respective zero sets
$F \subset \bar A \times \bar K$ of $f$ and $G\subset \bar K^2$ of $g$.


When 
$f$ and $g$ are strictly submodular, 
then $F$ and $G$ are non-decreasing in the plane,
meaning $\lambda$ and $\epsilon$ are positive assortative. 
This strict submodularity follows from that of $-b_E(z(a,k))$ and
$-v(z(a,k))$.

Finally, assume in addition that $\alpha$ is atom free.
If $(\epsilon_i,\lambda_i)$ are dual maximizers,  for $i=0,1$,
then so is their average
$(\epsilon_2,\lambda_2):= ({\epsilon_0+\epsilon_1},{\lambda_0+\lambda_1})/2$.
Thus $\epsilon_2$ vanishes outside the non-decreasing set $F$,
as do $\epsilon_{0/1}$.  Similarly $\lambda_i$ all vanish outside
the same non-decreasing set $G$ for $i=0,1,2$.  This strongly suggests
the asserted uniqueness,  an intuition we now make precise.
Except perhaps for a countable number of vertical segments,
the non-decreasing set $F$ is contained in the graph of a non-decreasing
map $k_t:\bar A \longrightarrow \bar K$.  Any joint measure $\epsilon$
with $\epsilon^1 = \alpha$ cannot charge these vertical segments,
since this would imply $\alpha$ has atoms.  Since our maximizers $\epsilon_i$
vanishes outside the graph of $k_t$,  we conclude they must coincide with
the measure $(id \times k_t)_\#\alpha$ by Lemma 3.1 of \cite{AhmadKimMcCann11}.
This identification shows $\epsilon_0=\epsilon_1$.
The associated distributions $\kappa=z_\#\epsilon_0$ and $\kappa_t = (\epsilon_0)^2/N$
of adult and teacher skills are therefore also unique.
Moreover,  $\kappa$ is free from atoms, according to
Lemma \ref{L:adult tail bound}.

Let $\lambda_i^1$ and $\lambda_i^2$ be the left and right marginals
of each maximizer $\lambda_i\ge 0$ for the labor sector,  whose feasibility
implies $\lambda_i^1 + \lambda_i^2/N' = \kappa - \kappa_t$ is also atom-free.
Let $\Delta\lambda = \lambda_0 - \lambda_1$
denote the difference of the two maximizers.  Recall that both $\lambda_i$ ---
and hence $\Delta \lambda$ --- must vanish outside the same non-decreasing set $G$.
Just as before,
the non-decreasing set $G$ has at most countably many horizontal and vertical segments,
which $\lambda_i$ cannot charge since its marginals 
are free from atoms. Now the positive marginals
$\Delta \lambda^1_+ := ((\Delta \lambda)_+)^1 = ((\Delta \lambda)^1)_+$
and $\Delta \lambda^2_+$ of the difference must have the same mass,
since the atom-free condition precludes cancellations.  On the other hand,
feasibility implies
$N \Delta \lambda^1_+ - N \Delta \lambda^1_- + \Delta \lambda^2_+ - \Delta \lambda^2_-=0$,
which forces $N \Delta \lambda^1_+ = \Delta \lambda^2_-$ (and
$N \Delta \lambda^1_- = \Delta \lambda^2_+$).  Since these two measures have the
same mass,  $N\ne1$ produces a contradiction unless $\Delta \lambda=0$.
If $N=1$ so that all adults are teachers, then $\lambda_i=0$.
This establishes the uniqueness asserted for the dual problem.

Having established the existence of positive assortative maximizers
when $v$ is strictly convex,  we now turn to the case that strict convexity
fails.  According to Theorem \ref{T:minimizing wages},
this happens only when $c=0$ and $N\theta^2<1$,
so we can approximate this situation as a $c\to 0$ limit.
%
Let $(\epsilon_c,\lambda_c)$ and $(u_c,v_c)$ be the (non-negative)
optimizers described above for the problem with $c>0$, so that
$c\epsilon_c(\bet) + \lambda_c(\blt) = \alpha(u_c)$ according to
Remark \ref{R:characterize}.  Using
the Banach-Alaoglu theorem as in the proof of Lemma \ref{L:primal existence},
and the compactness results of Lemma \ref{L:compactness},
we extracting a subsequential limit
$(\epsilon_c,\lambda_c) \to (\epsilon,\lambda)$
in the weak-$*$ topology on $C(\bar A \times \bar K)^*$ and
$(u_c,v_c) \to (u,v)$ locally uniformly on $[0,a_0[$,
with $u(a)=+\infty=v(a)$ for all $a > a_0$.
The limiting pairs are feasible for the primal and dual problems
respectively,
and positive assortativity survives the limiting process \cite{McCann95}.
%
Fatou's lemma allows us to
take the subsequential limit of $c\epsilon_c(\bet) + \lambda_c(\blt) = \alpha(u_c)$
to arrive at $\lambda(\blt) \ge \alpha(u)$.
The reverse inequality is asserted by Proposition \ref{P:easy duality},
and confirms optimality of $(\epsilon,\lambda)$ by Corollary~\ref{C:characterize}.


We now address uniqueness of the primal minimizers.
Since $u$ and $v$ are strictly convex,  both are continuous functions
with one-sided
derivatives throughout $K$,  and two-sided derivatives except perhaps at
countably many points.  Define $u(\bar a) = \lim_{a \to \bar a} u(a)$ and $v(\bar k)$ similarly.
Since the measures $\alpha$ and $z_\#\epsilon$
have no atoms,  the asserted derivatives of $u$ and $v$ exist.
Denote the distribution
of workers and managers by $\kappa_w := \pi^1_\#\lambda$
and $\kappa_m := \pi^2_\#\lambda/N'$.
The projections of $\spt \epsilon$ through $\pi^1(a,k)=a$ and $\pi^2(a,k)=k$
are compact sets of full measure for $\kappa_w$ and $\kappa_m$ respectively.
Take $\dom v' \subset ]0,\bar k[$ by convention.
For each $k' \in \pi^1(\spt \lambda) \cap \dom v'$,
there is a unique $k \in \bar K$ with
$(k',k) \in \spt \lambda \subset G$.
The first-order condition $g_{k'}(k',k)=0$
then gives $v'(k')=(1-\theta')b_L'( (1-\theta)k'+\theta k)$;
by strict convexity of $b_L$ 
there cannot be two such $k$ without
differentiability of $v$ failing at $k'$.
This shows $v'$ to be uniquely determined by $\lambda$ throughout
$\pi^1(\spt \lambda) \cap \dom v'$ --- a set of full $\kappa_w$ measure.
A similar argument with the roles of $k'$ and $k$ interchanged shows
$v'(k) = N'\theta' b_L'({(1-\theta')k' + \theta' k})$ is uniquely determined by $\lambda$
on the set $\pi^2(\spt \lambda) \cap \dom v'$
containing $\kappa_m$-a.e.\ manager type~$k$.

To address $v'(k)$ for the teacher types $k$,
assume hypotheses (d)-(f) of Proposition \ref{P:specialization}.
For $k_1 \in \spt \kappa_t \cap \dom v'$,  that proposition
provides a recursive formula \eqref{iterate 0} asserting $k_2 \in \dom v'$,
and relating $v'(k_1)$ to $v'(k_2)$,  where $(a_1,k_1) \in \spt \epsilon$
and $k_2=z(a_1,k_1)$ is the skill of those
adults who were trained by type $k_1$ teachers.
The strict monotonicity of $v'(k)$ we have assumed implies $a_1$ and $k_2$
are unique.
The proposition also asserts
that after a finite number $d$ of iterations,  this recursion
terminates with an adult of skill $k_d$ who is willing to become a worker
or a manager, and whose wage gradient $v'(k_d)$ is therefore determined by
the considerations above.  Thus $v'(k_1)$ is uniquely determined
by $\epsilon,\lambda$, and \eqref{diverging sum}.  This establishes the
$\kappa$-a.e.\ uniqueness of the wage gradient $v'$.

Finally,  we turn to the net lifetime surplus $u(a)$ of student type $a \in ]0,\bar a[$.
For $a \in \pi^1(\spt \epsilon) \cap \dom u'$, there exists $k \in \bar K$
(which we'll show to be unique)
such that $(a,k) \in \spt \epsilon \subset F$.  The first-order conditions
for one-sided derivatives $\pm f_a(a^\pm,k) \ge 0$ give
$$
v'(z(a,k)^-)  + c b_E'(z(a,k)^-)
\ge \frac{u'(a)}{1-\theta} 
\ge v'(z(a,k)^+) + \cm b_E'(z(a,k)^+). 
$$
However,  the convexity of $v$ on $]0,\bar k[$ assert $v'(z^-) \le v'(z^+)$ and similarly
for $b_E$,  so both
$v$ and $b_E$ must be differentiable at $z(a,k)$ and equalities hold throughout.
Thus
$$
 v'(z) +\cm b_E'(z)  =  \frac1{1-\theta}u'(a).
$$
Since the left hand side is strictly increasing in $z$,
we find $z(a,k)$ and hence $k$ is unique.
Since $v'$ was uniquely determined for $z_\#\epsilon$ adult type,
it follows that $u'$ is uniquely determined for $\alpha$-a.e. student type.
If 
$\alpha$ dominates some absolutely continuous measure whose support fills
$\bar A$, this shows $u$ is unique up to an additive constant.
Given another feasible minimizer $(u_0,v_0)$ with $v_0$ locally Lipschitz,
we see $u_0$ must produce equality $\alpha$-a.e.\ in the inequality
\eqref{perturbed student wage};  otherwise replacing $u_0$ by the 
right-hand side would remain feasible and lower the objective \eqref{three line dual}.
On the other hand,  the right hand side is locally Lipschitz,  according to
Lemma~\ref{L:inherit Lipschitz and semiconvex}.  The arguments above then yield
$u_0 = u + const$.   But the constant must vanish since both minimizers yield
the same value for the objective functional, showing $u_0$ is unique in $L^1(\bar A,\alpha)$.
\end{proof}


\subsection{
Phase transition to unbounded wage gradients}
\label{S:phase transition}

Having come this far,  one may wonder whether establishing the existence
of competitive equilibria need be so involved.  If we had
been content to find optimizing wages $u$ and $v$ which are merely non-decreasing,
an 
argument based on Helly's selection theorem
might have sufficed.  However, we
would not then know the convexity of the wages 
(used to prove their uniqueness),  nor
positive assortativity of the education sector.

In this section,  we explore the actual behavior of $v(k)$ near the top skill
type $\bar k$,  assuming the distribution
of student types is given by a continuous density
$\alpha(da) = \alpha^{ac}(a)da$ on $A=K=[0,\bar k[$.
Under mild differentiability hypotheses,
our next theorem establishes
the existence of a phase transition separating bounded from
unbounded wage gradients.
For $N\theta> 1$, it shows the education sector may form into a pyramid
scheme in which the marginal wage $v'(k)$ diverges to infinity as $k \to \bar k$,
even though the absolute wage $v(k)$ remains bounded.
For $N\theta \ne 1$,
it gives precise asymptotics \eqref{rate of divergence} for the wage function
$v(k)$ and the endogenous distribution $\kappa^{ac}(k)$ of adult skills near $\bar k$.
Notice this formula makes an explicit quantitative prediction for the dependence of the
rate of divergence on the teaching capacity $N$ and effectiveness $\theta$ assumed
in the model.
In all cases this divergence is integrable, so the wages tend to a finite limit.
For $N\theta<1$ it predicts a specific limiting slope $v'(k) \to \cm/(\frac{1}{N\theta}-1)$
as $k \to \bar k$, while for $N\theta>1$ it predicts $v'(k) \to \infty$  at a specific rate.
Thus the differences in marginal wages amongst the very top echelons of teachers (`gurus')
is negligible
in a thin (or equivalently, vertical) pyramid $N\theta<1$, but becomes more and more exaggerated
if $N\theta>1$, and at a rate which increases with $N\theta$,
corresponding to a fatter and fatter (or equivalently, more and more
horizontal) organizational structure with wider effective span of control.
When the theorem applies, it also predicts that the density of adults (= teachers)
at the highest skill level $\bar k=\bar a$
tends to a constant multiple $\frac{1-\theta/N}{1-\theta}$ of the density of students.


\begin{theorem}[Wage behavior and density of top-skilled adults]
\label{T:phase transition}
Fix $\cm\ge0$ and positive $\theta,\theta', N,N'$ and $\bar a=\bar k$ 
with $\max\{\theta,\theta'\}<1\le N$.  
Let  $\alpha$ be given by a Borel probability density $\alpha^{ac} \in L^\infty(A)$ which is continuous
and positive at the upper endpoint of $A=[0, \bar a[$.
Set $z(a,k) =(1-\theta)a + \theta k$, $\bet = b_E\circ z$ and $\blt(a,k)=b_L((1-\theta')a+\theta'k)$,
where $b_{E/L} \in C^1(\bar K)$ satisfy \eqref{utility bound 0}--\eqref{utility bound 2}.
Suppose $(\epsilon,\lambda)$
and convex $(u,v)\in F_0$ optimize the primal and dual problems
\eqref{three line dual}--\eqref{three line primal},
and
(i) $\bar k  \in (\spt \epsilon^2) \setminus \spt(\lambda^1 + \lambda^2)$,
meaning all adults with sufficiently high skills 
become teachers;
(ii) the educational matching $\epsilon$ is positive assortative,
meaning a non-decreasing correspondence $k=k_t(a)$ relates the ability
of $\alpha$-a.e.\ student $a$ to that of his teacher;
(iii) $k_t$ is differentiable at $\bar a$,
and (iv) $v$ is differentiable on some interval $]\bar k-\delta,\bar k[$.
Then for $N\theta \ne 1$,
\begin{equation}\label{rate of divergence}
v'(k) = \frac{const}{|\bar k - k|^{\frac{\log N\theta}{\log N}}} -
\frac{\cm \bar b_E'}{1-\frac{1}{N\theta}} + o(1)
\end{equation}
as $k \to \bar k$, and the steady state
distribution $\kappa=z_\#\epsilon$ of adult skills satisfies
\begin{equation}\label{guru density}
 \kappa^{ac}\left(\bar k\right) := \lim_{\delta \to 0}
\frac{1}{\delta} \int_{\bar k-\delta}^{\bar k} \kappa(dk) =
\frac{1-\theta/N}{1-\theta} \alpha^{ac}(\bar a).
\end{equation}
\end{theorem}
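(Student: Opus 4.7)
The plan is to first pin down $k_t'(\bar a)$ together with the density formula \eqref{guru density}, and then bootstrap from there to the wage asymptotics via the recursive relation \eqref{iterate 0}.

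First I would exploit assumption (i) to note that in a neighbourhood $]\bar k-\eta,\bar k]$ of the top, $\kappa$ charges only teacher types, so $\kappa = \kappa_t = \epsilon^2/N$ there. Positive assortativity (ii) together with the representation $\epsilon = (\mathrm{id} \times k_t)_\# \alpha$ from Lemma \ref{L:adult tail bound} forces $k_t(\bar a) = \bar k$ (since $\bar a \in \spt \alpha$ and $\bar k \in \spt \epsilon^2$, and $z(\bar a,\bar k)=\bar k$ only if the top student is paired with the top teacher). From the push-forward identity $\alpha = k_t^{-1}{}_\# \epsilon^2$ I would then read off the density at the top as
\[
\kappa^{ac}(\bar k) \;=\; \frac{\alpha^{ac}(\bar a)}{N\, k_t'(\bar a)}.
\]
Combining this with the Monge--Amp\`ere identity \eqref{Monge-Ampere} evaluated at $a=\bar a$ (where $z(\bar a, k_t(\bar a))=\bar k$), namely $\alpha^{ac}(\bar a) = (1+\theta(k_t'(\bar a)-1))\kappa^{ac}(\bar k)$, gives a linear equation for $k_t'(\bar a)$ whose solution is $k_t'(\bar a) = (1-\theta)/(N-\theta)$. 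Substituting back yields \eqref{guru density}.

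Next I would turn to the wage gradient. Assumption (i) guarantees that for $k$ sufficiently close to $\bar k$, the student of ability $a=k_t^{-1}(k)$ taught by $k$ acquires adult skill $\phi(k) := z(k_t^{-1}(k),k) = (1-\theta)k_t^{-1}(k) + \theta k$ which is itself a teacher type. Using (iv) at $k$ and $\phi(k)$, the specialization proposition yields the recursion
\[
v'(k) \;=\; N\theta\bigl(c\, b_E'(\phi(k)) + v'(\phi(k))\bigr).
\]
From step one, $(k_t^{-1})'(\bar k) = (N-\theta)/(1-\theta)$, whence a direct computation shows $\phi'(\bar k) = N$ and in particular $\bar k - \phi(k) = N(\bar k - k) + o(\bar k-k)$.

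The final step is to extract the asymptotic formula from this functional equation. Let $L := c\bar b_E'/(1/(N\theta)-1)$ be the unique fixed point of the constant-coefficient version $L = N\theta(c\bar b_E' + L)$, and set $W(k) := v'(k) - L$. Subtracting the fixed-point relation gives
\[
W(k) \;=\; N\theta\, W(\phi(k)) + N\theta\, c\bigl(b_E'(\phi(k)) - \bar b_E'\bigr),
\]
where the inhomogeneous term is $O(\bar k - k)$ since $b_E \in C^1$. Passing to the logarithmic variable $t$ via $\bar k-k = e^{-t}$, the equation becomes $U(t) = N\theta\, U(t - \log N + o(1)) + O(e^{-t})$, whose homogeneous solution is $e^{\beta t}$ with $\beta = \log(N\theta)/\log N$. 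I would then prove $v'(k) - L \sim A\,(\bar k-k)^{-\beta}$ by iterating the recursion $n$ times starting from a base point $k_0$ near $\bar k$, rescaling by $(N\theta)^{-n}$, and using the monotonicity/convexity of $v'$ (from Lemma \ref{L:integral form}) together with the differentiability hypotheses (iii)--(iv) to get uniform control on the subleading corrections. The series $\sum (N\theta)^i (\bar k - \phi^i(k_0))$ telescopes geometrically with ratio $N\theta \cdot N = N^2\theta$ relative to the main term $(N\theta)^n (\bar k-k_0)^{-\beta} N^{-\beta n}$, and the invariance $N\theta\cdot N^{-\beta}=1$ makes the power law self-consistent while the perturbation is genuinely $o(1)$.

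The main obstacle is this last step: showing that the iteration of the inhomogeneous functional equation converges to a single power-law profile rather than to a family of log-periodic solutions permitted by the bare homogeneous equation $W(k)=N\theta W(\phi(k))$. Ruling out such oscillations relies crucially on (iii), which ensures that $\phi$ is genuinely $C^1$-conjugate to multiplication by $N$ near $\bar k$, and on the monotonicity of $v'$, which sandwiches the rescaled profiles between monotone limits that must therefore coincide.
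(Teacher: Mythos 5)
Your proposal is correct and follows essentially the same path as the paper's own proof: the two Monge--Amp\`ere-type density identities (the paper's \eqref{MAt} and \eqref{MAg}, which you package via \eqref{Monge-Ampere}) give the linear system fixing $k_t'(\bar a) = (1-\theta)/(N-\theta)$ and hence \eqref{guru density}; the first-order stability condition along the positive-assortative matching gives the same recursion $v'(k)=N\theta\bigl(c\,b_E'(\phi(k))+v'(\phi(k))\bigr)$ with $\phi'(\bar k)=N$ (the paper writes this with $k_g$ and $\Delta a$, but $\Delta k_1/\Delta k_0 = N$ is identical); and both arguments then solve the scale-invariant equation $f(x/N)=N\theta f(x)$ by the power $x^{-\log(N\theta)/\log N}$ after subtracting the fixed-point constant. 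Your concluding worry about log-periodic solutions of the homogeneous equation and the reliance on monotonicity of $v'$ to fix the multiplicative constant is exactly the caveat the paper raises and then candidly defers in the remarks following the theorem (where it says the result provides ``strong evidence'' rather than an airtight proof), so your proposal is no less rigorous than the original; note only that $b_E\in C^1$ gives the inhomogeneous term as $o(1)$ rather than the $O(\bar k - k)$ you claim, but this is the same implicit extra regularity the paper also uses when it invokes $b_E''(\bar k)$.
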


\begin{proof} 
As in Lemma \ref{L:adult tail bound},
hypothesis (ii) implies some non-decreasing function
$k_t:A \to K$ gives the equilibrium matching of students with teachers, so that
$k_g(a) = (1-\theta)a + \theta k_t(a)$ gives the matching of student ability with
human capital acquired when the student grows up.  Then $(k_g)_\#\alpha = \kappa$ and
$(k_t)_\# \alpha = N\kappa_t$,  where $\kappa= \kappa_m + \kappa_m/N' + \kappa_t/N$
gives the distribution of adult skill types on $K$, as a sum of the distributions
of worker, manager and teacher skill types.  Now
\begin{eqnarray}\label{MAt}
N k_t'(a) \kappa^{ac}_t(k_t(a)) &=& \alpha^{ac}(a), \\
{\rm and} \qquad k_g'(a) \kappa^{ac}(k_g(a)) &=& \alpha^{ac}(a)
\label{MAg}
\end{eqnarray}
is known to hold for a.e.\ $a\in\bar A$.  In particular, techniques of
\cite{McCann97} can be used to show it holds at $a=\bar a$ provided $k_t'(\bar a)$
(and hence $k_g'(\bar a)$) exists (iii) and are non-vanishing.
On the other hand, the upper bound $\|\kappa^{ac}\|_{L^\infty}<\infty$
from Lemma \ref{L:adult tail bound} gives a positive lower bound for $k_t'(a)$
near $\bar a$ a.e.\ in \eqref{MAt},
which precludes the possibility that $k_t'(\bar a)=0$.

From (i) and the steady state constraint
$\kappa = \lambda^1 + \frac1{N'}\lambda^2 + \frac1N \epsilon^2$ we have
$k_t(\bar a)=\bar k=k_g(\bar a)$ and
$\kappa^{ac}(\bar a) = \kappa^{ac}_t(\bar a)$.
From \eqref{MAt}--\eqref{MAg} we conclude $Nk_t'(\bar a) = k_g'(\bar a)$.
On the other hand,
differentiating $k_g(a) = (1-\theta)a+\theta k_t(a)$ yields
$k_g'(\bar a)=1+ \theta(k_t'(\bar a)-1)\ge 1-\theta$.
Solving this linear system of two equations in two unknowns
gives $k_t'(\bar a) = \frac{1-\theta}{N-\theta}$
and
\begin{equation}\label{kgp_max}
k_g'(\bar a) = \frac{1-\theta}{1-\theta/N}; 
\end{equation}
\eqref{MAg} now implies \eqref{guru density}.

Next we consider the equilibrium wage $v(k)$ of each type of adult
and payoff $u(a)$ to each type of student.
The stability constraint asserts
$u(a) + {\ts \frac{1}{N}} v(k) - v(z(a,k)) - \cm b_E(z(a,k)) \ge 0
$
for all $a$ and $k$,  with equality holding when $k=k_t(a)=\theta^{-1}k_g(a)+ (1-\frac1\theta)a$.
The first-order condition in $k$ for this non-negative function to attain its
minimum gives
$$
v'\left(\frac{k_g(a)-(1-\theta)a}{\theta} \right) = 
(v'(k_g(a)) + \cm b_E'(k_g(a))) N\theta.
$$
Taylor expanding $k_g(\bar a - \Delta a) = \bar k - k_g'(\bar a)\Delta a + o(\Delta a)$
using $k_g'(\bar a)$ from \eqref{kgp_max}, 
we find a recursive relation for $v'(k)$ near $\bar k$:
$$\ts v' \left( \bar k - \frac{1-\theta}{N-\theta} \Delta a+ o(\Delta a)\right)
= N\theta [v'+ \cm b_E']_{k=\bar k - \frac{1-\theta}{1-\theta/N} \Delta a+ o(\Delta a)}.
$$
Neglecting the $o(\Delta a)$ terms and setting
$\bar b_E' f(x):= v'(\bar k - x) / c+(1-\frac{1}{N\theta})^{-1}\bar b_E' - 
(1 - \frac{1}{N^2\theta})^{-1} b_E''(\bar k) x$,
the recursion simplifies to $f(\frac{x}{N}) = N\theta f(x)$ which is solved by
constant multiples of $f(x) = x^{- \log (N\theta)/ \log N}$.  Thus, to leading order
$$
v'(\bar k - \Delta k) 
= const |\Delta k|^{-\frac{\log N\theta}{\log N}} - \frac{c \bar b_E'}{1-\frac{1}{N\theta}} + \frac{cb_E''(\bar k)}{1-\frac{1}{N^2\theta}}\Delta k.
$$
Either the first or the second summand dominates this expression as $\Delta k \to 0$,
depending on the sign of $N\theta-1$.  One might worry that $const$ depends on the sequence 
along which the recursion is solved,  but for $N\theta>1$ the monotonicity
of $v'$ precludes this, 
to yield the desired identity \eqref{rate of divergence}.
\end{proof}

Some remarks 
concerning hypotheses (i)--(iv): 
Proposition \ref{P:specialization} ensures (i) holds if
$N'\theta'$ and $N\theta$ are large enough,  while Theorem \ref{T:unique}
ensures (ii) holds when $c>0$, and can be selected otherwise.
We do not know conditions which guarantee (iii)-(iv),
since differentiability may fail for $k_t(a)$ on a set of zero measure,
and for $v(k)$ at a countable number of points.  We can however,
ensure that $k_t$ is bi-Lipschitz by combining the lower bound on
its derivative from Lemma~\ref{L:adult tail bound} with the upper bound
provided by Proposition \ref{P:specialization} in case $N\theta \ge 1$.
This makes failure of (iii) seem unlikely,  since the value of
$k'_t(a)$ would have to oscillate between these positive bounds,
producing a reciprocal oscillation in $\kappa(k)$ near $\bar k$.
Similarly, the alternative to (iv) is that jump discontinuities
in the monotone function $v'(k)$ accumulate at $\bar k$.
At least one of the three types of singular behavior must occur,
and \eqref{rate of divergence} seems the most likely,
especially given its consistency with the divergence~\eqref{diverging sum}
predicted by Proposition \ref{P:specialization}.
To be absolutely correct, however,
one should say Theorem \ref{T:phase transition} provides strong evidence
in favor of a phase transition with wage gradients diverging if and only if $N\theta \ge 1$,
where the leading order behavior of \eqref{rate of divergence} changes.
The theorem also provides concrete quantitative predictions
which can be investigated numerically.


\appendix
\section{Optimal plans and absence of a duality gap}
\label{S:no duality gap}

This appendix establishes the existence of measures achieving
the maximum $LP^*(\delta)$ in the original \eqref{three line primal} and
$\delta$-perturbed dual problem \eqref{delta primal},  and verifies
the absence $LP^*(\delta) = LP_*(\delta)$ of a duality gap.
While such claims are natural analogs to duality results 
well-known in finite-dimensional linear programming,
in our infinite-dimensional context 
they will remain true only if we are careful to choose the correct functional
analytic setting.  These choices are made clear in the proofs of the following 
statements.


\begin{lemma}[Existence of optimal measures]\label{L:primal existence}
Fix $\delta,\cmd$ non-negative and $\theta,\theta',N,N' $ 
positive with 
$\max\{\theta,\theta'\}\le 1 \le N$
and $N \ge 1$.  Let  $\alpha$ be a Borel probability measure on $\bar A$,
where $A=[0, \bar a[=K$ with $0<\bar a=\bar k \in \spt \alpha$, and define
$z(a,k) =(1-\theta)a + \theta k$, $\bet=b_E \circ z$ and $\blt(a,k)=b_L((1-\theta')a+\theta'k)$,
where $b_{E/L} \in C^0(\bar K)$. 
%
Then there exist feasible measures $\epsilon_\delta \ge 0$ and $\lambda_\delta \ge 0$
on $\bar A^2$ maximizing the dual problem \eqref{delta primal}.
\end{lemma}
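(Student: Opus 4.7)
The plan is the standard direct method: exhibit a feasible pair to ensure the supremum is finite, bound the total mass of any feasible pair, extract a weak-$*$ limit along a maximizing sequence, and verify that both the constraints and the objective pass to the limit.

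First I would check nonemptiness of the feasible set. Let $\mu_\delta := \alpha + \tfrac{\delta}{|A|} H^1|_A$ and set $\epsilon_0 := (\mathrm{id} \times \mathrm{id})_\#\mu_\delta$, a diagonal measure supported on $\{(a,a):a \in \bar A\}$. Because $z(a,a)=a$, we have $\epsilon_0^1 = \epsilon_0^2 = z_\#\epsilon_0 = \mu_\delta$, so \eqref{delta student marginal} holds and \eqref{delta steady state} reduces to
\[
\lambda^1 + \tfrac{1}{N'}\lambda^2 = \bigl(1 - \tfrac1N\bigr)\mu_\delta + \tfrac{\delta}{|K|} H^1|_K =: \nu_\delta,
\]
where $\nu_\delta \ge 0$ since $N \ge 1$. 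Setting $\lambda_0 := \tfrac{N'}{N'+1}(\mathrm{id}\times\mathrm{id})_\#\nu_\delta$ gives $\lambda_0^1 = \lambda_0^2 = \tfrac{N'}{N'+1}\nu_\delta$ and hence a feasible pair $(\epsilon_0,\lambda_0)$.

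Next I would observe that feasibility determines the total masses. From \eqref{delta student marginal}, $\epsilon[\bar A\times \bar K] = \mu_\delta[\bar A] = 1+\delta$, and integrating \eqref{delta steady state} against the constant function $1$ yields
\[
\bigl(1 + \tfrac{1}{N'}\bigr)\lambda[\bar A\times \bar K] = \bigl(1 - \tfrac{1}{N}\bigr)(1+\delta) + \delta,
\]
so $\lambda$ also has a uniform a priori mass bound. Since $\bar A \times \bar K$ is a compact metric space, $C(\bar A\times \bar K)$ is separable, and the Banach--Alaoglu theorem together with metrizability of norm-bounded subsets in the weak-$*$ topology lets me extract, from any maximizing sequence $(\epsilon_i,\lambda_i)$, a subsequence converging weak-$*$ to non-negative Radon measures $(\epsilon_\delta,\lambda_\delta)$ on $\bar A \times \bar K$.

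To conclude, I would verify the constraints and objective survive passage to the limit. The marginal maps $\epsilon \mapsto \epsilon^j$ and the pushforward $\epsilon \mapsto z_\#\epsilon$ are weak-$*$ continuous into the space of Radon measures on $\bar A$ (or $\bar K$), because test functions $\varphi \in C(\bar A)$ pull back to continuous functions on $\bar A \times \bar K$ under the projections and under $z$ (which is continuous). Thus \eqref{delta student marginal}--\eqref{delta steady state} hold in the limit, so $(\epsilon_\delta,\lambda_\delta)$ is feasible. Finally, since $b_E \circ z$ and $b_L \circ z'$ are continuous on the compact set $\bar A \times \bar K$, the objective $\cmd \epsilon(b_E\circ z)+\lambda(b_L\circ z')$ is weak-$*$ continuous, so the limit attains $LP(\delta)^*$. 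The only mildly delicate point is the joint weak-$*$ compactness with metrizability, but given the compactness of $\bar A \times \bar K$ this is entirely standard; the argument is genuinely softer than the existence of optimal wages treated above, since here there is no recursive or unbounded structure to manage.
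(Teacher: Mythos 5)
Your proof is correct and takes essentially the same route as the paper: construct a feasible diagonal pair to show nonemptiness, then extract a weak-$*$ limit from a maximizing sequence via Banach--Alaoglu and observe that the linear constraints and objective are weak-$*$ continuous because they arise from pairing against continuous functions on the compact square $\bar A \times \bar K$. Your explicit mass bound and your remark on metrizability fill in a small step that the paper leaves implicit (it invokes compactness of \emph{probability} measures, whereas the feasible $\epsilon,\lambda$ have masses $1+\delta$ and the computed value), and your feasible $\lambda_0$, though written differently, simplifies to the same measure the paper constructs.
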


\begin{proof}
As we now describe,
existence of a maximizing $\epsilon$ and $\lambda$ follows from a standard compactness
and continuity argument.  The continuous functions $C(\bar A^2)$ on the compact
square $\bar A^2$ form a Banach space when equipped with the supremum norm $\|\cdot \|_\infty$.
Borel probability measures form a weak-$*$ compact subset of the dual Banach space,
according to the Riesz-Markov and Banach-Alaoglu theorems.  A sequence
$\epsilon_i \to \epsilon_\infty$ converges in the weak-$*$ topology if and only if the integral
$\epsilon_i(f)$ of each continuous function $f \in C(\bar A^2)$ against $\epsilon_i$ converges to
the integral of $f$ against $\epsilon_\infty$.  Feasibility of $\lambda,\epsilon \ge 0$
asserts
\begin{eqnarray*} 
\langle f\delta \rangle_A + \int_{\bar A} f(a) \alpha (da)
&=&\int_{\bar A \times \bar K} f(a) \epsilon(da,dk) \quad {\rm and}
\\
\int_{\bar K^2} [f(k') + \ts\frac{1}{N'}\ds f(k)] \lambda(dk',dk)
&=& 
\langle f\delta \rangle _K + \int_{\bar A \times \bar K} [f(z(a,k)) - \ts\frac{1}{N}\ds f(k)] \epsilon(da,dk)
\end{eqnarray*}
for each $f \in C(\bar A)$.  Thus the feasible pairs form a weak-$*$ compact
subset of $C(\bar A^2)^*$.  Since $b_\theta,b'_{\theta'} \in C(\bar A^2)$,  the linear
functional we are trying to maximize is weak-$*$ continuous,  hence its maximum
must be attained, provided
the set of feasible measures $(\epsilon,\lambda)$ is non-empty.
To see the feasible set is non-empty, let $\epsilon$ concentrate on
the diagonal: $\epsilon = (id \times id)_\# (\alpha + \frac{\delta}{|A|} H^1|_A)$.  Then the
marginals $\epsilon^1=\epsilon^2$ of $\epsilon$ coincide with
$\kappa := z_\#\epsilon = \alpha + \frac{\delta}{|A|} H^1|_A$,  since $z(a,a) =a$.  Choosing
$\lambda := \frac{1-1/N}{1+1/N'} \epsilon + \frac{1}{1+1/N'}(id \times id)_\# (\frac{\delta}{|A|} H^1|_A)$
defines a feasible pair.
\end{proof}

\medskip

The next theorem addresses the absence of a duality gap.  It is proved using
generalization of the Fenchel-Rockafellar duality theorem
found in Borwein and Zhu \cite{BorweinZhu05}
(and pointed out to us by Yann Brenier).
As in the preceding lemma,  the Fenchel-Rockafellar theorem
will involve the duality between measures and continuous, bounded functions.
On the other hand, $LP_*(\delta)$ is necessarily defined by an infimum
over a larger class of functions $F_\delta$ including some unbounded ones. 
Thus the Fenchel-Rockafellar theorem by itself yields only an inequality
$LP_*(\delta) \le LP^*(\delta)$ and not the desired equality.
Fortunately,  the complementary inequality
is established in Proposition \ref{P:easy duality}.


\begin{theorem}[No duality gap]
\label{T:no gap}
Fix $\delta,\cmd$ non-negative and $\theta,\theta',N,N'$ and $\bar a=\bar k$ positive with 
$\max\{\theta,\theta'\}\le 1 \le N$.
Let  $A=[0,\bar a[=K$ and
$\alpha$ be a Borel probability measure on $\bar A$
satisfying the doubling condition \eqref{doubling} at $\bar a$,
and define
$z(a,k) =(1-\theta)a + \theta k$, $\bet=b_E\circ z$ and $\blt(a,k)=b_L((1-\theta')a+\theta'k)$  where $b_{E/L} \in C(\bar K)$.
%
Then the values $LP^*(\delta)=LP_*(\delta)$ of the infimum
\eqref{delta dual} and supremum \eqref{delta primal} coincide.
\end{theorem}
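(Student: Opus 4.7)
The plan is to apply the generalized Fenchel--Rockafellar duality theorem from \cite{BorweinZhu05} to a version of the infimum \eqref{delta dual} restricted to bounded continuous pairs, and then to combine its conclusion with the complementary inequality $LP^*(\delta)\le LP_*(\delta)$ already established in Proposition \ref{P:easy duality}. I work in the Banach spaces $X := C(\bar A) \times C(\bar A)$ and $Y := C(\bar A \times \bar K) \times C(\bar K \times \bar K)$ equipped with the supremum norm, and define the continuous linear operator $T : X \longrightarrow Y$ componentwise by
$$T_1(u,v)(a,k) := u(a) + \frac{v(k)}{N} - v(z(a,k)), \qquad T_2(u,v)(k',k) := v(k') + \frac{v(k)}{N'},$$
with $h := (\cmd b_E\circ z,\, b_L \circ z') \in Y$ and $z'(k',k) := (1-\theta')k' + \theta' k$, so the stability constraints \eqref{stability education}--\eqref{stability labor} become $T(u,v) \ge h$. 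Setting $f(u,v) := \alpha(u) + \delta \langle u+v\rangle_A$ for the continuous linear objective and $g := \iota_{h + P}$ for the convex indicator of $h$ shifted by the cone $P \subset Y$ of non-negative functions, the quantity
$$\widetilde{LP}_*(\delta) := \inf_{(u,v) \in X} \bigl[ f(u,v) + g(T(u,v)) \bigr]$$
coincides with the value of \eqref{delta dual} over bounded continuous feasible pairs.

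Next I would verify a Slater-type qualification of the form $T(X) \cap (h + \mathrm{int}\, P) \ne \emptyset$, which is enough to apply Borwein and Zhu's theorem. Choosing $v_0 \equiv C$ with $C > \bar b_L / (1 + 1/N')$ and then $u_0 \equiv \cmd \bar b_E + C(1 - 1/N) + 1$, direct substitution shows $T(u_0,v_0) - h \ge 1$ in each component, which certainly lies in $\mathrm{int}\, P$. The theorem then yields
$$\widetilde{LP}_*(\delta) = \max_{y^* \in Y^*} \bigl[ -f^*(T^* y^*) - g^*(-y^*) \bigr].$$
Identifying $Y^*$ with pairs of Radon measures $(\epsilon,\lambda)$ via Riesz--Markov, an elementary computation gives $-g^*(-y^*) = \cmd \epsilon(\bet) + \lambda(\blt)$ when $\epsilon,\lambda \ge 0$ and $-\infty$ otherwise, while $-f^*(T^*y^*) = 0$ precisely when $T^*(\epsilon,\lambda) = f$ as functionals on $X$.

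The key computation is then identifying the adjoint constraint $T^*(\epsilon,\lambda) = f$ with feasibility for \eqref{delta primal}. Pairing $T(u,v)$ against $(\epsilon,\lambda)$ and grouping $u$- and $v$-terms separately yields
$$\langle T(u,v),(\epsilon,\lambda)\rangle = \int_{\bar A} u\, d\epsilon^1 + \int_{\bar K} v\, d\!\left[ \frac{\epsilon^2}{N} + \lambda^1 + \frac{\lambda^2}{N'} - z_\#\epsilon \right],$$
while $f(u,v) = \int u\,d(\alpha + \tfrac{\delta}{|A|} H^1|_A) + \int v\,d(\tfrac{\delta}{|K|} H^1|_K)$. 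Matching the coefficients against $u$ and against $v$ recovers exactly the marginal condition \eqref{delta student marginal} and the steady-state condition \eqref{delta steady state}, so the Fenchel--Rockafellar dual value equals $LP^*(\delta)$, giving $\widetilde{LP}_*(\delta) = LP^*(\delta)$.

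Finally, every bounded continuous pair $(u,v) \in X$ satisfying the stability constraints lies in $F_\delta$ via the trivial decomposition $u_1 = v_1 = 0$, so $LP_*(\delta) \le \widetilde{LP}_*(\delta) = LP^*(\delta)$; combining with $LP^*(\delta) \le LP_*(\delta)$ from Proposition \ref{P:easy duality} completes the argument. The main obstacle is the third step --- the adjoint bookkeeping required to verify that $T^*(\epsilon,\lambda) = f$ reproduces \eqref{delta steady state} precisely, with the teacher marginal $\tfrac{1}{N}\epsilon^2$ and the push-forward $z_\#\epsilon$ both arising from the single recursive term $\tfrac{1}{N}v(k) - v(z(a,k))$ built into $T_1$. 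A more conceptual subtlety is that Fenchel--Rockafellar in this functional-analytic setting only controls the infimum over $C(\bar A)^2$, and it is precisely to close the potential gap between that restricted infimum and the full $LP_*(\delta)$ over the larger class $F_\delta$ (which may contain unbounded functions) that the complementary bound of Proposition \ref{P:easy duality} is indispensable.
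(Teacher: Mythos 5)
Your proposal is correct and follows essentially the same path as the paper's own proof: apply the Borwein--Zhu Fenchel--Rockafellar theorem to the infimum restricted to bounded continuous pairs, identify the adjoint constraints with \eqref{delta student marginal}--\eqref{delta steady state} via Riesz--Markov, and close the gap between that restricted infimum and $LP_*(\delta)$ using Proposition \ref{P:easy duality}. Your explicit Slater check (modulo taking $C$ a shade larger to literally get $\ge 1$ in the second component, though strict positivity is all that is needed) merely unpacks the paper's remark that constant $u,v$ large enough make $\phi\circ H$ finite and $\varphi$ continuous.
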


\begin{proof}
Let $H:Z \longrightarrow Z^*$ be a bounded linear transformation
between a Banach space $Z$ and its dual $Z^*$,  on which
convex functions
$\varphi:Z \longrightarrow \R \cup \{+\infty\}$ and
$ \phi:Z^* \longrightarrow \R \cup \{+\infty\}$ are defined.
Let $\dom \varphi := \{ z\in Z \mid \varphi(z)<\infty\}$.
Define the Legendre transform $\phi^*$ of $\phi$ by
\begin{equation}\label{Fenchel-Rockafellar}
\phi^*(z) := \sup_{z^* \in Z^*} \langle z,z^* \rangle - \phi(z^*)
\end{equation}
on $z \in Z$ and analogously $\varphi^*$ on $Z^*$.  Here $\langle z,z^*\rangle$
denotes the duality pairing.
If $\phi$ is continuous and real-valued at some point in $H(\dom\varphi)$,
then pp. 135-137 of \cite{BorweinZhu05} asserts
$$
\inf_{z \in Z} \varphi(z) + \phi(Hz) = \max_{z^* \in Z^*} - \varphi^*(H^*z^*) - \phi^*(-z^*).
$$

In our case
$$\varphi_\delta(u,v) 
=
\delta \langle u + v\rangle_A + \int_{[0,\bar a]} u(a) \alpha(da)$$
so
$$
\varphi_\delta^*(\mu,\nu) 
= \left\{
  \begin{matrix} 0 & {\rm if}\ (\mu,\nu)
  =(\alpha + \frac{\delta}{|A|}H^1|_A ,\frac{\delta}{|K|}H^1|_K) \\
                +\infty & {\rm else,}
  \end{matrix}\right.
$$
while
$$\phi(\tilde u,\tilde v) =
\left\{
  \begin{matrix} 0 & {\rm if}\ \tilde u \ge \cmd \bet\ {\rm and}\ \tilde v \ge \blt \\
                +\infty & {\rm else;}
  \end{matrix}\right.
$$
so
$$\phi^*(\epsilon,\lambda) =
\left\{
  \begin{matrix}
     \ts \cmd \epsilon (\bet) + \ds \lambda(\blt) & {\rm if}\ \epsilon \le 0\
      {\rm and}\ \lambda \le 0
   \\ +\infty & {\rm else;}
  \end{matrix}\right.
$$
and $H:C(\bar A)\oplus C(\bar K) \longrightarrow C(\bar A \times \bar K) \oplus C(\bar K\times \bar K)$
is given by
$$H \left(\begin{matrix} u \\ v \end{matrix}\right)
= \left( \begin{matrix}
u(a)+\frac{1}{N}v(k) - v(z(a,k)) \\
v(k') + \frac{1}{N'} v(k)
\end{matrix}\right)
,$$
so that
$$H^* \left(\begin{matrix} \epsilon \\ \lambda \end{matrix}\right) =
\left(\begin{matrix} \epsilon^{1} \\
\lambda^{1} +\frac{1}{N'}\lambda^{2} + \frac1N \epsilon^{2} - z_\# \epsilon
\end{matrix}\right).
$$
Notice $\varphi$ is continuous,  while taking $u,v$ large and constant makes
$\phi \circ H$ finite.  With these definitions \eqref{Fenchel-Rockafellar}
therefore asserts:
\begin{eqnarray*}
LP_*(\delta)
&\le& \inf_{u \in C(\bar A) \atop v \in C(\bar K)} \varphi_\delta(u,v) + \phi(H(u,v))
\\ &=& \max_{\epsilon \ge 0\ {\rm on}\ \bar A \times \bar K\atop
         \lambda \ge 0\ {\rm on}\ \bar K \times \bar K}
- \varphi_\delta^*(H^*(\epsilon,\lambda)) - \phi^*(-\epsilon,-\lambda)
\\ &=& LP^*(\delta).
\end{eqnarray*}
Here we have an inequality rather than the desired equality because
the definition of $LP_*(\delta)$ involves minimizing over a
broader class of feasible functions 
\eqref{delta dual} which need neither be continuous nor bounded.
For such functions however, Proposition \ref{P:easy duality}
asserts the opposite inequality, to conclude the proof of the theorem.
\end{proof}


\begin{thebibliography}{10}

\bibitem{AhmadKimMcCann11}
N.~{Ahmad, H.K. Kim, and R.J. McCann}.
\newblock Optimal transportation, topology and uniqueness.
\newblock {\em Bull. Math. Sci.} {\bf 1} (2011) 13--32.

\bibitem{BeckerMurphy92}
G.S. Becker and K.M. Murphy.
\newblock The division of labor, coordination costs, and knowledge.
\newblock {\em Quart. J. Econom.} {\bf 107} (1992) 1137--1160.

\bibitem{BorweinZhu05}
J.M. Borwein and Q.J. Zhu.
\newblock {\em Techniques of variational analysis}.
\newblock CMS Books in Mathematics, 20. Springer-Verlag, New York, 2005.

\bibitem{ChiapporiMcCannNesheim10}
P.-A. {Chiappori, R.J. McCann, and L. Nesheim}.
\newblock Hedonic price equilibria, stable matching and optimal transport:
  equivalence, topology and uniqueness.
\newblock {\em Econom. Theory} {\bf 42} (2010) 317--354.

\bibitem{Ekeland10}
I.~Ekeland.
\newblock Existence, uniqueness and efficiency of equilibrium in hedonic
  markets with multidimensional types.
\newblock {\em Econom. Theory} {\bf 42} (2010) 275--315.

\bibitem{GabaixLandier08}
X.~Gabaix and A.~Landier.
\newblock {Why has CEO compensation increased so much?}
\newblock {\em Quart. J. Econom.} {\bf 123} (2008) 49--100.

\bibitem{GangboMcCann96}
W.~Gangbo and R.J. McCann.
\newblock The geometry of optimal transportation.
\newblock {\em Acta Math.} {\bf 177} (1996) 113--161.

\bibitem{Garicano00}
L.~Garicano.
\newblock Hierarchies and the organization of knowledge in production.
\newblock {\em J. Political Econom.} {\bf 108} (2000) 874--904.

\bibitem{GaricanoRossiHansberg06}
L.~Garicano and E.~Rossi-Hansberg.
\newblock Organization and inequality in a knowledge economy.
\newblock {\em Quarterly J. Econom.} {\bf 121} (2006) 1383--1435.

\bibitem{GretskyOstroyZame92}
N.E. Gretsky, J.M. Ostroy, and W.R. Zame.
\newblock The nonatomic assignment model.
\newblock {\em Econom. Theory} {\bf 2} (1992) 103--127.

\bibitem{LiebLoss97}
E.H. Lieb and M.~Loss.
\newblock {\em Analysis}.
\newblock American Mathematical Society, Providence, 1997.

\bibitem{Lucas78}
R.E. Lucas, Jr.
\newblock On the size distribution of business firms.
\newblock The Bell Journal of Economics {\bf 9} (1978) 508--523.

\bibitem{McCann95}
R.J. McCann.
\newblock Existence and uniqueness of monotone measure-preserving maps.
\newblock {\em Duke Math. J.} {\bf 80} (1995) 309--323.

\bibitem{McCann97}
R.J. McCann.
\newblock A convexity principle for interacting gases.
\newblock {\em Adv. Math.} {\bf 128} (1997) 153--179.

\bibitem{McCann14p}
R.J. McCann.
\newblock Academic wages, singularities, phase transitions and pyramid schemes.
\newblock {\em Proceedings of the International Congress of
  Mathematicians (Seoul 2014)}, vol III Invited Lectures. S.Y.~Jang et al, eds. Kyung Moon SA, Seoul (2014) 835--849.

\bibitem{McCannShiSiowWolthoff13p}
R.J. McCann, X.~Shi, A.~Siow, and R.~Wolthoff.
\newblock {The organization of the labor market with communication and
  cognitive skills}.
\newblock To appear in{\em J. Law, Econom. Organization} (2015) 

\bibitem{Rosen81}
S.~Rosen.
\newblock The economics of superstars.
\newblock {\em American Economic Review} {\bf 71} (1981) 845--858.

\bibitem{ShapleyShubik72}
L.S. Shapley and M.~Shubik.
\newblock {The assignment game I: The core}.
\newblock {\em Internat. J. Game Theory}, {\bf 1} (1972) 111--130.

\end{thebibliography}

\end{document}